\numberwithin{equation}{section}
\newtheorem{theorem}{Theorem}[section]
\newtheorem{cor}[theorem]{Corollary}
\newtheorem{lemma}[theorem]{Lemma}
\newtheorem{prop}[theorem]{Proposition}
\theoremstyle{definition}
\newtheorem{definition}[theorem]{Definition}
\theoremstyle{remark}
\newtheorem{remark}[theorem]{Remark}
\newcommand{\defcal}[1]{\expandafter\newcommand\csname 
	c#1\endcsname{{\mathcal{#1}}}}
\newcommand{\defbb}[1]{\expandafter\newcommand\csname 
	b#1\endcsname{{\mathds{#1}}}}
\newcommand{\defbf}[1]{\expandafter\newcommand\csname 
	bf#1\endcsname{{\mathbf{#1}}}}
\newcommand{\deffr}[1]{\expandafter\newcommand\csname 
	frak#1\endcsname{{\mathfrak{#1}}}}
\newcommand{\defov}[1]{\expandafter\newcommand\csname 
	ov#1\endcsname{{\overline{#1}}}}
\newcommand{\deftil}[1]{\expandafter\newcommand\csname 
	til#1\endcsname{{\widetilde{#1}}}}
\newcommand{\defhat}[1]{\expandafter\newcommand\csname 
	hat#1\endcsname{{\widehat{#1}}}}
\newcommand{\defscr}[1]{\expandafter\newcommand\csname 
	scr#1\endcsname{{\mathscr{#1}}}}
\newcommand{\deftt}[1]{\expandafter\newcommand\csname 
	tt#1\endcsname{{\mathtt{#1}}}}
\newcommand{\deful}[1]{\expandafter\newcommand\csname 
	ul#1\endcsname{{\underline{#1}}}}
\newcommand{\defas}[1]{\expandafter\newcommand\csname 
	'#1\endcsname{{\'{#1}}}}
\newcommand{\defde}[1]{\expandafter\newcommand\csname 
	`#1\endcsname{{\`{#1}}}}
\newcommand{\defpl}[1]{\expandafter\newcommand\csname 
	^#1\endcsname{{\^{#1}}}}
\newcommand{\defrm}[1]{\expandafter\newcommand\csname 
	rm#1\endcsname{{\mathsf{#1}}}}
 \newcommand{\defdot}[1]{\expandafter\newcommand\csname 
	Dot#1\endcsname{{\Dot{#1}}}}
\newcounter{calBbCounter}
	\edef\letter{\Alph{calBbCounter}}
	\edef\letter{\alph{calBbCounter}}
\DeclareMathOperator{\rhs}{RHS}
\DeclareMathOperator{\lhs}{LHS}
\DeclareMathOperator{\dist}{dist}
\DeclareMathOperator{\Div}{div}
\DeclareMathOperator{\supp}{supp}
\DeclareMathOperator{\loc}{\mathsf{loc}}
\DeclareMathOperator{\sing}{sing}
\newcommand{\Cc}{C^\infty_{\mathsf{c}}}
\newcommand{\esssup}[1]{\underset{#1}{\mathrm{esssup}}}
\DeclareMathOperator{\weakstar}{weak^\ast}
\DeclareMathOperator{\mat}{Mat}
\newcommand{\I}{\mathds{1}}
\DeclareMathOperator{\sk}{SK}
\DeclareMathOperator{\sio}{SIO}
\DeclareMathOperator{\RH}{RH}
\title[Well-posedness and maximal regularity on tent spaces]{On well-posedness and maximal regularity for parabolic Cauchy problems on weighted tent spaces}
\author{Pascal Auscher}
\address{Universit{\'e} Paris-Saclay, CNRS, Laboratoire de Math\'{e}matiques d'Orsay, 91405 Orsay, France}
\email{pascal.auscher@universite-paris-saclay.fr}
\author{Hedong Hou}
\address{Universit{\'e} Paris-Saclay, CNRS, Laboratoire de Math\'{e}matiques d'Orsay, 91405 Orsay, France}
\email{hedong.hou@universite-paris-saclay.fr}
\date{July 29, 2024}
\keywords{Parabolic Cauchy problems, maximal regularity, tent spaces, singular integral operators, off-diagonal estimates}
\subjclass{Primary 35K45; %Initial value problems for second-order parabolic systems 
Secondary 42B37} %Harmonic analysis and PDEs
\begin{document}

%%%%% Abstract
\begin{abstract}
   We prove well-posedness in weighted tent spaces of weak solutions to the Cauchy problem $\partial_t u - \Div A \nabla u = f, u(0)=0$, where the source $f$ also lies in (different) weighted tent spaces, provided the complex coefficient matrix $A$ is bounded, measurable, time-independent, and uniformly elliptic. To achieve this,  we extend the theory of singular integral operators on tent spaces via off-diagonal estimates introduced by \cite{auscher12maxreg} to obtain estimates on solutions $u$, and also $\nabla u$, $\partial_t u$, and $\Div A \nabla u$ in weighted tent spaces, showing at the same time maximal regularity. Uniqueness follows from a different strategy using interior representation for weak solutions and boundary behavior. 
\end{abstract}
\maketitle
\tableofcontents
%%%% Section: Introduction
\section{Introduction}
\label{sec:intro}

The main goal of this work is to study some parabolic equations in weighted tent spaces. We prove the following well-posedness and maximal regularity result.

\begin{theorem}
    \label{thm:main} 
    Let $A \in L^\infty(\bR^n;\mat_n(\bC))$ be uniformly elliptic.
    Let $\beta>-1/2$ and $p_L(\beta)<p \leq \infty$. For any $f \in T^{p}_\beta$, there is a unique global weak solution $u \in T^{p}_{\beta+1}$ to the equation
    \begin{equation}
        \label{e:ic}
        \partial_t u(t,x) - \Div_x(A(x) \nabla_x u)(t,x) = f(t,x), \quad (t,x) \in (0,\infty) \times \bR^n,
    \end{equation} 
     Moreover, it satisfies the estimate
    \[ \|u\|_{T^p_{\beta+1}} + \|\nabla u\|_{T^p_{\beta+{1}/{2}}} \lesssim \|f\|_{T^p_\beta}, \]
    and both of $\partial_t u$ and $\Div(A\nabla u)$ belong to $T^{p}_\beta$ with estimate
    \[ \|\partial_t u\|_{T^p_\beta} + \|\Div(A\nabla u)\|_{T^p_\beta} \lesssim \|f\|_{T^p_\beta}.  \]
\end{theorem}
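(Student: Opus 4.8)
The plan is to solve \eqref{e:ic} by the variation-of-constants (Duhamel) formula and to recognize the resulting solution operator, together with the maps producing $\nabla u$, $\partial_t u$ and $\Div(A\nabla u)$, as singular integral operators acting on weighted tent spaces, whose boundedness is delivered by the off-diagonal machinery extending \cite{auscher12maxreg}. Writing $L = -\Div A\nabla$ for the m-accretive elliptic operator on $L^2(\bR^n)$ generating the bounded analytic semigroup $(e^{-tL})_{t>0}$, the candidate solution is
\[
u(t) = \cS f(t) := \int_0^t e^{-(t-s)L} f(s)\, ds,
\]
which manifestly carries the initial condition $u(0)=0$. Formally $\nabla u(t) = \int_0^t \nabla e^{-(t-s)L} f(s)\, ds$, while $\Div(A\nabla u) = -Lu$ and $\partial_t u = f - Lu$ with
\[
Lu(t) = \int_0^t L e^{-(t-s)L} f(s)\, ds,
\]
so all four quantities are integral operators against explicit operator-valued kernels.

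First I would record the Gaffney--Davies off-diagonal estimates for the relevant functional calculus operators: for Borel sets $E,F\subseteq\bR^n$, integers $m\ge 0$, and $\tau>0$,
\[
\bigl\|\I_F\, (\tau L)^m e^{-\tau L}\, \I_E\bigr\|_{2\to 2} + \bigl\|\I_F\, \sqrt{\tau}\,\nabla e^{-\tau L}\, \I_E\bigr\|_{2\to 2} \lesssim e^{-c\,\dist(E,F)^2/\tau},
\]
encoding the parabolic scaling $\tau\sim\dist(E,F)^2$. Setting $\tau = t-s$, these translate into size-and-decay bounds for the kernels $e^{-(t-s)L}$, $\nabla e^{-(t-s)L}$ and $Le^{-(t-s)L}$, with diagonal singularities of order $(t-s)^{0}$, $(t-s)^{-1/2}$ and $(t-s)^{-1}$ respectively. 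The matching of these homogeneities with the respective weight shifts $\beta\mapsto\beta+1$, $\beta\mapsto\beta+1/2$ and $\beta\mapsto\beta$ is precisely what makes each map a bounded singular integral operator of the type treated by the weighted tent-space theory.

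Next I would verify the hypotheses of the weighted tent-space SIO theorem for each kernel separately and invoke it to obtain the stated estimates on the range $p_L(\beta)<p\le\infty$. For $\cS$ and $\nabla\cS$ the kernels are integrable across the diagonal after accounting for the weight, so only off-diagonal size bounds together with the $T^2$ endpoint — obtained from quadratic/square-function estimates for the semigroup — are needed. The delicate case is $Lu$, equivalently $\partial_t u$ and $\Div(A\nabla u)$: here the kernel $Le^{-(t-s)L}$ carries the non-integrable singularity $(t-s)^{-1}$, so boundedness is a genuine maximal-regularity statement and one must exploit the cancellation encoded by $\int_0^\infty Le^{-\tau L}\,d\tau = I$ to handle the principal-value structure in the $t$-variable within the tent-space framework. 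I expect this to be the main obstacle, as it combines the Calder\'on--Zygmund singularity in time with operator-valued off-diagonal decay in space, and it is where the threshold $p_L(\beta)$ — governed by the $L^p$-range of the Riesz transform and semigroup for $L$ — enters.

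Finally, with existence and the full set of estimates in hand, I would treat uniqueness separately. Given two $T^p_{\beta+1}$-solutions, their difference $w$ is a weak solution of the homogeneous problem $\partial_t w + Lw = 0$ lying in $T^p_{\beta+1}$. I would establish an interior representation $w(t) = e^{-(t-s)L}w(s)$ valid for $0<s<t$ for such weak solutions, and then analyze the boundary behavior as $s\to 0^{+}$: membership in $T^p_{\beta+1}$ should force the initial trace to vanish in the appropriate averaged sense, and propagating this through the semigroup identity yields $w\equiv 0$. The subtle point here is extracting a usable notion of initial trace purely from tent-space control, and ensuring the representation applies to \emph{every} weak solution in the class rather than only to those produced by $\cS$.
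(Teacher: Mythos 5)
Your plan follows essentially the same route as the paper: the Duhamel operator and its gradient are treated as singular integral operators with kernels $e^{-(t-s)L}$, $\nabla e^{-(t-s)L}$, $Le^{-(t-s)L}$ whose off-diagonal decay drives the weighted tent-space extension on the range $p_L(\beta)<p\le\infty$, and uniqueness is obtained from the interior representation $w(t)=e^{-(t-s)L}w(s)$ combined with vanishing boundary behavior as $s\to 0$. The only cosmetic difference is that the paper obtains the delicate endpoint for the maximal regularity operator by quoting de Simon's theorem on $L^2$ and its weighted $L^2_\beta$ analogue rather than by an explicit cancellation argument, but this is the same mechanism packaged differently.
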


The definition of weak solutions is recalled in Section \ref{sec:inhomo} and that of the weighted tent spaces $T^p_\beta$ in Section \ref{ssec:tent}. For the time being, it suffices to know that  $p$ is an integrability index and $s:=2\beta+1$ is a regularity index. The number $p_L(\beta)$ will be given later and depends on the elliptic operator $L:=-\Div(A\nabla)$. We mention that this number is sharp in the case of the heat equation, with value $\frac {n}{n+2\beta+2}$.

Let us first clarify the absence of an initial condition in the statement. Actually, when $\beta>-1/2$ and $0<p\le \infty$, we shall show that all $T^p_{\beta+1}$-functions  have zero Whitney trace. This notion of trace,  which differs from the usual one taking completion from the usual restriction of test functions,  is a sort of non-tangential convergence at $t=0$: it consists in taking the limit of averages on parabolic Whitney cubes as $t \to 0$ for a.e. $x \in \bR^n$, see Theorem \ref{thm:ic_sol} for a precise description.  In particular, the solution in the statement must have zero Whitney trace. It also implies that there is no non-trivial global weak solution in $T^{p}_{\beta+1}$ to $\partial_tu - \Div(A\nabla u)=0$, and hence the initial value problem with non-zero datum cannot be posed in $T^p_{\beta+1}$ when $\beta>-1/2$. \\

That the initial condition must be zero suggests that one may solve the equation only using appropriate estimates for the Duhamel operator
\begin{equation}
    \cL_{1}(f)(t) := \int_0^t e^{-(t-s)L} f(s) ds,
    \label{e:formal_sol}
\end{equation}
its gradient $\nabla \cL_{1}$, and the maximal regularity operator
\begin{equation}
    \cM_L(f)(t) := \int_0^t Le^{-(t-s)L} f(s) ds.
    \label{e:def_M_L}
\end{equation}
Here, the semigroup is defined on $L^2(\bR^n)$ when realizing $L$ as a maximal accretive operator by Kato's theory. Note that the operator norms of the first two integrands in $L^2(\bR^n)$ have integrable singularities, while that of the maximal regularity operator is on the order of $|t-s|^{-1}$. Still, it is bounded on $L^2((0,\infty); L^2(\bR^n))$ by de Simon's theorem \cite{deSimon1964MRO_L2}. This corresponds here to the tent space $T^2_{0}$. Classical maximal regularity theory (\textit{i.e.}, boundedness of $\cM_L$ on $L^q((0,\infty); X)$ for $1<q<\infty$) also applies, provided the semigroup has the $R$-boundedness property and that $X$ is a UMD  space, see \cite{Weis2001LpMaxReg}. It produces what is usually called unique mild solutions to the abstract Cauchy problem
\begin{equation}
    \begin{cases}
        \partial_t u(t) + Lu(t) = f(t)  \\
        u(0)=0
    \end{cases}.
    \label{e:abstract-Cauchy}
\end{equation}
In the case of $L=-\Div(A\nabla)$,  $R$-boundedness on $X=L^p(\bR^n)$ only holds in a range of $p$ which can be limited \cite{Blunck-Kunstmann2003_CZNonIntegral}. Indeed, it is the interval $(p_-(L),p_{+}(L))$  introduced by \cite{Auscher2007Memoire} as the largest open interval  in $(1,\infty)$ so that the semigroup $(e^{-tL})_{t \ge 0}$, initially defined on $L^2(\bR^n)$, is uniformly bounded on $L^p(\bR^n)$. For negative Laplacian $-\Delta$, $p_-(-\Delta)=1$, $p_{+}(-\Delta)=\infty$. Examples show that $\frac{2n}{n\mp2}\mp \varepsilon$ (when $n\ge 3$) are best possible in full generality. 

The reader may wonder why we attempt to use tent spaces. The interest is that their norms first use local $L^2$ integrability in the upper half-space no matter what $p$ and $\beta$ are. Thus, we can be dispensed with checking $R$-boundedness and work with general non-smooth coefficients. To our knowledge, the first use of tent spaces (in the form of Carleson measures) to solve evolution equations is due to \cite{Koch-Tataru2001NS} on Navier--Stokes equations. See also a different proof in \cite{Auscher-Frey2017NS} and more recently \cite{Danchin-Vasilyev2023_Inhomogeneous-NS-tent} for inhomogeneous incompressible Navier--Stokes equations.

Getting away from the Laplacian, a series of pioneering works of the first author \textit{et al.} \cite{Auscher-Monniaux-Portal2012_MR,auscher12maxreg} proves boundedness of maximal regularity operators associated with such $L$ on a family of weighted tent spaces $T^{p}_\beta$, completely avoiding the notion of $R$-boundedness. Here we prove the following result. 
\begin{prop}
    \label{prop:ML_ext_tent}
    Let $\beta>-1/2$ and $p_L(\beta) < p \leq \infty$. Then, $\cM_L$ can be extended to a bounded operator on $T^{p}_\beta$. In addition, $\cL_{1}$ is bounded from $T^{p}_\beta$ to $T^{p}_{\beta+1}$ and $\nabla \cL_{1}$ is bounded from $T^{p}_\beta$ to $T^{p}_{\beta+1/2}$.
\end{prop}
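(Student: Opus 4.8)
The plan is to recognize the three operators $\cL_1$, $\nabla\cL_1$, and $\cM_L$ as causal (Volterra-type) singular integral operators acting on $L^2(\bR^n)$-valued functions on the time half-line $(0,\infty)$, with operator-valued kernels built from the holomorphic semigroup generated by $L$. Writing $t>s$, these kernels are $e^{-(t-s)L}$, $\nabla e^{-(t-s)L}$, and $Le^{-(t-s)L}$, whose $L^2(\bR^n)$-operator norms have singularities at $t=s$ of order $(t-s)^0$, $(t-s)^{-1/2}$, and $(t-s)^{-1}$ respectively; only the last is non-integrable, and there only the base estimate on $T^2_0 = L^2((0,\infty);L^2(\bR^n))$ is available directly, via de Simon's theorem \cite{deSimon1964MRO_L2}. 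For the remaining tent spaces I would invoke the singular integral operator machinery on tent spaces of \cite{Auscher-Monniaux-Portal2012_MR,auscher12maxreg}, extended to the full weighted scale $T^p_\beta$ and to the endpoint $p=\infty$, establishing the a priori bound on a dense subclass (finite combinations of atoms, or $T^2_0\cap T^p_\beta$) and extending by density.

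The first substantive step is to record the $L^2$ off-diagonal (Davies--Gaffney) estimates for the semigroup and its derivatives: for Borel sets $E,F\subset\bR^n$ with $d=\dist(E,F)$ and $f$ supported in $F$,
\[ \|e^{-tL}f\|_{L^2(E)} + \|\sqrt{t}\,\nabla e^{-tL}f\|_{L^2(E)} + \|tLe^{-tL}f\|_{L^2(E)} \lesssim e^{-cd^2/t}\|f\|_{L^2(F)}. \]
These follow from uniform ellipticity and accretivity of $L$ and encode a finite-speed-of-propagation heuristic that is exactly why local $L^2$ integrability is the natural framework; the exponential decay $e^{-cd^2/t}$ in the spatial separation, matched against the parabolic Whitney scaling $t\sim|x|^2$ built into $T^p_\beta$, is the precise input the tent-space SIO theorem consumes, with no $R$-boundedness needed.

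With these in hand, I would test each operator on atoms of $T^p_\beta$, which are supported in tents $T(B)$ over balls $B\subset\bR^n$ and normalized by the weight. For an atom $a$ over a ball of radius $r$, I split the image (say $\cM_L a$) into its part over the Whitney region near $T(B)$ and its parts over the annuli $2^{k+1}B\setminus 2^k B$: on the near part one uses the appropriate $L^2$-in-time base estimate (de Simon for $\cM_L$, the integrable-kernel bound for $\cL_1$ and $\nabla\cL_1$) together with the atom normalization, while on the far parts the factor $e^{-c(2^kr)^2/t}$ supplies decay. Summing the annular contributions against the weight $t^{-\beta}$ and the parabolic scaling reproduces a molecular bound, hence $T^p_\beta$-boundedness. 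The homogeneity bookkeeping is clean: the integrable time factors and the gradient off-diagonal estimate produce the gains $\cL_1\colon T^p_\beta\to T^p_{\beta+1}$ and $\nabla\cL_1\colon T^p_\beta\to T^p_{\beta+1/2}$, while $\cM_L$ maps $T^p_\beta$ to itself.

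The main obstacle will be pinning down the sharp lower endpoint $p_L(\beta)$ and the endpoint $p=\infty$. For general $L$ the raw $L^2$ off-diagonal estimates do not reach the sharp threshold, since the semigroup is only $L^p$-bounded on $(p_-(L),p_+(L))$ \cite{Auscher2007Memoire}; one must upgrade to $L^{p_0}$--$L^2$ off-diagonal estimates with $p_0$ near $p_-(L)$, so that the annular tails are summed at the correct integrability, which is exactly what fixes $p_L(\beta)$ and makes it coincide with $\frac{n}{n+2\beta+2}$ in the heat case where $p_-(-\Delta)=1$. The endpoint $p=\infty$ corresponds to a Carleson-measure condition and is most cleanly obtained by duality against a $p=1$ tent space together with interpolation on the tent scale. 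The structural payoff, worth emphasizing, is that because $T^p_\beta$ uses genuine $L^2$ averages over Whitney cubes, the non-integrable time singularity of $\cM_L$ is never confronted pointwise in time --- it is absorbed entirely into the $T^2_0$ base estimate.
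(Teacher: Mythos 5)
Your overall architecture matches the paper's: identify the kernels $e^{-(t-s)L}$, $\nabla e^{-(t-s)L}$, $Le^{-(t-s)L}$ as singular kernels with $L^q$--$L^2$ off-diagonal decay for $q$ near $p_-(L)$, absorb the time singularity into an $L^2$-in-time base estimate, bound atoms for $p\le 1$, and dualize/interpolate for $p\ge 2$. But there is a genuine gap exactly where the range $p>p_L(\beta)$ is sharp. You assert that upgrading to $L^{p_0}$--$L^2$ off-diagonal estimates with $p_0$ near $p_-(L)$ and ``summing the annular tails at the correct integrability'' reaches the threshold $p_L(\beta)$. This works only when $p_L(\beta)<1$, i.e.\ when $(p_-(L))'>\frac{n}{2\beta+1}$ (Case \eqref{case:large_q'_sio_tent_p<2} of Proposition \ref{prop:sio_tent_p<2}). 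Whenever $2\beta+1$ is small relative to $n/(p_-(L))'$ --- in particular, for every $L$ once $\beta$ is close enough to $-1/2$ --- one has $p_L(\beta)\ge 1$, the target range $(p_L(\beta),2]$ sits inside $(1,2]$, and atoms give you nothing there; moreover no $T^1_\beta$ atomic bound is available for the regular part $T_0$ of the operator in this regime (the admissible range $p_q(\beta)<p\le 1$ is empty), so you cannot interpolate between a $T^1$ bound and the $T^2$ bound. The paper's way around this, which is its genuinely new ingredient, is Proposition \ref{prop:Tz_ext_p<2} together with Case \eqref{case:small_q'_sio_tent_p<2} of Proposition \ref{prop:sio_tent_p<2}: embed $T_0$ into the analytic family $T_z$ of \eqref{e:def_T_z}, prove the pointwise domination $\cA_{\beta+\kappa;m}(T_zf)\lesssim\cV_\beta(\cM_q(f))$ for $\Re(z)>-\beta-\frac{1}{2}$, pass through $A_{2/q}$-weighted estimates, limited-range extrapolation, and the comparison of vertical and conical square functions to get $T^p$-boundedness of $T_z$ for $q<p\le 2$ when $\Re(z)<0$, obtain the $T^1$ atomic bound for $\Re(z)$ sufficiently positive, and recover $T_0$ by Stein interpolation. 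Nothing in your molecular computation produces this.

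A secondary but real issue concerns $\cM_L$: its singularity is of order $|t-s|^{-1}$, and the near-diagonal (singular) part of the atom estimate requires boundedness from $L^2_\beta(\bR^{1+n}_+)$ to itself for all $\beta>-1/2$, not just the unweighted $\beta=0$ case that de Simon's theorem provides. This weighted bound is not automatic from the SIO structure (see Remark \ref{rem:sio_tent_p<2_L2beta_ext_necessity}); the paper imports it from \cite{Auscher-Axelsson2011_MR_L2beta}. Your proposal invokes only the $T^2_0$ estimate, which does not suffice for the singular part once $\beta\ne 0$. For $\cL_1$ and $\nabla\cL_1$ the analogous weighted bound is automatic (Lemma \ref{lem:ext_L2_kappa>0}, since $\kappa>0$), so there the omission is harmless.
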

We now define 
\begin{equation}
    p_L(\beta):=\frac{np_-(L)}{n+(2\beta+1)p_-(L)},
    \label{e:def_pL(beta)_intro}
\end{equation}
where $p_-(L)$ is introduced above. Observe that $\DotH^{2\beta+1,p_L(\beta)}(\bR^n)$ embeds in $L^{p_-(L)}(\bR^n)$ by Sobolev embedding, so $p_L(\beta)$ is in fact a Sobolev exponent of $p_-(L)$ at regularity $2\beta+1$. We also see that information of boundedness of the semigroup on $L^p(\bR^n)$ for $p<2$ helps improving the range. 

Compared to \cite{auscher12maxreg}, we improve here the lower limit of tent space boundedness  to $p_L(\beta)$ in all cases, thanks to a new extrapolation argument. We additionally treat here the operators $\cL_{1}$ and $\nabla \cL_{1}$. Some results in this direction with application towards stochastic analysis are in  \cite{Auscher-vanNeerven-Portal2014_StochasticMR} and \cite{Portal-Veraar2019_MR}. For this, we not only extend the singular integral theory to different (singular) kernels, but also correct an inaccuracy in the definition of singular integral operators on tent spaces. As we include different singularities,  it is somehow difficult to explain the correction without going again through the technical lemmas, so we take this opportunity to do it here. In the end, it streamlines the presentation and allows for improvement, see Section \ref{ssec:sio}.  

Correctly reframed using the point of view of weak solutions, this proposition gives us existence. 

Uniqueness relies on a method already used in \cite{auscher-monniaux-portal2019existence}, which can be thought as an elaboration of Green's identity in the context of rough coefficients. It allows one to show that any global (or in a strip) weak solution to \eqref{e:abstract-Cauchy} with null source term in a tent space satisfies an interior representation, which we may call  ``\textit{homotopy identity}",
\[ u(t)=e^{-(t-s)L} u(s) \quad \text{ in } \scrD'(\bR^n) \]  
when $0<s<t$, in the sense that
\[ \int_{\bR^n} u(t,x) \ovh(x) dx = \int_{\bR^n} u(s,x) \overline{((e^{-(t-s)L})^\ast h)}(x) dx \]
for any $h \in C_\rmc^\infty(\bR^n)$.  With this identity, the problem is reduced to understand the boundary behavior of solutions when $s\to 0$ and use regularity $(e^{-(t-s)L})^\ast h$ to $(e^{-tL})^\ast h$ in appropriate topology as $s \to 0$, which only depends on functional calculus of the operator $L$. \\

To finish this introduction, we mention the results addressed in a second work \cite{Auscher-Hou2024_HCL}. As we observed after Theorem \ref{thm:main}, the weak solutions to \eqref{e:ic} in $T^p_{\beta+1}$ have null Whitney trace when $2\beta+1>0$ (and also zero distributional limit). However, if $2\beta+1=0$, then the initial value problem can be uniquely posed with non-zero Whitney trace, even for time-dependent coefficients, see \cite{auscher-monniaux-portal2019existence} and \cite{Zaton2020}. The solution class is given by $\nabla u \in T^p_{0}$. For time-independent coefficients, we will generalize it to $-1 < 2\beta+1 < 1$, where the initial data can be taken with Sobolev regularity $2\beta+1$.

At the same time, other source terms in divergence form $\Div F$ can be considered, in the spirit of the Lions' theory.  The function $F$ will be taken in $T^p_{\beta+{1}/{2}}$ for $2\beta+1>-1$, extending  \cite{Auscher-Portal2023_Lions}  which considers $2\beta+1=0$ (but for non-autonomous equations). In particular, this exhibits a range of $(\beta,p)$ for which the Cauchy problem
\[ \begin{cases}
    \partial_t u - \Div(A \nabla u) = \Div F \\
    u(0)=u_0
\end{cases} \]
can be uniquely posed for solutions $\nabla u \in T^p_{\beta+1/2}$ provided $F \in T^p_{\beta+{1}/{2}}$ and $u_0 \in \DotH^{2\beta+1,p}$ 
when $-1<2\beta+1<1$. When $2\beta+1 <0$, the solution $u$ will also lie in $T^p_{\beta+1}$, but no maximal regularity estimate.

%%%%% Organization
\subsection*{Organization}
\label{ssec:organization}
The paper is organized as follows. 

Section \ref{sec:sio} is devoted to the definitions of two classes of singular integral operators. We show that they are \textit{de facto} linked by duality. 

Section \ref{sec:sio_tent} is concerned with extension of these two classes of singular integral operators to tent spaces. Main results are given in Section \ref{ssec:results_sio_tent}. 

Sections \ref{sec:inhomo} and \ref{sec:mr} are main sections of this paper. Section \ref{ssec:results_ic} outlines the proof of the main theorem, Theorem \ref{thm:main}, about global well-posedness of \eqref{e:ic}, as a consequence of Theorems \ref{thm:ic_sol} and \ref{thm:unique_ic}. Theorem \ref{thm:ic_sol} utilises the theory of singular integral operators developed in Section \ref{sec:sio_tent} to establish existence through a series of weighted tent space estimates on the Duhamel solution $u$, its derivatives $\nabla u$, $\partial_t u$, and $\Div(A\nabla u)$. The proof is provided in Section \ref{ssec:exist_reg_ic} and Section \ref{sec:mr}, where maximal regularity estimates are established, and boundary behavior of $u$ is described. Theorem \ref{thm:unique_ic} states uniqueness of global weak solutions. Its proof is given in Section \ref{ssec:uniquess_ic}. 

%%%%% Notation
\subsection*{Notation}
\label{ssec:notation}
Throughout the paper, we write 
\[ [q,r]:=\frac{1}{q}-\frac{1}{r} \]
for any $q,r \in (0,\infty]$, if there is no confusion with closed intervals. We write $X \lesssim Y$ (or $X \lesssim_A Y$, resp.) if $X \le CY$ with an irrelevant constant $C$ (or depending on $A$, resp.), and say $X \eqsim Y$ if $X \lesssim Y$ and $Y \lesssim X$. Write $\bR^{1+n}_+:=\bR_+ \times \bR^n = (0,\infty) \times \bR^n$. For any (Euclidean) ball $B \subset \bR^n$, write $r(B)$ for the radius of $B$.

Let $(X,\mu)$ be a measure space. For any measurable subset $E \subset X$ with finite positive measure and $f \in L^1(E,\mu)$, we write
\[ \fint_E f d\mu := \frac{1}{\mu(E)} \int_E f d\mu. \]
Write $\|\cdot\|_p$ as an abbreviation for the norm $\|\cdot\|_{L^p(X,\mu)}$.

We write $L^p(\bR^n)=L^p(\bR^n,dx)$ where $dx$ is the Lebesgue measure, and $L^2(\bR^n)$ is equipped with the usual (complex) inner product. For any $\beta \in \bR$, denote by $L^2_\beta(\bR^{1+n}_+)$ the space $L^2(\bR_+,t^{-2\beta} dt;L^2(\bR^n))$.

We use the sans-serif font $\rmc$ in the scripts of function spaces in short of ``with compact support" in the prescribed set, and $\loc$ if the prescribed property holds on all compact subsets of the prescribed set.

%%%%% Acknowledgements
\subsection*{Acknowledgements}
The authors were supported by the ANR project RAGE ANR-18-CE40-0012. We would like to thank Luca Haardt and Jonas Lenz for pointing out a gap in one of our arguments and Luca Haardt also for a very careful reading to correct many typos. The second author would also like to thank Jing'er Ji for her long-lasting encouragements and discussions.
%%%%% Definitions of SIOs
\section{Singular integral operators}
\label{sec:sio}

In this section, we rectify and generalize the abstract model for singular integral operators in \cite{auscher12maxreg} via off-diagonal decay. 

Let $\pi_1,\pi_2$ be the projections of $\bR^{1+n}=\bR\times\bR^n$ to $\bR$ and $\bR^n$ components, respectively. Denote by $\Delta$ the set $\{(t,s) \in (0,\infty) \times (0,\infty):t=s\}$ and write $\Delta^c:=((0,\infty) \times (0,\infty)) \setminus \Delta$. 

\begin{definition}[Off-diagonal decay]
\label{def:off-diagonal_decay}
Let $\kappa \in \bR, m \in \bN, 1 \leq q \leq r \leq \infty$, and $M>0$ be constants. Let $\{K(t,s)\}_{(t,s) \in \Delta^c}$ be a family of bounded operators on $L^2(\bR^n)$. We say that it has \textit{$L^q-L^r$ off-diagonal decay of type $(\kappa,m,M)$} if there exists a constant $C>0$ such that for any Borel sets $E,F \subset \bR^n$, $f \in L^2(\bR^n) \cap L^q(\bR^n)$, and a.e. $(t,s) \in \Delta^c$,
\begin{equation}
    \|\I_E K(t,s) \I_F f\|_r \leq C|t-s|^{-1+\kappa-\frac{n}{m}[q,r]} \left ( 1+\frac{\dist(E,F)^m}{|t-s|} \right )^{-M} \|\I_F f\|_q.
    \label{e:def_off-diagonal_decay}
\end{equation}
\end{definition}

%%%%% SK
\subsection{Singular kernels}
\label{ssec:sk}

Let $\scrL(X)$ be the space of bounded linear operators on a (quasi-)Banach space $X$.

\begin{definition}[Singular kernel]
\label{def:sk(kappa,m,q,M)}

Let $\kappa \in \bR, m \in \bN, 1 \leq q \leq \infty$, and $M>0$ be constants. We say that an operator-valued function $K:\Delta^c \rightarrow \scrL(L^2(\bR^n))$ is a \textit{singular kernel} (SK) \textit{of type $(\kappa,m,q,M)$}, if
\begin{enumerate}
    \item $K$ is strongly measurable;
    \item There exists some constant $C>0$ such that for a.e. $(t,s) \in \Delta^c$,
    \begin{equation}
        \|K(t,s)\|_{\scrL(L^2(\bR^n))} \leq C|t-s|^{-1+\kappa};
        \label{e:def_sk_bdd_K(t,s)_L2}
    \end{equation}
    \item The family $\{K(t,s)\}_{(t,s) \in \Delta^c}$ has off-diagonal $L^q-L^2$ decay of type $(\kappa,m,M)$ if $1 \leq q \leq 2$, and off-diagonal $L^2-L^q$ decay of type $(\kappa,m,M)$ if $2 \leq q \leq \infty$.
\end{enumerate}
Let $\sk^\kappa_{m,q,M}$ be the set of singular kernels of type $(\kappa,m,q,M)$ and $\sk^\kappa_{m,q,\infty}$ be the intersection $\bigcap_{M>0} \sk^\kappa_{m,q,M}$.
\end{definition}

\begin{remark}
\label{rem:sk_truncate}
    The class $\sk^\kappa_{m,q,M}$ is stable with respect to truncation in the sense that, if $K \in \sk^\kappa_{m,q,M}$, then $\I_A K \in \sk^\kappa_{m,q,M}$ for any measurable set $A \subset \Delta^c$. The two cases, $A=\{t<s\}$ and $A=\{t>s\}$ are of particular interest.
\end{remark}

Denote by $T^\ast$ the adjoint of a linear operator $T$ on $L^2(\bR^n)$.

\begin{lemma}
\label{lemma:sk_duality_off-diagonal}
Let $1 \leq q \leq r \leq \infty$. Let $\{K(t,s)\}_{(t,s) \in \Delta^c}$ be a family of bounded operators on $L^2(\bR^n)$ with $L^q-L^r$ off-diagonal decay of type $(\kappa,m,M)$. Then $\{K(s,t)^\ast\}_{(t,s) \in \Delta^c}$ has $L^{r'}-L^{q'}$ off-diagonal decay of type $(\kappa,m,M)$.
\end{lemma}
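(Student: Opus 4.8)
The plan is to deduce the claim from a duality argument, exploiting that the structural quantities in \eqref{e:def_off-diagonal_decay} are insensitive to the relevant symmetries. First I would record two elementary observations. The exponent is unchanged because $[r',q']=\frac1{r'}-\frac1{q'}=\frac1q-\frac1r=[q,r]$, while $|t-s|=|s-t|$ and $\dist(F,E)=\dist(E,F)$; hence the target inequality for $\{K(s,t)^\ast\}$ must carry exactly the same power of $|t-s|$ and the same decay factor as the hypothesis for $\{K(t,s)\}$. Moreover, since $(t,s)\mapsto(s,t)$ is a measure-preserving involution of $\Delta^c$ and $K(s,t)^\ast$ is bounded on $L^2(\bR^n)$ with the same norm as $K(s,t)$, the family $\{K(s,t)^\ast\}_{(t,s)\in\Delta^c}$ is well defined and the pointwise a.e.\ bounds transfer.

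Next, I would fix $(t,s)\in\Delta^c$, Borel sets $E,F\subset\bR^n$, and a function $g\in L^2(\bR^n)\cap L^{r'}(\bR^n)$, and estimate $\|\I_E K(s,t)^\ast \I_F g\|_{q'}$ through the dual characterization of the $L^{q'}$ norm, testing against $\phi\in L^2(\bR^n)\cap L^q(\bR^n)$ with $\|\phi\|_q\le 1$. The point of restricting the test functions to $L^2\cap L^q$ is that the cut-offs $\I_E\phi$ then remain admissible inputs for the off-diagonal hypothesis. Moving the adjoint across the pairing and inserting the (free) indicator $\I_F$, one obtains
\[ \jp{\I_E K(s,t)^\ast \I_F g,\phi} = \jp{\I_F g,\, \I_F K(s,t)\I_E\phi}, \]
so H\"older's inequality with exponents $r',r$ gives $|\jp{\I_E K(s,t)^\ast \I_F g,\phi}|\le \|\I_F g\|_{r'}\,\|\I_F K(s,t)\I_E\phi\|_r$. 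Applying the $L^q$--$L^r$ off-diagonal decay of $\{K(t,s)\}$ at the point $(s,t)$, with the set $F$ on the left, $E$ on the right, and input function $\phi$, bounds $\|\I_F K(s,t)\I_E\phi\|_r$ by $C|t-s|^{-1+\kappa-\frac nm[q,r]}(1+\dist(E,F)^m/|t-s|)^{-M}\|\I_E\phi\|_q$, and $\|\I_E\phi\|_q\le\|\phi\|_q\le 1$. Taking the supremum over $\phi$ yields exactly the asserted $L^{r'}$--$L^{q'}$ off-diagonal decay of type $(\kappa,m,M)$ for $\{K(s,t)^\ast\}$, with the same constant $C$.

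I expect the only point requiring genuine care to be the duality step, namely that for every measurable $h$ one has $\|h\|_{q'}=\sup\{|\jp{h,\phi}|:\phi\in L^2(\bR^n)\cap L^q(\bR^n),\ \|\phi\|_q\le1\}$ uniformly across the whole range $1\le q\le\infty$. For $1\le q<\infty$ this follows from density of compactly supported simple functions (which lie in $L^2\cap L^q$) in $L^q$; the endpoint $q=\infty$, where $q'=1$, needs a separate remark, recovering $\|h\|_1$ via $\phi_R=\overline{\sgn(h)}\,\I_{B_R}\in L^2(\bR^n)\cap L^\infty(\bR^n)$ and monotone convergence as $R\to\infty$. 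Since $\I_E K(s,t)^\ast\I_F g\in L^2(\bR^n)$, this characterization applies and also shows a posteriori that $\I_E K(s,t)^\ast\I_F g\in L^{q'}(\bR^n)$ once the supremum is seen to be finite, so no integrability beyond $L^2$ need be assumed in advance.
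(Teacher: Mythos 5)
Your argument is correct and follows essentially the same route as the paper: estimate the $L^{q'}$ norm by duality against test functions in $L^2\cap L^q$, move the adjoint across the pairing, apply the $L^q$--$L^r$ off-diagonal hypothesis at the transposed point, and use $[q,r]=[r',q']$ together with the symmetry of $|t-s|$ and $\dist(E,F)$. Your extra care about the converse-H\"older characterization of $\|\cdot\|_{q'}$ over $L^2\cap L^q$ test functions (including the endpoint $q=\infty$) is a welcome refinement of a step the paper leaves implicit, but it does not change the proof.
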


\begin{proof}
Let $E,F \subset \bR^n$ be two Borel sets. For any $f \in L^2(\bR^n) \cap L^{r'}(\bR^n)$ and a.e. $(t,s) \in \Delta^c$,
\begin{align*}
\| \I_F K(s,t)^\ast (\I_E f)\|_{q'}
 &= \sup_\phi \left \langle \I_F \phi, K(s,t)^\ast (\I_E f) \right \rangle_{L^2(\bR^n)} \\ 
 &= \sup_\phi \left \langle K(s,t)(\I_F \phi), \I_E f \right \rangle_{L^2(\bR^n)} \\ 
 &\leq \sup_\phi \|\I_E K(s,t) (\I_F \phi)\|_r \|\I_E f\|_{r'} \\
 &\lesssim |t-s|^{-1+\kappa-\frac{n}{m}[q,r]} \left ( 1+\frac{\dist(E,F)^m}{|t-s|} \right )^{-M} \|\I_E f\|_{r'},
\end{align*}
where $\phi$ is taken in $L^2(\bR^n) \cap L^q(\bR^n)$ with support in $F$ and $\|\phi\|_q=1$. Note that $[q,r]=[r',q']$ to conclude the proof.
\end{proof}

\begin{cor}
\label{cor:sk_duality}
If $K$ belongs to $\sk^\kappa_{m,q,M}$, then $K^\ast:(t,s) \mapsto K(s,t)^\ast$ belongs to  $\sk^\kappa_{m,q',M}$.
\end{cor}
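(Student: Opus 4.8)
The goal is to prove Corollary~\ref{cor:sk_duality}: if $K \in \sk^\kappa_{m,q,M}$, then $K^\ast:(t,s) \mapsto K(s,t)^\ast$ belongs to $\sk^\kappa_{m,q',M}$. The plan is to verify the three defining conditions of Definition~\ref{def:sk(kappa,m,q,M)} for $K^\ast$, with type parameters $(\kappa,m,q',M)$.

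First I would address measurability. Since $K$ is strongly measurable and the map $(t,s)\mapsto(s,t)$ is a measurable involution of $\Delta^c$ preserving the diagonal complement, the composition $(t,s)\mapsto K(s,t)$ is strongly measurable; taking adjoints preserves strong measurability on $L^2(\bR^n)$ (the adjoint is an isometric, weakly continuous operation, and strong measurability reduces to weak measurability plus separability via Pettis, which is stable here). So $K^\ast$ satisfies condition (1). Next, for the $L^2$-operator-norm bound (2), I would simply use $\|K(s,t)^\ast\|_{\scrL(L^2)} = \|K(s,t)\|_{\scrL(L^2)} \le C|s-t|^{-1+\kappa} = C|t-s|^{-1+\kappa}$, since the operator norm is invariant under taking adjoints and $|s-t|=|t-s|$.

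The third condition is the heart of the matter, and this is exactly what Lemma~\ref{lemma:sk_duality_off-diagonal} delivers. I would split into the two cases of Definition~\ref{def:sk(kappa,m,q,M)}(3). If $1 \le q \le 2$, then $K$ has $L^q$--$L^2$ off-diagonal decay of type $(\kappa,m,M)$; applying Lemma~\ref{lemma:sk_duality_off-diagonal} with the pair $(q,r)=(q,2)$ shows that $\{K(s,t)^\ast\}$ has $L^{2'}$--$L^{q'}$ decay, i.e. $L^2$--$L^{q'}$ decay, of type $(\kappa,m,M)$. Since $1\le q\le 2$ forces $2 \le q' \le \infty$, this is precisely the required decay for the index $q'$ in the ``$2 \le q' \le \infty$'' branch of the definition. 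Symmetrically, if $2 \le q \le \infty$, then $K$ has $L^2$--$L^q$ decay; applying the lemma with $(q,r)=(2,q)$ yields $L^{q'}$--$L^{2}$ decay for $K^\ast$, and since now $1 \le q' \le 2$, this matches the other branch of the definition for the index $q'$. In both cases the type $(\kappa,m,M)$ is unchanged, as asserted.

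I do not expect any genuine obstacle here, as the content is already isolated in the lemma; the only point requiring mild care is the bookkeeping of which branch of condition~(3) applies after duality, namely that passing from $q$ to $q'$ swaps the two cases ($q\le 2 \Leftrightarrow q'\ge 2$) so that the lemma's output lands in exactly the right branch. Verifying the measurability claim for $K^\ast$ is routine but should be stated explicitly for completeness.
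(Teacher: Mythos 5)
Your proof is correct and follows essentially the same route as the paper: strong measurability of $K^\ast$ via Pettis' theorem and separability of $L^2(\bR^n)$, the operator-norm bound by invariance under adjoints, and the off-diagonal decay by applying Lemma~\ref{lemma:sk_duality_off-diagonal} to the appropriate pair of exponents. Your explicit bookkeeping that duality swaps the two branches of condition~(3) ($q\le 2 \Leftrightarrow q'\ge 2$) is exactly what the paper leaves implicit.
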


\begin{proof}
    The strong measurability of $K^\ast$ follows from Pettis' measurability theorem (see \cite[Theorem 1.1.6]{Hytonen-NVW2016BanachSpaces}), since $L^2(\bR^n)$ is separable. The boundedness condition is obvious, while the off-diagonal decay follows from Lemma \ref{lemma:sk_duality_off-diagonal}. 
\end{proof}

%%%%% Integral of SK
\subsection{Integrals associated with singular kernels}
\label{ssec:si_of_sk}

For any function $f$ on $\bR^{1+n}_+$, denote by $f(s)$ the function $x \mapsto f(s,x)$. Let us first consider a non-singular integral. Let $K$ be in $\sk^\kappa_{m,q,M}$ with $\kappa>0$. For any $\beta>-1/2$ and $f \in L^2_\beta(\bR^{1+n}_+)$, we claim that the integral 
\begin{equation}
    \int_0^t K(t,s)f(s) ds
    \label{e:def_si+}
\end{equation}
makes sense as a Bochner integral on $L^2(\bR^n)$ for a.e.~$t \in (0,\infty)$. Indeed, \eqref{e:def_sk_bdd_K(t,s)_L2} implies
\begin{align*}
    &\left (\int_0^\infty \left ( t^{-\beta-\kappa} \int_0^t \|K(t,s)f(s)\|_2 ~ds \right )^2 dt \right )^{1/2} \\ 
    &\quad \lesssim \left (\int_0^\infty \left ( \int_0^t s^{\beta+\frac{1}{2}} t^{-\beta-\kappa+\frac{1}{2}} (t-s)^{-1+\kappa} \|s^{-\beta+\frac{1}{2}} f(s)\|_2 \frac{ds}{s} \right )^2 \frac{dt}{t} \right )^{1/2}.
\end{align*}
Write $k(t,s)=\I_{\{s<t\}}(t,s) s^{\beta+\frac{1}{2}} t^{-\beta-\kappa+\frac{1}{2}} (t-s)^{-1+\kappa}$. For a.e. $t>0$,
\[ \int_0^\infty k(t,s) \frac{ds}{s} = \int_0^1 \lambda^{\beta+\frac{1}{2}} (1-\lambda)^{\kappa-1} \frac{d\lambda}{\lambda} \leq C(\beta,\kappa). \] With the same conditions, for a.e. $s>0$,
\[ \int_0^\infty k(t,s) \frac{dt}{t} = \int_0^\infty \lambda^{\kappa-1} (\lambda+1)^{-\beta-\kappa-\frac{1}{2}} d\lambda \leq C(\beta,\kappa). \]
Schur test hence implies
\[ \left (\int_0^\infty \left ( t^{-\beta-\kappa} \int_0^t \|K(t,s)f(s)\|_2 ~ds \right )^2 dt \right )^{1/2} \lesssim \|f\|_{L^2_\beta(\bR^{1+n}_+)} < \infty. \]
This proves our claim. It also shows that the function 
$$(t,y) \mapsto \left(\int_0^t K(t,s)f(s)ds\right)(y)$$
belongs to $L^2_{\beta+\kappa}(\bR^{1+n}_+)$. Thanks to the strong measurability of $K$, we have the pointwise evaluation
(see \cite[Proposition 1.2.25]{Hytonen-NVW2016BanachSpaces}) in the sense that for a.e.~$(t,y) \in \bR^{1+n}_+$,
\begin{equation}
    \left ( \int_0^t K(t,s)f(s) ds \right )(y) = \int_0^t (K(t,s)f(s))(y) ds.
    \label{e:def_si+_pointwise_evaluation}
\end{equation}

For $f \in L^2_{-\beta-\kappa}(\bR^{1+n}_+)$, by a duality argument, we can also define the $L^2(\bR^n)$-valued Bochner integral 
\begin{equation}
    \int_t^\infty K(t,s)f(s) ds
    \label{e:def_si-}
\end{equation}
for a.e.~$t \in (0,\infty)$, and obtain similarly
\begin{equation}
    \left( \int_t^\infty K(t,s)f(s) ds \right) (y) = \int_t^\infty (K(t,s)f(s))(y) ds
    \label{e:si-_pointwise}
\end{equation}
for a.e.~$(t,y) \in \bR^{1+n}_+$. We summarize the above discussion in the following lemma.

\begin{lemma}
    \label{lem:ext_L2_kappa>0}
    Let $K$ be in $\sk^\kappa_{m,q,M}$ with $\kappa>0$. For any $\beta>-1/2$,
    \begin{enumerate}
        \item \eqref{e:def_si+} defines a bounded operator from $L^2_\beta(\bR^{1+n}_+)$ to $L^2_{\beta+\kappa}(\bR^{1+n}_+)$, and \eqref{e:def_si+_pointwise_evaluation} holds;
        \item \eqref{e:def_si-} defines a bounded operator from $L^2_{-\beta-\kappa}(\bR^{1+n}_+)$ to $L^2_{-\beta}(\bR^{1+n}_+)$, and \eqref{e:si-_pointwise} holds.
    \end{enumerate}
\end{lemma}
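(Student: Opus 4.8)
The plan is to prove the two parts separately: part (1) by a direct Schur test, and part (2) by duality, reducing it to part (1) applied to the adjoint kernel.

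For part (1), I would carry out exactly the computation displayed just before the lemma. Using only the $L^2$-bound \eqref{e:def_sk_bdd_K(t,s)_L2}, I reduce the $L^2_{\beta+\kappa}$-norm of $\int_0^t K(t,s)f(s)\,ds$ to the action of the scalar kernel $k(t,s)=\I_{\{s<t\}}\,s^{\beta+1/2}t^{-\beta-\kappa+1/2}(t-s)^{-1+\kappa}$ on the profile $\|s^{-\beta+1/2}f(s)\|_2$, where both the $s$- and $t$-variables carry the dilation-invariant measures $ds/s$ and $dt/t$ (so that $\|s^{-\beta+1/2}f(s)\|_2$ lives in $L^2(ds/s)$ with norm $\|f\|_{L^2_\beta}$). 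After the substitution $s=\lambda t$ (resp.\ $t=\lambda s$) the two Schur integrals become Beta-type integrals that converge precisely because $\kappa>0$ tames the diagonal singularity $(t-s)^{-1+\kappa}$ and $\beta>-1/2$ tames the endpoint. The Schur test then yields both the absolute convergence of the Bochner integral for a.e.\ $t$ and the boundedness $L^2_\beta\to L^2_{\beta+\kappa}$; the pointwise identity \eqref{e:def_si+_pointwise_evaluation} follows from strong measurability of $K$ via \cite[Proposition 1.2.25]{Hytonen-NVW2016BanachSpaces}.

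For part (2), I would first secure that \eqref{e:def_si-} converges as an $L^2(\bR^n)$-valued Bochner integral for a.e.\ $t$ by a Schur test strictly parallel to that of part (1), now governed by the scalar kernel $\I_{\{t<s\}}\,t^{\beta+1/2}s^{-\beta-\kappa+1/2}(s-t)^{-1+\kappa}$, whose two Schur integrals are again finite exactly under $\kappa>0$ and $\beta>-1/2$; the pointwise evaluation \eqref{e:si-_pointwise} then follows from strong measurability as before. With convergence in hand, I would obtain the mapping property by duality: with respect to the sesquilinear pairing $\langle f,g\rangle=\int_0^\infty \langle f(t),g(t)\rangle_{L^2(\bR^n)}\,dt$, a Fubini exchange over the region $\{0<s<t\}$ identifies \eqref{e:def_si-} built from $K$ with the adjoint of \eqref{e:def_si+} built from the dual kernel $K^\ast(t,s)=K(s,t)^\ast$. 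By Corollary \ref{cor:sk_duality}, $K^\ast\in \sk^\kappa_{m,q',M}$, so part (1) applies to $K^\ast$ and gives boundedness $L^2_\beta\to L^2_{\beta+\kappa}$. Since under this pairing $(L^2_\gamma)'=L^2_{-\gamma}$ isometrically (write $f=t^\gamma\tilde f$ with $\tilde f\in L^2(dt;L^2(\bR^n))$), the adjoint is bounded $L^2_{-\beta-\kappa}\to L^2_{-\beta}$, which is the claim.

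The computations here are routine Schur estimates, so the only point demanding genuine care is the identification of the abstractly defined Banach-space adjoint with the concrete Bochner integral \eqref{e:def_si-}: this requires the absolute convergence established first, so that the Fubini exchange in the pairing is legitimate and the two operators agree a.e. I expect this bookkeeping—keeping the weights, the dilation-invariant measures, and the $q\leftrightarrow q'$ swap in Corollary \ref{cor:sk_duality} aligned—to be the main, though ultimately minor, obstacle.
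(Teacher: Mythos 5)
Your proposal is correct and follows essentially the same route as the paper: part (1) is exactly the Schur-test computation with the kernel $k(t,s)=\I_{\{s<t\}}s^{\beta+1/2}t^{-\beta-\kappa+1/2}(t-s)^{-1+\kappa}$ carried out in the text preceding the lemma, and part (2) is the duality argument the paper invokes (you merely make explicit the transposed Schur kernel, the identification of \eqref{e:def_si-} with the adjoint of \eqref{e:def_si+} for $K^\ast$, and the isometry $(L^2_\gamma)'=L^2_{-\gamma}$). No gaps.
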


We then turn to $\kappa \leq 0$, where a non-integrable singularity occurs at $t=s$. Still, as we are interested in handling functions of $(t,x)$, we  adopt a refined  strategy based on the following lemma. A subset $Q \subset \bR^{1+n}_+$ is called a \textit{rectangle} if it is of product form $Q=\pi_1(Q) \times \pi_2(Q)$. It is called a \textit{bounded} rectangle if both components are bounded.

\begin{lemma}
\label{lemma:sk_norm_schur}
Let $K\in \sk^\kappa_{m,q,M}$ with $\kappa \leq 0$ and $\frac{n}{m}|[q,2]|-\kappa < M \leq \infty$. Let $Q_0,Q_1$ be two bounded rectangles in $\bR^{1+n}_+$ with
\[ \max\{\dist(\pi_i(Q_0),\pi_i(Q_1)):i=1,2\} \geq \epsilon > 0. \]
For a.e.~$(t,s) \in \bR_+ \times \bR_+$, define
\[ \omega_{Q_0,Q_1}(t,s):=\| \I_{Q_0}(t) K(t,s) \I_{Q_1}(s) \|_{\scrL(L^2(\bR^n))}. \]
Then, it holds that
\begin{align*}
    \int_0^\infty \omega_{Q_0,Q_1}(t,s) ds &\leq C(\epsilon,Q_0,Q_1,\kappa,m,M,q), \\ 
    \int_0^\infty \omega_{Q_0,Q_1}(t,s) dt &\leq C(\epsilon,Q_0,Q_1,\kappa,m,M,q).
\end{align*}
\end{lemma}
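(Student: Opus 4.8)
The plan is to estimate the integrals of $\omega_{Q_0,Q_1}(t,s)$ directly using the two defining properties of a singular kernel, namely the pointwise $L^2$-bound \eqref{e:def_sk_bdd_K(t,s)_L2} and the off-diagonal decay, and to choose between them according to how the two rectangles are separated. The hypothesis guarantees that either the time-projections $\pi_1(Q_0),\pi_1(Q_1)$ are separated by $\epsilon$ in $\bR_+$, or the spatial projections $\pi_2(Q_0),\pi_2(Q_1)$ are separated by $\epsilon$ in $\bR^n$. These two cases call for different bounds, so I would split the argument accordingly.

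\textbf{Time-separated case.} Suppose $\dist(\pi_1(Q_0),\pi_1(Q_1)) \geq \epsilon$. Then for $t \in \pi_1(Q_0)$ and $s \in \pi_1(Q_1)$ we have $|t-s| \geq \epsilon$, so the singularity $|t-s|^{-1+\kappa}$ at $t=s$ is never reached on the support of $\omega_{Q_0,Q_1}$. Here I would simply use \eqref{e:def_sk_bdd_K(t,s)_L2}, giving $\omega_{Q_0,Q_1}(t,s) \leq C \I_{\pi_1(Q_0)}(t) \I_{\pi_1(Q_1)}(s) |t-s|^{-1+\kappa}$. Integrating in $s$ over the bounded interval $\pi_1(Q_1)$, the integrand is bounded by $C\epsilon^{-1+\kappa}$ when $\kappa \leq 0$ on a set of finite measure $|\pi_1(Q_1)|$, so the $s$-integral is controlled by a constant depending on $\epsilon,\kappa$ and the (finite) length of $\pi_1(Q_1)$; symmetrically for the $t$-integral. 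This case is routine.

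\textbf{Space-separated case.} Suppose instead $\dist(\pi_2(Q_0),\pi_2(Q_1)) =: d \geq \epsilon$. Now $|t-s|$ may be arbitrarily small, so one cannot discard the $|t-s|^{-1+\kappa}$ singularity by the trivial bound alone; instead the off-diagonal factor $(1+d^m/|t-s|)^{-M}$ must absorb it. Taking $E = \pi_2(Q_0)$, $F = \pi_2(Q_1)$ in the off-diagonal estimate and combining both cases $1 \leq q \leq 2$ and $2 \leq q \leq \infty$ into the single exponent $\tfrac{n}{m}|[q,2]|$, I obtain
\[ \omega_{Q_0,Q_1}(t,s) \leq C|t-s|^{-1+\kappa-\frac{n}{m}|[q,2]|} \left(1 + \frac{d^m}{|t-s|}\right)^{-M} \I_{\pi_1(Q_0)}(t)\I_{\pi_1(Q_1)}(s). \]
To integrate, substitute $\lambda = d^m/|t-s|$ (so small $|t-s|$ corresponds to large $\lambda$): the factor $(1+\lambda)^{-M}$ decays while $|t-s|^{-1+\kappa-\frac{n}{m}|[q,2]|}$ grows, and the precise hypothesis $\frac{n}{m}|[q,2]| - \kappa < M$ is exactly what makes the resulting integral converge near $\lambda = \infty$ (i.e.\ near the diagonal). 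The bounded extent of $\pi_1(Q_1)$ (resp.\ $\pi_1(Q_0)$) controls the remaining contribution away from the diagonal where $|t-s|$ is large. I expect this space-separated case to be the main obstacle: one must verify that the single exponent condition $\frac{n}{m}|[q,2]|-\kappa < M$ tames the near-diagonal singularity for both sub-ranges of $q$, and keep careful track of how the constant depends on $d \geq \epsilon$, $m$, and the sizes of the rectangles. Once the change of variables is set up, both the $ds$- and $dt$-integrals reduce to one-dimensional integrals of the form $\int_0^\infty \lambda^{a}(1+\lambda)^{-M}\,d\lambda$ with $a = -\kappa + \frac{n}{m}|[q,2]| - 1 > -1$ near infinity, which converge precisely under the stated assumption, yielding the two claimed bounds.
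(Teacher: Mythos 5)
Your proof is correct and follows essentially the same route as the paper's: a dichotomy according to which projections are separated, the trivial bound \eqref{e:def_sk_bdd_K(t,s)_L2} when the time projections are $\epsilon$-apart, and the off-diagonal decay together with the condition $M>\frac{n}{m}|[q,2]|-\kappa$ to tame the near-diagonal singularity when the spatial projections are (the paper simply bounds $(1+d^m/|t-s|)^{-M}\le \epsilon^{-mM}|t-s|^{M}$ instead of your change of variables, which amounts to the same computation). The one step you elide is the H\"older inequality on the bounded spatial projections needed to upgrade the $L^q$--$L^2$ (resp.\ $L^2$--$L^q$) off-diagonal bound to an $\scrL(L^2(\bR^n))$ bound; this is routine and only contributes to the dependence of $C$ on $Q_0,Q_1$, which the statement allows.
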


\begin{proof}
By symmetry, it suffices to prove the first inequality. Recall that $\I_{Q_0}(t)(x)= \I_{\pi_1 (Q_0)}(t) \I_{\pi_2(Q_0)}(x)$ and  
$\I_{Q_1}(s)(y)= \I_{\pi_1 (Q_1)}(s) \I_{\pi_2(Q_1)}(y)$.

If $\dist(\pi_1(Q_0),\pi_1(Q_1)) \geq \epsilon/2$, for any $t \in \pi_1(Q_0)$, we have
\[ \int_0^\infty \omega_{Q_0,Q_1}(t,s) ds \lesssim \int_{\pi_1(Q_1)} |t-s|^{-1+\kappa} ds \leq (\epsilon/2)^{\kappa-1} |\pi_1(Q_1)|. \]
Otherwise, $\dist(\pi_1(Q_0),\pi_1(Q_1)) \leq \epsilon/2$ but then $\dist(\pi_2(Q_0),\pi_2(Q_1)) \geq \epsilon$. Off-diagonal decay of $K(t,s)$ implies
\[ \omega_{Q_0,Q_1}(t,s) \lesssim
\begin{cases}
\epsilon^{-mM} |t-s|^{-1+\kappa-\frac{n}{m}[q,2]+M} |\pi_2(Q_1)|^{[q,2]} & \text{ if } 1 \leq q \leq 2 \\ 
\epsilon^{-mM} |t-s|^{-1+\kappa-\frac{n}{m}[2,q]+M} |\pi_2(Q_0)|^{[2,q]} & \text{ if } 2 \leq q \leq \infty
\end{cases} \]
by H\"older's inequality. In summary, we get
\[ \int_0^\infty \omega_{Q_0,Q_1}(t,s) ds \lesssim_{Q_0,Q_1,q} \epsilon^{-mM} \int_{\pi_1(Q_1)} |t-s|^{-1+\kappa-\frac{n}{m}|[q,2]|+M } ds, \]
which converges as $M>\frac{n}{m}|[q,2]|-\kappa$.
\end{proof}

Let $L^2_\rmb(\bR^{1+n}_+)$ be the subspace of $L^2(\bR^{1+n}_+)$ consisting of functions with bounded support in $\overline{\bR^{1+n}_+}$. For any $f \in L^2_\rmb(\bR^{1+n}_+)$, define $\pi(f):=\pi_1(\supp f) \times \pi_2(\supp f)$ as a bounded rectangle. Let $K,\kappa$, and $M$ be as in Lemma \ref{lemma:sk_norm_schur}. For any bounded rectangle $Q \subset \bR^{1+n}_+$ with $\dist(Q,\pi(f))>0$ and a.e.~$t \in \pi_1(Q)$,
\[ \int_0^\infty \I_{\pi_2(Q)} K(t,s) f(s) ds \]
is defined as a Bochner integral valued in $L^2(\bR^n)$, since
\[ \int_0^\infty \| \I_{\pi_2(Q)} (K(t,s)f(s))\|_2 ~ds \leq \int_0^\infty \omega_{Q,\pi(f)}(t,s) \|f(s)\|_2 ~ds, \]
which is finite thanks to Lemma \ref{lemma:sk_norm_schur} and Schur test. Pointwise evaluation also holds, \textit{i.e.}, for a.e.~$(t,y) \in Q$,
\[ \left( \int_0^\infty \I_{\pi_2(Q)} K(t,s) f(s) ds \right) (y) = \int_0^\infty \I_{\pi_2(Q)}(y) (K(t,s) f(s))(y) ds. \]
Furthermore, if $Q,Q' \subset (\bR^{1+n}_+ \setminus \pi(f))$ are bounded rectangles with $Q \cap Q' \ne \varnothing$, then for a.e. $(t,y) \in Q \cap Q'$, we have
\begin{equation}
    \int_0^\infty \I_{\pi_2(Q)}(y) (K(t,s)f(s))(y) dy = \int_0^\infty \I_{\pi_2(Q')}(y) (K(t,s) f(s))(y) ds.
    \label{e:si_Q_cap_Q'-identification}
\end{equation}
Indeed, as $t\in \pi_1(Q) \cap \pi_1(Q')$, we have when $x\in \pi_2(Q) \cap \pi_2(Q')$
\[ \lhs = \int_0^\infty \I_{\pi_2(Q')}(y) \I_{\pi_2(Q)}(y) (K(t,s) f(s))(y) ds = \rhs. \]
Thus, we can define the function
\[ (t,y) \mapsto \int_0^\infty (K(t,s)f(s))(y) ds \]
almost everywhere on $\pi(f)^c$ as follows. Pick an \textit{exhaustion} of $\pi(f)^c$ by bounded rectangles $\{Q_i\}_{i \in \bN}$, \textit{i.e.}, $\bigcup_i Q_i = \pi(f)^c$. Then, we set almost everywhere
\begin{equation}
    \int_0^\infty (K(t,s)f(s))(y) ds := \int_0^\infty \I_{\pi_2(Q_i)}(y) (K(t,s) f(s))(y) ds,
    \label{e:def_SI_of_SK}
\end{equation}
if $(t,y) \in Q_i$. This definition makes sense almost everywhere and is clearly independent of the choice of exhaustion $\{Q_i\}_{i \in \bN}$ by \eqref{e:si_Q_cap_Q'-identification}. We summarize the above discussion in the following lemma.
\begin{lemma}
    \label{lem:sk_of_si_kappa<0}
    Let $\kappa \le 0$, $\frac{n}{m}|[q,2]|-\kappa < M \le \infty$, and $K \in \sk^\kappa_{m,q,M}$. Then for any $f \in L^2_\rmb(\bR^{1+n}_+)$,
    \[ (t,x) \mapsto \int_0^\infty (K(t,s) f(s))(x) ds \]
    defines a function in $L^2_{\loc}(\overline{\bR^{1+n}_+} \setminus \pi(f))$. Moreover, for any $g \in L^2_\rmb(\bR^{1+n}_+)$ with $\pi(g) \cap \pi(f) = \varnothing$,
    \[ (t,s) \mapsto \langle K(t,s)f(s), g(t) \rangle_{L^2(\bR^n)} \]
    defines a function in $L^1((0,\infty) \times (0,\infty))$, and
    \begin{align*}
        &\int_0^\infty \int_0^\infty \langle K(t,s)f(s), g(t) \rangle_{L^2(\bR^n)} dsdt \\
        &\quad = \int_0^\infty \int_{\bR^n} \left( \int_0^\infty (K(t,s)f(s))(x) ds \right) \cdot \ovg(t,x) dtdx.
    \end{align*}
\end{lemma}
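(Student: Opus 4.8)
The plan is to assemble the facts established in the discussion preceding the statement, the key analytic input being the Schur-type bounds of Lemma \ref{lemma:sk_norm_schur}. First I would record the geometry: since $\supp f$ and $\supp g$ are bounded and closed, hence compact, the rectangles $\pi(f)$ and $\pi(g)$ are compact. Because $(A_1\times A_2)\cap(B_1\times B_2)=(A_1\cap B_1)\times(A_2\cap B_2)$, a product of sets is empty exactly when one coordinate factor is, so the hypothesis $\pi(f)\cap\pi(g)=\varnothing$ forces $\dist(\pi_i(\pi(f)),\pi_i(\pi(g)))\ge\epsilon>0$ for some $i\in\{1,2\}$. This is precisely the separation condition making Lemma \ref{lemma:sk_norm_schur} applicable with $Q_0,Q_1\in\{\pi(f),\pi(g)\}$, given the tail condition $\frac{n}{m}|[q,2]|-\kappa<M$.

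For the $L^2_\loc$ claim, well-definedness and independence of the exhaustion have already been checked via \eqref{e:si_Q_cap_Q'-identification} and \eqref{e:def_SI_of_SK}, so only a local $L^2$ bound remains. Any compact $\mathsf{K}\subset\overline{\bR^{1+n}_+}\setminus\pi(f)$ lies at positive distance from the compact set $\pi(f)$, hence is covered by finitely many bounded rectangles $Q$ with $\dist(Q,\pi(f))>0$. On each such $Q$, Lemma \ref{lemma:sk_norm_schur} bounds $\sup_t\int\omega_{Q,\pi(f)}(t,s)\,ds$ and $\sup_s\int\omega_{Q,\pi(f)}(t,s)\,dt$, so Schur's test gives $L^2$-boundedness of $a\mapsto\int\omega_{Q,\pi(f)}(\cdot,s)a(s)\,ds$; applied to $a=\|f(\cdot)\|_2$ this controls $\|\int_0^\infty\I_{\pi_2(Q)}K(\cdot,s)f(s)\,ds\|_{L^2(Q)}\lesssim\|f\|_{2}$, and summing over the cover gives finiteness on $\mathsf{K}$.

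For the $L^1$ claim and the Fubini identity, I would take $Q_0=\pi(g)$ (the output side, since $g(t)$ is tested on the left) and $Q_1=\pi(f)$ (the input side). Using that $f(s)$ is spatially supported in $\pi_2(Q_1)$ and vanishes for $s\notin\pi_1(Q_1)$, and symmetrically for $g$, Cauchy--Schwarz in $L^2(\bR^n)$ yields $|\langle K(t,s)f(s),g(t)\rangle|\le\omega_{Q_0,Q_1}(t,s)\|f(s)\|_2\|g(t)\|_2$. Integrating over $(0,\infty)^2$ and combining the Schur bound with Cauchy--Schwarz in $L^2(dt\,ds)$ gives $L^1$-integrability. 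With integrability secured, I would unwind the right-hand side: as $\ovg$ is supported in $Q_0$, the spatial integral restricts to $\pi_2(Q_0)$, where the inner integral equals $(\int_0^\infty\I_{\pi_2(Q_0)}K(t,s)f(s)\,ds)(x)$ by the pointwise evaluation already proved; commuting the $L^2$ inner product with the Bochner integral rewrites the $x$-integral as $\int_0^\infty\langle K(t,s)f(s),g(t)\rangle\,ds$, and Fubini (legitimate by the $L^1$ bound) rearranges this into the left-hand double integral.

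The main obstacle is essentially bookkeeping rather than new analysis, since the lemma collects the preceding construction: the points needing care are matching the support localizations to the correct order of arguments in $\omega_{Q_0,Q_1}$ (output side $\leftrightarrow Q_0$, input side $\leftrightarrow Q_1$) and confirming that the Schur/off-diagonal bounds of Lemma \ref{lemma:sk_norm_schur} apply, which hinges on the separation $\epsilon>0$ forced by disjointness of the product rectangles and on the tail condition $\frac{n}{m}|[q,2]|-\kappa<M$.
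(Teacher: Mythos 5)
Your proposal is correct and follows essentially the same route as the paper: the paper's proof simply invokes the construction \eqref{e:def_SI_of_SK} together with Lemma \ref{lemma:sk_norm_schur}, the Schur test, the localization identity $\langle K(t,s)f(s),g(t)\rangle=\langle \I_{\pi(g)}(t)K(t,s)\I_{\pi(f)}(s)f(s),g(t)\rangle$, pointwise evaluation of the Bochner integral, and Fubini. You have merely filled in the same steps in more detail, including the correct observation that disjointness of the compact product rectangles forces a positive coordinate separation, which is what makes Lemma \ref{lemma:sk_norm_schur} applicable.
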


\begin{proof}
    The first part follows from \eqref{e:def_SI_of_SK}, and the second part follows from  the first part by writing
    \[ \langle K(t,s)f(s), g(t) \rangle_{L^2(\bR^n)} = \langle \I_{\pi(g)}(t) K(t,s) \I_{\pi(f)}(s)f(s), g(t) \rangle_{L^2(\bR^n)} \]
    and using pointwise evaluation and Fubini's theorem.
\end{proof}

%%%%% SIO
\subsection{Singular integral operators}
\label{ssec:sio}
Define
\begin{equation}
    M_{\kappa,q}:=\begin{cases}
    0 & \text{ if } \kappa>0 \\ 
    \frac{n}{m}|[q,2]|-\kappa & \text{ if } \kappa \leq 0 
    \end{cases}\ .
    \label{e:def_M_kappa}
\end{equation}

\begin{definition}[SIO of type $+(\kappa,m,q,M)$]
\label{def:sio+}
Let $\kappa \in \bR, m \in \bN, 1 \leq q \leq \infty$, and $M_{\kappa,q} < M \leq \infty$ be constants. An operator $T$ is called a \textit{singular integral operator} (SIO) \textit{of type $+(\kappa,m,q,M)$}, if
\begin{enumerate}
    \item $T$ is a bounded operator from $L^2(\bR^{1+n}_+)$ to $L^2_\kappa(\bR^{1+n}_+)$;
    \item There exists $K \in \sk^\kappa_{m,q,M}$, called the \textit{kernel} of $T$, such that the representation
    \begin{equation}
        Tf(t,y) = \int_0^t (K(t,s) f(s))(y) ds
        \label{e:def_T-representation}
    \end{equation}
    holds
    \begin{itemize}
        \item for any $f \in L^2(\bR^{1+n}_+)$ and a.e. $(t,y) \in \bR^{1+n}_+$, if $\kappa>0$;
        \item for any $f \in L^2_\rmb(\bR^{1+n}_+)$ and a.e. $(t,y) \in \pi(f)^c$, if $\kappa \leq 0$.
    \end{itemize}
\end{enumerate}
Denote by $\sio^{\kappa+}_{m,q,M}$ the set of all SIOs of type $+(\kappa,m,q,M)$.
\end{definition}

There is a slight abuse of terminology since the operator is not singular when $\kappa>0$.

The next class of singular integral operators is concerned with negative type. We shall present results for it since it is instrumental for duality. It can be used to obtain estimates for backward Cauchy problems, but we leave to the reader the care of stating them.

\begin{definition}[SIO of type $-(\kappa,m,q,M)$]
\label{def:sio-}
Let $\kappa \in \bR, m \in \bN, 1 \leq q \leq \infty$, and $M_{\kappa,q} < M \leq \infty$ be constants. An operator $T$ is called a \textit{singular integral operator of type $-(\kappa,m,q,M)$}, if
\begin{enumerate}
    \item $T$ is a bounded operator from $L^2_{-\kappa}(\bR^{1+n}_+)$ to $L^2(\bR^{1+n}_+)$;
    \item There exists $K \in \sk^\kappa_{m,q,M}$, called the \textit{kernel} of $T$, such that the representation
    \begin{equation}
        Tf(t,y) = \int_t^{\infty} (K(t,s)f(s))(y) ds
        \label{e:def_sio-_T-representation}
    \end{equation}
    holds
    \begin{itemize}
        \item for any $f \in L^2_{-\kappa}(\bR^{1+n}_+)$ and a.e. $(t,y) \in \bR^{1+n}_+$, if $\kappa>0$;
        \item for any $f \in L^2_{-\kappa}(\bR^{1+n}_+)$ with bounded support in $\overline{\bR^{1+n}_+}$ and a.e. $(t,y) \in \pi(f)^c$, if $\kappa \leq 0$.
    \end{itemize}
\end{enumerate}
Denote by $\sio^{\kappa-}_{m,q,M}$ the set of all SIOs of type $-(\kappa,m,q,M)$.
\end{definition}

For $\kappa>0$, the integrals in \eqref{e:def_T-representation} and \eqref{e:def_sio-_T-representation} are well-defined by Lemma \ref{lem:ext_L2_kappa>0}. For $\kappa \leq 0$, one uses Lemma \ref{lem:sk_of_si_kappa<0}. In the case $\kappa=0$, compared with Definition 2.1 in \cite{auscher12maxreg}, our definitions seem more restrictive, but in fact, \eqref{e:def_T-representation} and \eqref{e:def_sio-_T-representation} are all what is needed in their proofs to obtain tent space estimates.

These two types of SIOs are linked via $L^2(\bR^{1+n}_+)$-duality given by
\[ \langle f,g \rangle_{L^2(\bR^{1+n}_+)} := \int_{\bR^{1+n}_+} f(t,y) \ovg(t,y) dtdy. \]

\begin{prop}
\label{prop:sio_duality}
    Let $\kappa \in \bR, m \in \bN, 1 \leq q \leq \infty$, and $M_{\kappa,q} < M \leq \infty$. Let $T$ be in $\sio^{\kappa \pm}_{m,q,M}$ with kernel $K$ and $T^\ast$ be the adjoint of $T$ with respect to $L^2(\bR^{1+n}_+)$-duality. Then $T^\ast$ lies in $\sio^{\kappa \mp}_{m,q',M}$ with kernel $K^\ast$.
\end{prop}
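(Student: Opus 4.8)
The plan is to verify, for $T^\ast$, the two defining properties of $\sio^{\kappa\mp}_{m,q',M}$ separately, treating in detail the case $T \in \sio^{\kappa+}_{m,q,M}$; the case $T \in \sio^{\kappa-}_{m,q,M}$ is entirely symmetric (reverse the roles of $\int_0^t$ and $\int_t^\infty$), and in any event follows by applying the first case to $T^\ast$ together with $K^{\ast\ast}=K$, $(q')'=q$, and $T^{\ast\ast}=T$. Throughout, the algebraic input is the fibrewise adjoint relation $\langle K(t,s)\phi,\psi\rangle_{L^2(\bR^n)} = \langle \phi, K(t,s)^\ast\psi\rangle_{L^2(\bR^n)}$ combined with Corollary \ref{cor:sk_duality}, which already guarantees $K^\ast \in \sk^\kappa_{m,q',M}$; so only the boundedness and the representation remain to be checked.

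For the boundedness, I would use the elementary duality of the weighted spaces under the unweighted pairing, namely $|\langle f,g\rangle_{L^2(\bR^{1+n}_+)}| \le \|f\|_{L^2_\kappa(\bR^{1+n}_+)}\|g\|_{L^2_{-\kappa}(\bR^{1+n}_+)}$, obtained by splitting the weight as $t^{-\kappa}\cdot t^{\kappa}$ and applying Cauchy--Schwarz. Thus for $f\in L^2(\bR^{1+n}_+)$ and $g\in L^2_{-\kappa}(\bR^{1+n}_+)$,
\[ |\langle Tf,g\rangle| \le \|Tf\|_{L^2_\kappa}\|g\|_{L^2_{-\kappa}} \lesssim \|f\|_{L^2}\|g\|_{L^2_{-\kappa}}, \]
so that $g\mapsto T^\ast g$ extends to a bounded operator from $L^2_{-\kappa}(\bR^{1+n}_+)$ to $L^2(\bR^{1+n}_+)$, which is property (1) of $\sio^{\kappa-}_{m,q',M}$.

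For the representation, I would compute $\langle Tf,g\rangle$ and read off the kernel of $T^\ast$. When $\kappa>0$, Lemma \ref{lem:ext_L2_kappa>0} legitimises inserting $Tf(t,y)=\int_0^t(K(t,s)f(s))(y)\,ds$ and applying Fubini, giving
\[
\begin{aligned}
\langle Tf,g\rangle
&= \iint_{\{s<t\}} \langle K(t,s)f(s),g(t)\rangle_{L^2(\bR^n)}\,ds\,dt \\
&= \iint_{\{s<t\}} \langle f(s), K(t,s)^\ast g(t)\rangle_{L^2(\bR^n)}\,ds\,dt,
\end{aligned}
\]
and swapping the order of integration and relabelling identifies $T^\ast g(t,y)=\int_t^\infty (K(s,t)^\ast g(s))(y)\,ds = \int_t^\infty (K^\ast(t,s)g(s))(y)\,ds$, which is exactly \eqref{e:def_sio-_T-representation} for the kernel $K^\ast$. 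When $\kappa\le 0$, one replaces $K$ by the truncation $\I_{\{s<t\}}K$, still in $\sk^\kappa_{m,q,M}$ by Remark \ref{rem:sk_truncate}, and tests against $f$ supported on bounded rectangles $Q\subset\pi(g)^c$; since then $\pi(f)\cap\pi(g)=\varnothing$, Lemma \ref{lem:sk_of_si_kappa<0} applies both to $\I_{\{s<t\}}K$ and to its dual $\I_{\{t<s\}}K^\ast$ and yields the same chain of identities, now with all integrals absolutely convergent. Letting $Q$ range over an exhaustion of $\pi(g)^c$ then gives $T^\ast g(t,y)=\int_t^\infty(K^\ast(t,s)g(s))(y)\,ds$ for a.e.\ $(t,y)\in\pi(g)^c$.

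The main obstacle is bookkeeping rather than a new idea: ensuring that every interchange of integration is licensed in the non-integrable r\'egime $\kappa\le 0$, where $Tf$ is only defined off $\pi(f)$. This is precisely why the disjoint-support hypothesis and the exhaustion procedure of Lemma \ref{lem:sk_of_si_kappa<0} are needed; the delicate point is to check that $\supp g\subset\pi(g)\subset\pi(f)^c$, so that the representation of $Tf$ is valid on the whole region where it is paired against $g$, and symmetrically that $\supp f\subset\pi(f)^c$ for $T^\ast g$ paired against $f$. Once these support inclusions are in place, the conjugate-linearity of the pairing and the fibrewise adjoint relation make the identification of $K^\ast$ automatic.
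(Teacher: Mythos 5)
Your proposal is correct and follows essentially the same route as the paper: boundedness of $T^\ast$ from $L^2_{-\kappa}(\bR^{1+n}_+)$ to $L^2(\bR^{1+n}_+)$ by weighted duality, Corollary \ref{cor:sk_duality} for the kernel class, and identification of the representation by pairing $\langle Tf,g\rangle$ against disjointly supported data, with Fubini licensed by Lemma \ref{lem:ext_L2_kappa>0} when $\kappa>0$ and by the absolute convergence underlying Lemma \ref{lem:sk_of_si_kappa<0} (equivalently Lemma \ref{lemma:sk_norm_schur} and Schur's test) when $\kappa\le 0$. One small caveat: your parenthetical claim that the case $T\in\sio^{\kappa-}_{m,q,M}$ ``follows by applying the first case to $T^\ast$'' is circular, since that would require already knowing $T^\ast\in\sio^{\kappa+}_{m,q',M}$; your primary justification --- running the symmetric argument directly with $\int_t^\infty$ in place of $\int_0^t$ --- is the correct one and is exactly what the paper leaves to the reader.
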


\begin{proof}
We only prove the case assuming $T \in \sio^{\kappa+}_{m,q,M}$ with $\kappa \leq 0$. The other cases are left to the reader. The boundedness of $T^\ast$ is clear, and Corollary \ref{cor:sk_duality} shows $K^\ast \in \sk^\kappa_{m,q',M}$, so it suffices to prove the representation. For any $f \in L^2_{-\kappa}(\bR^{1+n}_+)$ with bounded support in $\overline{\bR^{1+n}_+}$ and $g \in L^2_\rmb(\bR^{1+n}_+)$ so that $\pi(g)$ is disjoint with $\pi(f)$, we get by \eqref{e:def_T-representation}
\[ \left \langle Tg,f \right \rangle_{L^2(\bR^{1+n}_+)} = \int_{\bR^{1+n}_+} \left( \int_0^s (K(s,t) g(t))(y) dt \right ) \ovf(s,y) dsdy. \]
Thanks to Lemma \ref{lemma:sk_norm_schur}, Schur test implies
\begin{align*}
    &\int_{\bR^{1+n}_+} \int_0^s |(K(s,t) g(t))(y)| |f(s,y)| dt dsdy \\
    &= \int_0^\infty ds \int_0^s dt \int_{\bR^n} |\I_{\pi(f)}(s,y) (K(s,t) g(t))(y)| |f(s,y)| dy \\
    &\leq \int_0^\infty ds \int_0^s \omega_{\pi(f),\pi(g)}(s,t) \|g(t)\|_2 \|f(s)\|_2 ~dt \\
    &\lesssim \|g\|_{L^2(\bR^{1+n}_+)} \|f\|_{L^2(\bR^{1+n}_+)} \lesssim \|g\|_{L^2(\bR^{1+n}_+)} \|f\|_{L^2_{-\kappa}(\bR^{1+n}_+)} < \infty,
\end{align*}
using that $f$ has bounded support. Fubini's theorem ensures that
\begin{align*}
    \left \langle g,T^\ast f \right \rangle_{L^2(\bR^{1+n}_+)} 
    &= \left \langle Tg,f \right \rangle_{L^2(\bR^{1+n}_+)} \\
    &= \int_0^\infty ds \int_0^s \left \langle K(s,t)g(t),f(s) \right \rangle_{L^2(\bR^n)} dt \\
    &= \int_0^\infty dt \int_t^\infty \left \langle g(t), K(s,t)^\ast f(s) \right \rangle_{L^2(\bR^n)} ds \\
    &= \int_{\bR^{1+n}_+} g(t,x) \left( \int_t^\infty \overline{(K(s,t)^\ast f(s))}(x) ds \right) dtdx.
\end{align*}
We conclude by arbitrariness of $g$ that for a.e. $(t,x) \in \pi(f)^c$,
\[ T^\ast(f)(t,x) = \int_t^\infty (K(s,t)^\ast f(s))(x) ds. \]
Hence \eqref{e:def_sio-_T-representation} holds for $T^\ast$ with kernel $K^\ast$.
\end{proof}
%%%%% Extensions of SIO on tent spaces
\section{Extensions of SIOs to tent spaces}
\label{sec:sio_tent}

The bounded extensions of SIOs of type $\pm (0,m,q,M)$ on tent spaces have been studied in \cite{auscher12maxreg}. In this section, we improve their results in our framework, simplify some proofs, and generalize to SIOs of type $\pm (\kappa,m,q,M)$ for $\kappa \neq 0$. 

%%%%% Tent spaces
\subsection{Basic properties of tent spaces}
\label{ssec:tent}

Readers can refer to \cite{Coifman-Meyer-Stein85TentSpaces} for the original definitions of tent spaces and proofs of the following facts. Some needed refinements can be found in \cite{Auscher2011Change_angle,amenta18weighted}.

For $0<p<\infty$, $m \in \bN$, $\beta \in \bR$, the \textit{tent space} $T^{p;m}_\beta$ consists of (possibly vector-valued, strongly) measurable functions $f$ on $\bR^{1+n}_+$ for which
\[ \|f\|_{T^{p;m}_\beta} := \left ( \int_{\bR^n} \left ( \int_0^\infty \fint_{B(x,t^{1/m})} |t^{-\beta} f(t,y)|^2 dtdy \right )^{p/2} dx \right )^{1/p} < \infty. \]
Our notation for weighted tent spaces is different from that in \cite{auscher12maxreg} and in fact corresponds to $T^{p,2;m}_{-\beta/2}$ there.

Note that $T^{2;m}_\beta$ can be identified with $L^2_\beta(\bR^{1+n}_+)$ for any $m \in \bN$ by Fubini's theorem. Let $L^2_\rmc(\bR^{1+n}_+)$ be the space of $L^2(\bR^{1+n}_+)$-functions with compact support in $\bR^{1+n}_+$. It is dense in $T^{p;m}_\beta$. In fact, for any $f \in T^{p;m}_\beta$ and compact subset $K \subset \bR^{1+n}_+$, $\I_K f$ lies in $L^2(\bR^{1+n}_+)$ with
\begin{equation}
    \|\I_K f\|_{L^2(\bR^{1+n}_+)} \eqsim_{p,m,\beta,K} \|\I_K f\|_{T^{p;m}_\beta} \le \|f\|_{T^{p;m}_\beta}.
    \label{e:tent_truncate_Lq}
\end{equation}

For any $p \in (1,\infty)$ and $\beta \in \bR$, the dual of $T^{p;m}_\beta$ can be identified with $T^{p';m}_{-\beta}$ via $L^2(\bR^{1+n}_+)$-duality. It also identifies the dual of $T^{p;m}_\beta$ with $T^{\infty;m}_{-\beta,([p,1])}$ when $p \leq 1$. The space $T^{\infty;m}_{-\beta,([p,1])}$ consists of measurable functions $f$ on $\bR^{1+n}_+$ for which
\[ \|f\|_{T^{\infty;m}_{-\beta,([p,1])}} := \sup_{B} \frac{1}{|B|^{[p,1]}} \left ( \int_0^{r(B)^m} \fint_B |t^{\beta} f(t,y)|^2 dtdy \right )^{1/2} < \infty,  \]
where $B$ describes all balls of $\bR^n$. 

Note that $L^2_\rmc(\bR^{1+n}_+)$ is not dense in $T^{\infty;m}_{-\beta,([p,1])}$ for any $p \le 1$ and $\beta \in \bR$ with respect to the topology induced by its norm. However, it is dense in the $\weakstar$ topology.
\begin{lemma}
    \label{lemma:L2c-Tinfty_weak*dense}
    For any $0<p \le 1$ and $\beta \in \bR$, $L^2_\rmc(\bR^{1+n}_+)$ is dense in $T^{\infty;m}_{-\beta,([p,1])}$ with respect to the weak${}^\ast$ topology as dual of $T^{p;m}_\beta$.
\end{lemma}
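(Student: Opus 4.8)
The plan is to exploit the duality $T^{\infty;m}_{-\beta,([p,1])} = (T^{p;m}_\beta)^\ast$ via the $L^2(\bR^{1+n}_+)$-pairing recalled above, together with the standard characterization of weak${}^\ast$-density of a linear subspace. One first records that $L^2_\rmc(\bR^{1+n}_+)$ is genuinely a linear subspace of $T^{\infty;m}_{-\beta,([p,1])}$: for $g \in L^2_\rmc(\bR^{1+n}_+)$ supported in a compact set $K \subset \bR^{1+n}_+$, the time variable stays in a compact subinterval of $(0,\infty)$ on $\supp g$, so $t^\beta$ is bounded there; inspecting the defining supremum (the contribution of a ball $B$ vanishes once $r(B)$ is too small to reach $\supp g$, and is $\lesssim |B|^{-[p,1]-1/2}\|g\|_{L^2(\bR^{1+n}_+)}$ otherwise) shows $\|g\|_{T^{\infty;m}_{-\beta,([p,1])}} < \infty$ since $[p,1]+\tfrac12>0$.

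By the Hahn--Banach (bipolar) theorem, the weak${}^\ast$-closure of a linear subspace $S \subseteq (T^{p;m}_\beta)^\ast$ coincides with the annihilator of its pre-annihilator; hence $S$ is weak${}^\ast$-dense if and only if its pre-annihilator
\[ {}^\perp S := \{ f \in T^{p;m}_\beta : \langle f,g\rangle_{L^2(\bR^{1+n}_+)} = 0 \text{ for all } g \in S \} \]
is trivial. Thus it suffices to prove that ${}^\perp L^2_\rmc(\bR^{1+n}_+) = \{0\}$.

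So let $f \in T^{p;m}_\beta$ satisfy $\langle f, g\rangle_{L^2(\bR^{1+n}_+)} = 0$ for every $g \in L^2_\rmc(\bR^{1+n}_+)$, and fix a compact set $K \subset \bR^{1+n}_+$. By \eqref{e:tent_truncate_Lq}, $\I_K f \in L^2(\bR^{1+n}_+)$, and for any $g \in L^2_\rmc(\bR^{1+n}_+)$ supported in $K$ the abstract pairing reduces to the genuine inner product $\langle \I_K f, g\rangle_{L^2(\bR^{1+n}_+)}$, which vanishes by hypothesis. Since every element of $L^2(\bR^{1+n}_+)$ supported in $K$ is such a $g$, we conclude $\I_K f = 0$ almost everywhere. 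Exhausting $\bR^{1+n}_+$ by an increasing sequence of compact sets then gives $f = 0$ almost everywhere, hence $f=0$ in $T^{p;m}_\beta$, as desired.

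The argument is essentially soft, and I expect no genuine obstacle. The only points requiring care are the compatibility of the pairing --- namely that for compactly supported $g$ the duality pairing is literally the $L^2$-integral, which is exactly what the local $L^2$-identity \eqref{e:tent_truncate_Lq} furnishes --- and the (routine) verification that $L^2_\rmc(\bR^{1+n}_+)$ embeds into the dual space. The conceptual crux is simply invoking the pre-annihilator characterization of weak${}^\ast$-density and reducing matters to the local $L^2$-determinacy of $T^{p;m}_\beta$-functions.
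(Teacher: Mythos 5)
Your argument is correct, but it takes a genuinely different route from the paper. The paper's proof is constructive: it truncates $f$ to $f_R:=f\I_{C_R}$ with $C_R=[1/R,R]\times B(0,R)$, notes $f_R\in L^2_\rmc(\bR^{1+n}_+)$ by \eqref{e:tent_truncate_Lq}, and shows $\langle f-f_R,g\rangle\to0$ for each $g\in T^{p;m}_\beta$ by the duality estimate and dominated convergence. You instead invoke the bipolar characterization of weak${}^\ast$-density and reduce to showing that the pre-annihilator of $L^2_\rmc(\bR^{1+n}_+)$ in $T^{p;m}_\beta$ is trivial, which follows from the local $L^2$-determinacy \eqref{e:tent_truncate_Lq}. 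Two remarks. First, since $T^{p;m}_\beta$ is only quasi-Banach for $p<1$, you should say explicitly that the Hahn--Banach separation is performed in the locally convex space $\bigl((T^{p;m}_\beta)^\ast,\weakstar\bigr)$, whose continuous dual is the canonical image of $T^{p;m}_\beta$; this is a purely algebraic fact about dual pairs and does not require local convexity of $T^{p;m}_\beta$ itself, but the point deserves a sentence. Second, your soft argument only yields topological weak${}^\ast$-density, whereas the paper's truncation produces an explicit \emph{sequence} $(f_R)$ with $\|f_R\|_{T^{\infty;m}_{-\beta,([p,1])}}\le\|f\|_{T^{\infty;m}_{-\beta,([p,1])}}$ converging weak${}^\ast$ to $f$; it is this sequential, norm-bounded approximation that is actually used in the later limiting arguments (e.g.\ Case 2 of Lemma \ref{lemma:ext_L1-1/2_tent} and the proof of Theorem \ref{thm:ic_sol}). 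So your proof establishes the lemma as stated, but the paper's version delivers the slightly stronger conclusion that the rest of the paper relies on.
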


\begin{proof}
    Let $f$ be in $T^{\infty;m}_{-\beta,([p,1])}$. For any $R>1$, define $C_R:=[1/R,R] \times B(0,R)$ and $f_R:=f\I_{C_R}$.   For any $g \in T^{p;m}_{\beta}$,
    \[ \left| \langle f-f_R,g \rangle_{L^2(\bR^{1+n}_+)} \right| \le \int_{(C_R)^c} |f||g| \lesssim \|f\|_{ T^{\infty;m}_{-\beta,([p,1])}} \|\I_{(C_R)^c} g\|_{T^{p;m}_\beta}, \]
    which tends to 0 as $R \to \infty$ by dominated convergence. This proves the convergence of $f_R$  to $f$ as $R \to \infty$ for the $\weakstar$ topology.
\end{proof}

An important tool in the study of tent spaces is change of aperture. For any $\alpha>0$ and $x \in \bR^n$, the \textit{cone} $\Gamma_\alpha^m(x)$ with \textit{aperture} $\alpha$ and \textit{homogeneity} $m$ at $x$ is given by
\begin{equation}
    \Gamma_\alpha^m(x) := \{(t,y) \in \bR^{1+n}_+: |x-y|<\alpha t^{1/m}\}.
    \label{e:def_cone}
\end{equation}
Notice that the inner integral in the $T^{p;m}_\beta$-norm is an $L^2$-norm on $\Gamma_1^m(x)$. Changing aperture gives an equivalent norm. More precisely, define the \textit{conical square function} as
\begin{equation}
    \cA^{(\alpha)}_{\beta;m}(f)(x) := \left( \int_0^\infty dt \int_{ B(x,\alpha t^{1/m}) } |t^{-\beta} f(t,y)|^2 \frac{dy}{t^{n/m}} \right)^{1/2}.
    \label{e:def_A_concial_square}
\end{equation}
For $p \in (0,\infty)$, define 
\[ \|f\|_{T^{p;m}_{\alpha,\beta}} := \| \cA^{(\alpha)}_{\beta;m}(f) \|_p. \]
For $\alpha>0$ and $p \in (0,\infty)$, one has $T^{p;m}_\beta = T^{p;m}_{\alpha,\beta}$ and
\begin{equation}
    \label{e:tent_change_ap}
 \min\{ \alpha^{\frac{n}{2}},\alpha^{\frac{n}{p}} \} \|f\|_{T^{p;m}_{\beta}} \lesssim \|f\|_{T^{p;m}_{\alpha,\beta}} \lesssim \max\{ \alpha^{\frac{n}{2}},\alpha^{\frac{n}{p}} \} \|f\|_{T^{p;m}_{\beta}}. 
\end{equation}
We shall omit $\alpha$ if $\alpha=1$, and also write $\cA_{\beta;m} := \cA^{(1)}_{\beta;m}$.

For $0<p \leq 1$, a measurable function $a$ on $\bR^{1+n}_+$ is called a \textit{$T^{p;m}_\beta$-atom}, if there exists a ball $B \subset \bR^n$ such that $\supp(a) \subset [0,r(B)^m] \times B$, and
\[ \|a\|_{L^2_\beta(\bR^{1+n}_+)} \leq |B|^{-[p,2]}. \]
Such a ball $B$ is said to be \textit{associated} to $a$. Using the notion of atomic decomposition, the following result is a verbatim adaptation of \cite[Theorem 4.9]{Auscher-McIntosh-Russ2008_HardyRieMfd}, left to the reader.

\begin{lemma}
\label{lem:T-bdd-atom_then_extension}
    Let $\beta,\gamma \in \bR$ and $0<p \leq 1$. Let $T$ be a bounded linear operator from $L^2_\beta(\bR^{1+n}_+)$ to $L^2_{\gamma}(\bR^{1+n}_+)$. If $T$ is uniformly bounded from $T^{p;m}_\beta$-atoms to $T^{p;m}_{\gamma}$, then $T$ can be extended to a bounded linear operator from $T^{p;m}_\beta$ to $T^{p;m}_{\gamma}$.
\end{lemma}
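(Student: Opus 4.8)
The plan is to combine the atomic decomposition of tent spaces for $0<p\le 1$ with the $p$-subadditivity of the quasi-norm $\|\cdot\|_{T^{p;m}_\gamma}$, using the $L^2$-boundedness hypothesis solely to make the termwise application of $T$ rigorous. First I would work on the dense subspace $L^2_\rmc(\bR^{1+n}_+) \subset T^{p;m}_\beta$ (density recalled above), establish the a priori estimate $\|Tf\|_{T^{p;m}_\gamma} \lesssim \|f\|_{T^{p;m}_\beta}$ for $f$ there, and then extend by density, which is legitimate since $T^{p;m}_\gamma$ is a complete quasi-Banach space and the extension will agree with $T$ on $L^2_\rmc$ by construction.

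For the a priori estimate, fix $f \in L^2_\rmc(\bR^{1+n}_+)$, which also lies in $L^2_\beta(\bR^{1+n}_+)$ since compact support in the open half-space keeps $t^{-2\beta}$ bounded. I would then invoke the atomic decomposition: there exist $T^{p;m}_\beta$-atoms $a_j$ and scalars $\lambda_j$ with $f=\sum_j \lambda_j a_j$ and $\sum_j |\lambda_j|^p \lesssim \|f\|_{T^{p;m}_\beta}^p$, where the crucial refinement (valid because $f \in L^2_\beta$) is that this series converges not only in $T^{p;m}_\beta$ but also in $L^2_\beta(\bR^{1+n}_+)$. Since $T$ is bounded from $L^2_\beta$ to $L^2_\gamma$, the partial sums $S_N:=\sum_{j\le N}\lambda_j Ta_j$ converge to $Tf$ in $L^2_\gamma(\bR^{1+n}_+)$, hence a.e. along a subsequence $(S_{N_k})_k$. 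Because the conical square function $\cA_{\gamma;m}$ is subadditive (it is an $L^2$-norm over cones), for $p\le 1$ one has
\[ \|S_N\|_{T^{p;m}_\gamma}^p \le \sum_{j\le N} |\lambda_j|^p \,\|Ta_j\|_{T^{p;m}_\gamma}^p \le C^p \sum_j |\lambda_j|^p, \]
using the uniform bound $\|Ta_j\|_{T^{p;m}_\gamma}\le C$ on atoms. Two applications of Fatou's lemma then close the estimate: first to the inner cone integral, giving $\cA_{\gamma;m}(Tf)(x)\le \liminf_k \cA_{\gamma;m}(S_{N_k})(x)$ for a.e. $x$, and then to the outer $L^p$ integral, yielding $\|Tf\|_{T^{p;m}_\gamma}^p \le \liminf_k \|S_{N_k}\|_{T^{p;m}_\gamma}^p \lesssim \sum_j |\lambda_j|^p \lesssim \|f\|_{T^{p;m}_\beta}^p$.

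The main obstacle is precisely the interchange of $T$ with the infinite sum. The raw Coifman--Meyer--Stein decomposition converges a priori only in the $T^{p;m}_\beta$ quasi-norm, on which $T$ is not yet known to act, so one must exploit that for $f \in T^{p;m}_\beta \cap L^2_\beta$ the decomposition can be arranged to converge in $L^2_\beta$ as well. This is exactly where the $L^2$-boundedness hypothesis is used: it converts $L^2_\beta$-convergence of the $a_j$ into $L^2_\gamma$-convergence of the images $Ta_j$, and hence into a.e. convergence of a subsequence, so that $Tf=\lim_k S_{N_k}$ a.e. is secured. Once that is in hand, the lower semicontinuity of the quasi-norm under a.e. convergence (the Fatou step above) finishes the a priori bound, and the density extension is then routine.
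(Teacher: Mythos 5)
Your argument is correct and is essentially the proof the paper has in mind: the paper defers to \cite[Theorem 4.9]{Auscher-McIntosh-Russ2008_HardyRieMfd}, whose proof is exactly this scheme (atomic decomposition converging in both $T^{p;m}_\beta$ and $L^2_\beta$ for $f$ in the dense class, termwise application of $T$ via the assumed $L^2_\beta\to L^2_\gamma$ boundedness, Fatou for the quasi-norm, then extension by density). You correctly identify the one genuinely delicate point, namely that the decomposition must be arranged to converge in $L^2_\beta$ so that $Tf=\lim_k S_{N_k}$ is meaningful, which is precisely where the $L^2$ hypothesis enters.
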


%%%%% Main results
\subsection{Main results}
\label{ssec:results_sio_tent}

In this section, we state our main results on the extension of SIOs and their adjoints to tent spaces $T^{p;m}_\beta$. This is divided into four statements depending on the type of SIOs and the range of $p$. Proofs are postponed to Section \ref{ssec:proof_sio_tent}.

Throughout the section, $\kappa \in \bR$ and $m \in \bN$ are fixed constants.

The first statement concerns the extension of operators in $\sio^{\kappa+}_{m,q,M}$ to $T^{p;m}_\beta$ for $p \leq 2$. Define 
\begin{equation}
    p_M:=\frac{2n}{n+2mM}, \quad p_q(\beta):= \frac{2nq}{2n+(2\beta+1)mq},
    \label{e:def_p_M,p_q}
\end{equation}
and
\begin{equation}
    M_c(\kappa,q):=\max\left\{ \frac{n}{2m},M_{\kappa,q} \right\},
    \label{e:def_M_c(kappa,q)}
\end{equation}
where $M_{\kappa,q}$ is defined in \eqref{e:def_M_kappa}. When $M>M_c(\kappa,q)$ then $M>M_{\kappa,q}$ so the class $\sio^{\kappa+}_{m,q,M}$ is well-defined, and also $p_M<1$.

\begin{prop}
\label{prop:sio_tent_p<2}
    Let $1 \leq q \leq 2, M_c(\kappa,q) < M \leq \infty$, and $\beta>-1/2$. Let $T$ be in $\sio^{\kappa+}_{m,q,M}$, and assume that $T$ is a bounded operator from $L^2_\beta(\bR^{1+n}_+)$ to $L^2_{\beta+\kappa}(\bR^{1+n}_+)$. 
    Then $T$ extends to a bounded operator from $T^{p;m}_\beta$ to $T^{p;m}_{\beta+\kappa}$ when $\max\{p_M,p_q(\beta)\}<p \leq 2$, if one of the following conditions holds:
    \begin{enumerate}
        \item 
        \label{case:large_q'_sio_tent_p<2}
        $q'>\frac{2n}{m(2\beta+1)}$ (or equivalently $p_q(\beta)<1$);

        \item
        \label{case:small_q'_sio_tent_p<2}
        {$2 \leq q'\leq \frac{2n}{m(2\beta+1)}$} (or equivalently $p_q(\beta) \geq 1$) and $M>\frac{n}{mq}$. 
    \end{enumerate}
\end{prop}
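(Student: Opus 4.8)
The plan is to anchor the argument at $p=2$, where the assertion is exactly the hypothesis since $T^{2;m}_\beta = L^2_\beta(\bR^{1+n}_+)$, and to reach the lower endpoints through a uniform atomic bound, interpolating to fill the intermediate range. Note first that $M>M_c(\kappa,q)$ guarantees both $M>M_{\kappa,q}$ (so the representation of $T$ against its kernel is available by Lemmas \ref{lem:ext_L2_kappa>0} and \ref{lem:sk_of_si_kappa<0}) and $M>\frac{n}{2m}$, i.e. $p_M<1$, so the atomic range is nonempty. For $p\le 1$ I would reduce to a uniform estimate on atoms via Lemma \ref{lem:T-bdd-atom_then_extension}, which applies precisely because $T\colon L^2_\beta\to L^2_{\beta+\kappa}$ is assumed bounded.

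Fix a $T^{p;m}_\beta$-atom $a$ with associated ball $B=B(x_B,r)$, so $\supp a\subset[0,r^m]\times B$ and $\|a\|_{L^2_\beta}\le|B|^{-[p,2]}$. I would estimate $\|Ta\|_{T^{p;m}_{\beta+\kappa}}^p=\int_{\bR^n}\cA_{\beta+\kappa;m}(Ta)(x)^p\,dx$ by splitting $\bR^n=4B\cup\bigcup_{j\ge2}C_j$ with annuli $C_j=2^{j+1}B\setminus 2^jB$. On the local part, Hölder's inequality (exponent $2/p$, legitimate since $p\le 1<2$) and the $L^2_\beta\to L^2_{\beta+\kappa}$ bound give $\int_{4B}\cA_{\beta+\kappa;m}(Ta)^p\lesssim|B|^{1-p/2}\|Ta\|_{L^2_{\beta+\kappa}}^p\lesssim|B|^{1-p/2}\|a\|_{L^2_\beta}^p\le 1$, uniformly in $a$. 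The main work is the far part: for $x\in C_j$ and $(t,y)\in\Gamma_1^m(x)$ I would write $Ta(t,y)=\int_0^{\min(t,r^m)}(K(t,s)a(s))(y)\,ds$ and bound $\|\I_{B(x,t^{1/m})}K(t,s)\I_B a(s)\|_2$ using the $L^q$–$L^2$ off-diagonal decay of type $(\kappa,m,M)$, which is available because $q\le 2$, together with $\dist(B(x,t^{1/m}),B)\gtrsim 2^jr-t^{1/m}$ and the Hölder gain $\|a(s)\|_q\le|B|^{[q,2]}\|a(s)\|_2$. Splitting the $t$-integral at $t^{1/m}\sim 2^jr$ produces two regimes: where $t^{1/m}\lesssim 2^jr$ the factor $\bigl(1+(2^jr)^m/|t-s|\bigr)^{-M}$ is effective, and after integrating the time singularity $|t-s|^{-1+\kappa-\frac{n}{m}[q,2]}$ against the weight, summing over the annulus volumes $|C_j|\sim(2^jr)^n$ and using change of aperture \eqref{e:tent_change_ap}, geometric convergence of the $j$-sum forces $p>p_M$; where $t^{1/m}\gtrsim 2^jr$ there is no spatial gain and the time-weight integral converges exactly when $p>p_q(\beta)$. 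Tracking these exponents yields the uniform atomic bound on $\max\{p_M,p_q(\beta)\}<p\le 1$.

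To fill $1<p\le 2$ I would interpolate: tent spaces with a fixed weight interpolate (\cite{Coifman-Meyer-Stein85TentSpaces,amenta18weighted}), so interpolating an atomic bound at a lower exponent against the $p=2$ bound gives boundedness on $T^{p;m}_\beta\to T^{p;m}_{\beta+\kappa}$ throughout the relevant range. In Case \eqref{case:large_q'_sio_tent_p<2}, where $p_q(\beta)<1$, the atomic anchor may be chosen at some $p_0\le 1$ arbitrarily close to $\max\{p_M,p_q(\beta)\}$, so interpolation with $p=2$ covers the whole interval $(\max\{p_M,p_q(\beta)\},2]$.

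The hard part will be Case \eqref{case:small_q'_sio_tent_p<2}, where $p_q(\beta)\ge 1$ places the target range $(p_q(\beta),2]$ strictly above $1$, so the atomic decomposition is unavailable and there is no endpoint at $p\le 1$ to interpolate from. Here one must extract a genuine lower interpolation endpoint directly from the off-diagonal decay, and it is precisely this step that requires the extra hypothesis $M>\frac{n}{mq}$: redoing the far-part analysis with no atom at hand, the spatial decay must now dominate not only the annulus volume but also the Hölder factor coming from the $L^q$ passage, and the resulting control reduces to a Hardy–Littlewood maximal estimate at an exponent tied to $q$, whose boundedness holds exactly when $mMq>n$. I would establish a (restricted weak-type) bound at $p=p_q(\beta)$ this way and then real-interpolate with the $L^2$ endpoint to recover the open range $(p_q(\beta),2]$. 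The remainder of the argument — the local-plus-annuli split anchored by the $L^2$ hypothesis and change of aperture — is routine bookkeeping once the two governing constraints $p>p_M$ and $p>p_q(\beta)$ are correctly isolated.
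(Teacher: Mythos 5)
Your treatment of Case \eqref{case:large_q'_sio_tent_p<2} is essentially the paper's: a uniform bound on atoms (local part via the assumed $L^2_\beta\to L^2_{\beta+\kappa}$ boundedness, far part via off-diagonal decay, with the constraints $p>p_M$ and $p>p_q(\beta)$ emerging exactly where you say), followed by Lemma \ref{lem:T-bdd-atom_then_extension} and interpolation with $p=2$. The paper organizes this slightly differently, first splitting $T=T_0+T_{\sing}$ at $s=t/2$, but for Case \eqref{case:large_q'_sio_tent_p<2} that is a bookkeeping choice rather than a substantive difference.

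Case \eqref{case:small_q'_sio_tent_p<2} is where your argument has a genuine gap. The mechanism you describe — an estimate by a Hardy--Littlewood maximal function at exponent $q$, requiring $mMq>n$ — is indeed what the paper proves (Lemma \ref{lemma:extra_pointwise_est}: $\cA_{\beta+\kappa;m}(T_zf)\lesssim\cV_\beta(\cM_qf)$ pointwise), but it cannot yield a weak-type or restricted weak-type bound at $p=p_q(\beta)$. Since $\cM_q(f)=\cM(|f|^q)^{1/q}$, the maximal estimate is effective on $L^p$ only for $p>q$ (with at best weak type at $p=q$), and $p_q(\beta)=\frac{2nq}{2n+(2\beta+1)mq}<q$ because $2\beta+1>0$. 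So real interpolation from this endpoint only recovers the range $(q,2]$, strictly smaller than the claimed $(p_q(\beta),2]$. The missing idea is the analytic family $T_z(f)(t,y)=\int_0^{t/2}(s/t)^z(K(t,s)f(s))(y)\,ds$ of \eqref{e:def_T_z}: for $\Re(z)>\frac{n}{mq'}-\beta-\frac12$ the extra decay $(s/t)^{\Re z}$ restores a uniform atomic bound at $p=1$ even though $p_q(\beta)\ge1$, while for $-\beta-\frac12<\Re(z)<0$ the maximal/extrapolation argument gives boundedness on $T^{q_0;m}_\beta$ for $q<q_0\le2$; Stein interpolation at $\Re z=0$ between these two lines is what closes the gap from $q$ down to $p_q(\beta)$. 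Note also that both of these mechanisms rely on $s\le t/2$ (so that $t-s\sim t$ and the weight $(s/t)^z$ is harmless), which is why the preliminary split $T=T_0+T_{\sing}$ is not optional in Case \eqref{case:small_q'_sio_tent_p<2}: the singular piece $\int_{t/2}^t$ must be handled separately by the atomic argument on the full range $p_M<p\le2$, using the assumed $L^2_\beta\to L^2_{\beta+\kappa}$ bound, before the analytic-family machinery is applied to the regular piece.
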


The next statement establishes the extension of operators in $\sio^{\kappa-}_{m,q,M}$ to $T^{p;m}_\beta$ when $p \leq 2$.

\begin{prop}
\label{prop:sio-_tent_p<2}
    Let $1 \leq q \leq 2, M_c(\kappa,q) < M \leq \infty$, and $\beta<1/2$. Let $T$ be in $\sio^{\kappa-}_{m,q,M}$, and assume that $T$ is a bounded operator from $L^2_{\beta-\kappa}(\bR^{1+n}_+)$ to $L^2_{\beta}(\bR^{1+n}_+)$.
    Then $T$ extends to a bounded operator from $T^{p;m}_{\beta-\kappa}$ to $T^{p;m}_{\beta}$ when $p_M < p \leq 2$.
\end{prop}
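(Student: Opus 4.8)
The plan is to reduce to the two endpoints $p=2$ and $p\le 1$ and then interpolate. At $p=2$ there is nothing to prove: since $T^{2;m}_\gamma = L^2_\gamma(\bR^{1+n}_+)$, the asserted bound is exactly the standing hypothesis that $T\colon L^2_{\beta-\kappa}\to L^2_\beta$ is bounded. For $p_M<p\le 1$ I would apply Lemma~\ref{lem:T-bdd-atom_then_extension}, which, given this $L^2$ boundedness, reduces the extension to a uniform estimate $\|Ta\|_{T^{p;m}_\beta}\lesssim 1$ over all $T^{p;m}_{\beta-\kappa}$-atoms $a$. Granting both, the intermediate range $1<p<2$ follows by complex interpolation of the tent space scale between $p=1$ and $p=2$; note that $M>M_c(\kappa,q)\ge n/(2m)$ forces $p_M<1$, so $p=1$ is admissible.

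The feature that makes the $-$ type cleaner than the $+$ type of Proposition~\ref{prop:sio_tent_p<2}, and that accounts for the absence of any secondary exponent $p_q(\beta)$ or case distinction, is a localization in time. If $a$ is supported in $[0,r(B)^m]\times B$, then, because $Ta(t,\cdot)=\int_t^\infty K(t,s)a(s)\,ds$ integrates only over $s>t$ while $a(s)=0$ for $s>r(B)^m$, the function $Ta$ vanishes for $t>r(B)^m$. Thus $Ta$ lives in the same time slab as $a$, there is no forward-in-time tail to control, and every estimate below takes place at the scale of $B$.

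For the atomic bound I would split $\bR^n$ into $C_0=4B$ and the annuli $C_j=2^{j+2}B\setminus 2^{j+1}B$ for $j\ge 1$, and estimate $\int_{C_j}\cA_{\beta;m}(Ta)(x)^p\,dx$ separately. On each $C_j$ I would first apply Hölder's inequality in $x$ with exponent $2/p\ge 1$, reducing matters to the $L^2_\beta$-mass of $Ta$ over the truncated tent above $C_j$, at the cost of the volume factor $|C_j|^{1-p/2}\eqsim(2^{jn}|B|)^{1-p/2}$. For $j=0$ this $L^2_\beta$-mass is bounded by $\|Ta\|_{L^2_\beta}\lesssim\|a\|_{L^2_{\beta-\kappa}}\le |B|^{-[p,2]}$, and since $p\le 2$ the powers of $|B|$ cancel, giving $\int_{4B}\cA_{\beta;m}(Ta)^p\lesssim 1$. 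For $j\ge 1$, a cone point $(t,y)$ over $x\in C_j$ with $t\le r(B)^m$ has $y$ at distance $\gtrsim 2^j r(B)$ from $B\supseteq\supp a(s)$, so $(t,y)\in\pi(a)^c$ and the integral representation applies; feeding the pair $(B(x,t^{1/m}),B)$ into the $L^q$--$L^2$ off-diagonal decay of the kernel, and using $\dist^m\eqsim 2^{jm}r(B)^m$ together with $|t-s|\le r(B)^m$, yields a spatial gain $\lesssim 2^{-jmM}$. The decisive point is that Hölder in $x$, rather than a crude pointwise supremum over $C_j$, replaces the volume power $2^{jn}$ by $2^{jn(1-p/2)}$; balancing this against $(2^{-jmM})^p$ produces convergence of $\sum_j$ exactly when $p>p_M=\tfrac{2n}{n+2mM}$.

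The main obstacle is the remaining one-dimensional bookkeeping in the $(t,s)$ variables. After extracting the spatial decay one is left, for $j\ge 1$, with the weighted Hardy-type quantity $\big(\int_0^{r(B)^m} t^{-2\beta}(\int_t^{r(B)^m}(s-t)^{a'}\|a(s)\|_2\,ds)^2\,dt\big)^{1/2}$ with $a'=-1+\kappa-\tfrac{n}{m}[q,2]+M$, which must be controlled by $\|a\|_{L^2_{\beta-\kappa}}$ through a Schur/Minkowski estimate in the spirit of Lemma~\ref{lem:ext_L2_kappa>0} and Lemma~\ref{lemma:sk_norm_schur}. Integrability near $s=t$ requires $a'>-1$, i.e.\ $M>M_{\kappa,q}$, while convergence of the weighted integral near $t=0$ is precisely where the hypothesis $\beta<1/2$ enters, dually to the role of $\beta>-1/2$ for the $+$ type. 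A scaling check identifies this quantity with $r(B)^{mM}|B|^{-[q,2]-[p,2]}$ up to constants; combined with the Hölder passage $\|a(s)\|_q\le|B|^{[q,2]}\|a(s)\|_2$ and the factor $2^{-jmM}r(B)^{-mM}$, all powers of $|B|$ again cancel, leaving $\int_{C_j}\cA_{\beta;m}(Ta)^p\lesssim 2^{-j(pmM-n(1-p/2))}$. Summing the geometric series over $j$ yields the uniform atomic estimate, and interpolation then delivers the full range $p_M<p\le 2$.
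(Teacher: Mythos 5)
Your proposal is correct and follows essentially the same route as the paper, which simply defers to the atomic-decomposition argument of \cite[Proposition 3.7]{auscher12maxreg}: the support of $Ta$ in the time slab $[0,r(B)^m]$, the annular decomposition with H\"older in $x$ producing the threshold $p_M$, the Schur estimate in $(t,s)$ where $\beta<1/2$ enters, and the cancellation of all powers of $|B|$ are exactly the intended adaptation to general $\kappa$. The details check out (in particular $M>M_c(\kappa,q)$ guarantees both $a'>-1$ and $a'+1-\kappa>0$ in your Schur kernel), so there is nothing further to add.
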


The third statement describes the extension of operators in $\sio^{\kappa+}_{m,q,M}$ on $T^{p;m}_\beta$ when $p \geq 2$.

\begin{cor}
\label{cor:sio_tent_p>2}
    Let $2 \leq q \leq \infty, M_c(\kappa,q) < M \leq \infty$, and $\beta>-1/2$. Let $T$ be in $\sio^{\kappa+}_{m,q,M}$, and assume that $T$ is a bounded operator from $L^2_{\beta}(\bR^{1+n}_+)$ to $L^2_{\beta+\kappa}(\bR^{1+n}_+)$. Then $T$ extends to a bounded operator 
    \begin{itemize}
        \item from $T^{p;m}_\beta$ to $T^{p;m}_{\beta+\kappa}$, when $2 \leq p \leq \infty$, 
        \item from $T^{\infty;m}_{\beta,([p,1])}$ to $T^{\infty;m}_{\beta+\kappa,([p,1])}$, when $p_M < p \leq 1$. 
    \end{itemize}
\end{cor}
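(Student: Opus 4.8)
The plan is to deduce both assertions from Proposition~\ref{prop:sio-_tent_p<2} applied to the adjoint $T^\ast$, combined with the tent space dualities recalled in Section~\ref{ssec:tent}. First I would record the structural facts that make the duality work. By Proposition~\ref{prop:sio_duality}, $T^\ast \in \sio^{\kappa-}_{m,q',M}$ with kernel $K^\ast$, and since $2 \le q \le \infty$ we have $1 \le q' \le 2$, which is exactly the range required by Proposition~\ref{prop:sio-_tent_p<2}. The constant is unchanged under $q \mapsto q'$: since $|[q,2]| = |1/q - 1/2| = |[q',2]|$ one has $M_{\kappa,q} = M_{\kappa,q'}$, hence $M_c(\kappa,q) = M_c(\kappa,q')$, so the hypothesis $M > M_c(\kappa,q)$ is precisely what Proposition~\ref{prop:sio-_tent_p<2} needs for $T^\ast$. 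Finally, taking the adjoint with respect to the unweighted $L^2(\bR^{1+n}_+)$-pairing of the assumed bound $T: L^2_\beta(\bR^{1+n}_+) \to L^2_{\beta+\kappa}(\bR^{1+n}_+)$ shows $T^\ast: L^2_{-\beta-\kappa}(\bR^{1+n}_+) \to L^2_{-\beta}(\bR^{1+n}_+)$ is bounded.

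With these in hand, since $-\beta < 1/2$ is equivalent to the standing hypothesis $\beta > -1/2$, Proposition~\ref{prop:sio-_tent_p<2} applies to $T^\ast$ with weight $-\beta$ (so that its source weight is $-\beta-\kappa$) and yields that $T^\ast$ extends to a bounded operator $T^{r;m}_{-\beta-\kappa} \to T^{r;m}_{-\beta}$ for every $p_M < r \le 2$. I would then observe that, because $M > M_c(\kappa,q) \ge n/(2m)$, one has $2mM > n$ and hence $p_M < 1$; thus the lower constraint $p_M < r$ is automatic as soon as $r > 1$.

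Next I would dualize. For $2 < p < \infty$, set $r = p' \in (1,2)$, which satisfies $p_M < 1 < r \le 2$; the Banach-space adjoint of the bound on $T^\ast: T^{p';m}_{-\beta-\kappa} \to T^{p';m}_{-\beta}$ is a bounded map $T^{p;m}_\beta = (T^{p';m}_{-\beta})^\ast \to (T^{p';m}_{-\beta-\kappa})^\ast = T^{p;m}_{\beta+\kappa}$. This adjoint agrees with $T$ on the dense subspace $L^2_\rmc(\bR^{1+n}_+)$ through the identity $\langle Tf, g\rangle_{L^2} = \langle f, T^\ast g\rangle_{L^2}$ supplied by Proposition~\ref{prop:sio_duality}, so by density it is the desired extension (the case $p=2$ being the hypothesis itself, via $T^{2;m}_\beta = L^2_\beta(\bR^{1+n}_+)$). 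For the remaining ranges I use the endpoint duality $(T^{r;m}_{-\beta})^\ast = T^{\infty;m}_{\beta,([r,1])}$ valid for $0 < r \le 1$: taking adjoints of $T^\ast: T^{r;m}_{-\beta-\kappa} \to T^{r;m}_{-\beta}$ for $p_M < r \le 1$ gives $T: T^{\infty;m}_{\beta,([r,1])} \to T^{\infty;m}_{\beta+\kappa,([r,1])}$. The value $r = 1$, for which $[1,1]=0$, is exactly the endpoint $p = \infty$ of the first assertion (identifying $T^{\infty;m}_{\beta+\kappa}$ with the Carleson space $T^{\infty;m}_{\beta+\kappa,(0)}$), while $p_M < r < 1$ furnishes the second assertion.

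The delicate point — and the one I expect to be the main obstacle — is making the $p = \infty$ and $p \le 1$ extensions rigorous, since there $L^2_\rmc(\bR^{1+n}_+)$ is \emph{not} norm-dense in the target $T^\infty$ space. Here I would not extend by norm-continuity but instead define $Tf$, for $f \in T^{\infty;m}_{\beta,([r,1])} = (T^{r;m}_{-\beta})^\ast$, as the functional $g \mapsto \langle f, T^\ast g\rangle_{L^2}$ on $T^{r;m}_{-\beta-\kappa}$; the bound on $T^\ast$ shows this functional lies in $T^{\infty;m}_{\beta+\kappa,([r,1])}$ with the desired norm control. Consistency with the kernel definition of $T$ on $L^2_\rmc(\bR^{1+n}_+)$ follows once more from the adjoint relation of Proposition~\ref{prop:sio_duality}, and the weak${}^\ast$-density of $L^2_\rmc(\bR^{1+n}_+)$ provided by Lemma~\ref{lemma:L2c-Tinfty_weak*dense} identifies this functional as the weak${}^\ast$ limit of the genuine truncated values, confirming that it is the correct extension.
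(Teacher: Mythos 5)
Your proposal is correct and follows essentially the same route as the paper: pass to $T^\ast\in\sio^{\kappa-}_{m,q',M}$ via Proposition~\ref{prop:sio_duality}, note $M_c(\kappa,q)=M_c(\kappa,q')$ and the dual $L^2$-weighted bound, apply Proposition~\ref{prop:sio-_tent_p<2} to $T^\ast$ with weight $-\beta$, and dualize back using the tent space duality pairings, defining $Tf(g):=\langle f,T^\ast g\rangle$ in the non-reflexive range. The paper only writes out the case $p_M<p\le 1$ and leaves the rest as a "formal duality argument," so your explicit treatment of $2\le p\le\infty$ (via $r=p'$, with $r=1$ giving the $p=\infty$ endpoint) and of the consistency/weak${}^\ast$-density issue simply fills in details the authors delegate to the reader and to their remark that extension by duality agrees with extension by density.
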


The last statement is concerned with the extension of operators in $\sio^{\kappa-}_{m,q,M}$ on $T^{p;m}_\beta$ when $p \geq 2$.

\begin{cor}
    \label{cor:sio-_tent_p>2}
    Let $2 \leq q \leq \infty, M_c( \kappa, q) < M \leq \infty$, and $\beta<1/2$. Let $T$ be in $\sio^{\kappa-}_{m,q,M}$, and assume that $T$ is a bounded operator from $L^2_{\beta-\kappa}(\bR^{1+n}_+)$ to $L^2_{\beta}(\bR^{1+n}_+)$.
    \begin{enumerate}
        \item 
        \label{case:large_q_sio-_tent}
        If $q>\frac{2n}{m(2\beta+1)}$ (or equivalently $p_{q'}(\beta)<1$), then $T$ extends to a bounded operator
        \begin{itemize}
            \item from $T^{p;m}_{\beta-\kappa}$ to $T^{p;m}_{\beta}$, when $2 \leq p \leq \infty$, 
            \item from $T^{\infty;m}_{\beta-\kappa,([p,1])}$ to $T^{\infty;m}_{\beta,([p,1])}$, when $\max\{p_M,p_{q'}(\beta)\} < p \leq 1$.
        \end{itemize}

        \item 
        \label{case:small_q_sio-_tent}
        If {$2 \leq q \leq \frac{2n}{m(2\beta+1)}$} (or equivalently $p_{q'}(\beta) \geq 1$) and $M>\frac{n}{mq'}$, then $T$ extends to a bounded operator from $T^{p;m}_{\beta-\kappa}$ to $T^{p;m}_{\beta}$ when $2 \leq p < (p_{q'}(\beta))'$.
    \end{enumerate}
\end{cor}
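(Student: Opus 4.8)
The plan is to deduce this from the plus-type statement Proposition \ref{prop:sio_tent_p<2} by duality, exactly as Corollary \ref{cor:sio_tent_p>2} is deduced from Proposition \ref{prop:sio-_tent_p<2}. The first step is to pass to the $L^2(\bR^{1+n}_+)$-adjoint. By Proposition \ref{prop:sio_duality}, an operator $T\in\sio^{\kappa-}_{m,q,M}$ has adjoint $T^\ast\in\sio^{\kappa+}_{m,q',M}$ with kernel $K^\ast$, and the assumed boundedness of $T$ from $L^2_{\beta-\kappa}(\bR^{1+n}_+)$ to $L^2_\beta(\bR^{1+n}_+)$ dualizes to boundedness of $T^\ast$ from $L^2_{-\beta}(\bR^{1+n}_+)$ to $L^2_{-\beta+\kappa}(\bR^{1+n}_+)$. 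Because $2\le q\le\infty$ gives $1\le q'\le 2$, and because $|[q',2]|=|[q,2]|$ yields $M_c(\kappa,q')=M_c(\kappa,q)$ so that $M>M_c(\kappa,q')$, the operator $T^\ast$ meets the hypotheses of Proposition \ref{prop:sio_tent_p<2} with index $q'$. Its two alternatives there — according to whether the relevant $p_{q'}$-threshold is $<1$ or $\ge1$, the latter carrying the requirement $M>\frac{n}{mq'}$ — are precisely what yield the dichotomy between Case \ref{case:large_q_sio-_tent} and Case \ref{case:small_q_sio-_tent} of the statement.

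For the reflexive range $2\le p<\infty$ I would use the tent-space duality $(T^{p;m}_\gamma)^\ast=T^{p';m}_{-\gamma}$ valid for $1<p<\infty$: the desired bound $T\colon T^{p;m}_{\beta-\kappa}\to T^{p;m}_\beta$ is equivalent to $T^\ast\colon T^{p';m}_{-\beta}\to T^{p';m}_{-\beta+\kappa}$ with $1<p'\le 2$, which Proposition \ref{prop:sio_tent_p<2} supplies on the conjugate of its admissible $p'$-interval; the endpoint $p=p'=2$ is just the assumed $L^2$-bound. In Case \ref{case:large_q_sio-_tent} the proposition covers all of $1<p'\le 2$, hence the full range $2\le p<\infty$; in Case \ref{case:small_q_sio-_tent} its lower threshold for $p'$ transforms, upon conjugation, into the stated upper bound $p<(p_{q'}(\beta))'$.

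The non-reflexive endpoints I would handle by realizing $T$ as an adjoint rather than as a norm-continuous extension, since $L^2_\rmc(\bR^{1+n}_+)$ fails to be norm-dense in the relevant $T^{\infty;m}_{\cdot,([p,1])}$. Using the identification $(T^{p;m}_\gamma)^\ast=T^{\infty;m}_{-\gamma,([p,1])}$ for $0<p\le 1$, the bound from $T^{\infty;m}_{\beta-\kappa,([p,1])}$ to $T^{\infty;m}_{\beta,([p,1])}$ in Case \ref{case:large_q_sio-_tent} is the adjoint of the atomic-range bound $T^\ast\colon T^{p;m}_{-\beta}\to T^{p;m}_{-\beta+\kappa}$ given by Proposition \ref{prop:sio_tent_p<2} on $\max\{p_M,p_{q'}(\beta)\}<p\le 1$, and the endpoint $p=\infty$ is the adjoint of the $p=1$ Hardy-space bound. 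In each case I would first check the pairing identity $\langle Tf,g\rangle_{L^2(\bR^{1+n}_+)}=\langle f,T^\ast g\rangle_{L^2(\bR^{1+n}_+)}$ against the weak${}^\ast$-dense subspace $L^2_\rmc(\bR^{1+n}_+)$ of Lemma \ref{lemma:L2c-Tinfty_weak*dense}, and then pass to the weak${}^\ast$ limit.

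I expect this last, non-reflexive, step to be the main obstacle. As $L^2_\rmc(\bR^{1+n}_+)$ is not norm-dense in $T^{\infty;m}_{\cdot,([p,1])}$, one cannot extend $T$ by continuity and must define it as a genuine adjoint, then verify that the kernel representation \eqref{e:def_sio-_T-representation} survives the weak${}^\ast$ passage to the limit; the integrability of the kernel on the disjoint supports in the pairing, quantified by Lemma \ref{lemma:sk_norm_schur}, is the key input that legitimizes this. A secondary point requiring care is to confirm that $T^\ast$ falls under the correct alternative of Proposition \ref{prop:sio_tent_p<2} in accordance with the $q$-dichotomy asserted here.
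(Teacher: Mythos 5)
Your overall strategy coincides with the paper's: the authors prove this corollary exactly by the ``formal duality argument from Proposition \ref{prop:sio_tent_p<2}'' that you describe, leaving the details to the reader, and your treatment of the non-reflexive endpoints (realizing $T$ as the adjoint of $T^\ast$ on $(T^{p;m}_{-\beta})^\ast$ rather than extending by norm-density, with the $p=\infty$ case dual to the $p=1$ case) is the intended mechanism --- indeed it is spelled out for the companion Corollary \ref{cor:sio_tent_p>2}. One small simplification: you do not need to re-verify the kernel representation \eqref{e:def_sio-_T-representation} for the weak${}^\ast$ extension; the corollary only asserts boundedness of \emph{an} extension, and defining $Tf$ as the bounded functional $g\mapsto\langle f,T^\ast g\rangle$ already does the job.

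There is, however, one step you assert without computing, and it is precisely the step that does not come out as you claim. The source space of $T^\ast$ is the dual of the \emph{target} $T^{p;m}_{\beta}$ of $T$, namely $T^{p';m}_{-\beta}$; equivalently, dualizing $T\colon L^2_{\beta-\kappa}\to L^2_{\beta}$ gives $T^\ast\colon L^2_{-\beta}\to L^2_{-\beta+\kappa}$. So Proposition \ref{prop:sio_tent_p<2} must be applied to $T^\ast\in\sio^{\kappa+}_{m,q',M}$ with weight $\gamma=-\beta$ (which is $>-1/2$ exactly because $\beta<1/2$). Its dichotomy then reads $q=(q')'\gtrless \frac{2n}{m(2\gamma+1)}=\frac{2n}{m(1-2\beta)}$, its lower threshold is $\max\{p_M,p_{q'}(-\beta)\}$ with $p_{q'}(-\beta)=\frac{2nq'}{2n+(1-2\beta)mq'}$, and Case \eqref{case:small_q_sio-_tent} conjugates to $2\le p<(p_{q'}(-\beta))'$. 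These agree with the printed $\frac{2n}{m(2\beta+1)}$, $p_{q'}(\beta)$ and $(p_{q'}(\beta))'$ only when $\beta=0$, and for $\beta\ne 0$ the two versions genuinely differ (e.g.\ for $\beta$ close to $1/2$ the printed Case \eqref{case:large_q_sio-_tent} is far more generous than what the duality delivers). So either your argument proves the $-\beta$ version of the statement rather than the printed one, or a different proof is needed; in any case the sentence claiming the alternatives of Proposition \ref{prop:sio_tent_p<2} ``are precisely what yield the dichotomy'' must be replaced by the explicit computation above. Note that no such issue arises for Corollary \ref{cor:sio_tent_p>2}, because the range $p_M<p\le 2$ in Proposition \ref{prop:sio-_tent_p<2} is independent of $\beta$.
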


\begin{remark}
    \label{rem:compareAKMP}
Let us compare with \cite{auscher12maxreg}.
    \begin{itemize}
        \item We treat the new cases $\kappa \ne 0$. The constant $M_c(\kappa,q)$ depends on $\kappa$, while the exponents $p_M$ and $p_q(\beta)$ do not.
        
        \item There is an improvement in Proposition~\ref{prop:sio_tent_p<2}~\eqref{case:small_q'_sio_tent_p<2} and hence Corollary~\ref{cor:sio-_tent_p>2}~\eqref{case:small_q_sio-_tent} when $q\ne 2$. The range of $p$ is larger  but at the expense of the extra decay on  $M$. This range for $p$ was obtained in the case of a maximal regularity operator when $\kappa=0$ (unpublished communication of Y. Huang). This extra decay only comes from our argument and we do not know how to remove it. But it should be the case, as a discontinuity on the required lower control for $M$ with respect to $q$ appears at $q'=\frac{2n}{m(2\beta+1)}$ by looking at the two cases.
    \end{itemize}
\end{remark}

\begin{remark}
    \label{rem:sio_tent_p<2_L2beta_ext_necessity}
    For $\kappa>0$, the bounded extensions of $\sio^{\kappa+}_{m,q,M}$-operators from $L^2_\beta(\bR^{1+n}_+)$ to $L^2_{\beta+\kappa}(\bR^{1+n}_+)$ are automatic, see Lemma \ref{lem:ext_L2_kappa>0}. For $\kappa = 0$, it was asserted in \cite[Theorem 2.2]{auscher12maxreg} but in fact the proof has a gap. It is unclear whether {the operators in $\sio^{0+}_{m,q,M}$ are bounded on $L^2_\beta(\bR^{1+n}_+)$}, so it needs to be assumed, and also for $\kappa<0$. However, it is true for maximal regularity operators, see \cite[Theorem 1.3]{Auscher-Axelsson2011_MR_L2beta}.
\end{remark}

\begin{remark} 
Both propositions are proved by density, while both corollaries are proved by duality arguments. See below. Usual arguments show that extension by duality agrees with extension by density in norm  when $p<\infty$ and for the $\weakstar$ topology when $p=\infty$.  
\end{remark}

%%%%% Proofs
\subsection{Proofs}
\label{ssec:proof_sio_tent}

In this section, we provide the proof of results in Section \ref{ssec:results_sio_tent}. The arguments follow those of \cite{auscher12maxreg} to which we will refer, except for Proposition \ref{prop:sio_tent_p<2} \eqref{case:small_q'_sio_tent_p<2}. For this one, we still follow the setup there but introduce a new method based on extrapolation of weighted estimates to improve the range of the exponent $p$. We first settle the proof of the last three results.

\begin{proof}[Proof of Proposition \ref{prop:sio-_tent_p<2}]
    The proof is a verbatim adaptation of that of \cite[Proposition 3.7]{auscher12maxreg}. We leave it to the reader.
\end{proof}

\begin{proof}[Proof of Corollary \ref{cor:sio_tent_p>2} and Corollary \ref{cor:sio-_tent_p>2}]
    The proofs are similar by a formal duality argument from Proposition \ref{prop:sio-_tent_p<2} and Proposition \ref{prop:sio_tent_p<2}, res-pectively. Therefore, we only prove the case $p_M < p \leq 1$ in Corollary \ref{cor:sio_tent_p>2}, and the rest is left to the reader. 
    
    Proposition \ref{prop:sio_duality} says $T^\ast \in \sio^{\kappa-}_{m,q',M}$, and by duality, $T^\ast$ extends to a bounded operator from $L^2_{-\beta-\kappa}(\bR^{1+n}_+)$ to $L^2_{-\beta}(\bR^{1+n}_+)$. Note that $M_c(\kappa,q)=M_c(\kappa,q')$, so for any $f \in T^{\infty;m}_{\beta+\kappa,([p,1])}$, Proposition \ref{prop:sio-_tent_p<2} implies that  $g \mapsto \left\langle f,T^\ast g \right\rangle$ is a bounded anti-linear functional on $T^{p;m}_{-\beta-\kappa}$. Define $Tf \in T^{\infty;m}_{\beta,([p,1])}$ as $Tf(g):=\left\langle f,T^\ast g \right\rangle$ via the identification, and we get
    \[ \|Tf\|_{T^{\infty;m}_{\beta+\kappa,([p,1])}} = \sup_{g:\|g\|_{T^{p;m}_{-\beta-\kappa}}=1} |\left\langle f,T^\ast g \right\rangle| \leq \|T^\ast\|_{\scrL(T^{p;m}_{-\beta-\kappa},T^{p;m}_{-\beta})} \|f\|_{T^{\infty;m}_{\beta,([p,1])}}, \] 
  where $\scrL(X,Y)$ is the space of bounded linear operators from the (quasi-)Banach space $X$ to the (quasi-)Banach space $Y$.
  
  This completes the proof.
\end{proof}

We now turn to the proof of Proposition \ref{prop:sio_tent_p<2}. We split the operator $T$ into two parts, the \textit{regular part}
\[ T_0(f)(t,y)=\int_0^{t/2} (K(t,s)f(s))(y) ds, \]
and the \textit{singular part}
\[ T_{\sing}(f)(t,y) = \int_{t/2}^t (K(t,s)f(s))(y) ds. \]
The meaning of the integrals  by taking the restricted singular kernels $\I_{\{s<t/2\}}(t,s) K(t,s)$ or $\I_{\{t/2<s<t\}}(t,s) K(t,s)$  is that of  \eqref{e:def_si+} if $\kappa>0$ or Lemma \ref{lem:sk_of_si_kappa<0} if $\kappa \leq 0$. 

Let us start with the singular part.

\begin{lemma}
\label{lem:sio_sing_ext_p<2}
    Let $T$ be in $\sio^{\kappa+}_{m,q,M}$ with $M>M_c(\kappa,q)$ and assume that $T$ is a bounded operator from $L^2_\beta(\bR^{1+n}_+)$ to $L^2_{\beta+\kappa}(\bR^{1+n}_+)$. Then, $T_{\sing}$ extends to a bounded operator from $T^{p;m}_\beta$ to $T^{p;m}_{\beta+\kappa}$ when $p_M < p \leq 2$.
\end{lemma}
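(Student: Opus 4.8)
The plan is to apply the atomic criterion of Lemma~\ref{lem:T-bdd-atom_then_extension} on the range $p_M<p\le 1$ and then reach $1<p\le 2$ by interpolation with the $L^2$ endpoint. So I first need the $L^2$-boundedness of $T_{\sing}$ and a uniform bound of $T_{\sing}$ on $T^{p;m}_\beta$-atoms.

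For the $L^2$-boundedness, write $T=T_0+T_{\sing}$ with $T_0 f(t,y)=\int_0^{t/2}(K(t,s)f(s))(y)\,ds$. The cutoff $s<t/2$ keeps the kernel of $T_0$ off the diagonal, so $\|\I_{\{s<t/2\}}K(t,s)\|_{\scrL(L^2(\bR^n))}\lesssim |t-s|^{-1+\kappa}$ is harmless, and the Schur test with weights $s^{\beta+1/2}$, $t^{-\beta-\kappa+1/2}$ against $ds/s$, $dt/t$ --- exactly the computation preceding Lemma~\ref{lem:ext_L2_kappa>0}, but now valid for every $\kappa$ thanks to the cutoff --- gives $T_0\colon L^2_\beta\to L^2_{\beta+\kappa}$ boundedly for $\beta>-1/2$. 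Since $T$ is bounded $L^2_\beta\to L^2_{\beta+\kappa}$ by hypothesis, $T_{\sing}=T-T_0$ is bounded too, which is the first hypothesis of Lemma~\ref{lem:T-bdd-atom_then_extension}.

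Now fix a $T^{p;m}_\beta$-atom $a$ with ball $B=B(x_B,r)$, so $\supp a\subset[0,r^m]\times B$ and $\|a\|_{L^2_\beta}\le|B|^{-[p,2]}$. Because $a(s)=0$ for $s>r^m$ and $T_{\sing}$ only couples $t/2<s<t$, the section $T_{\sing}a(t,\cdot)$ vanishes for $t\ge 2r^m$; hence for any $x$ the cone $\Gamma^m_1(x)$ only sees points $(t,y)$ with $t<2r^m$, so $|y-x|<t^{1/m}\lesssim r$. I split $\bR^n$ into the local ball $4B$ and the annuli $C_j=\{2^jr<|x-x_B|\le 2^{j+1}r\}$, $j\ge 2$. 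On $4B$, Hölder (legitimate since $p\le 2$) and the $L^2$-bound yield
\[ \int_{4B}\cA_{\beta+\kappa;m}(T_{\sing}a)^p\,dx \lesssim |B|^{1-p/2}\,\|T_{\sing}a\|_{L^2_{\beta+\kappa}}^p \lesssim |B|^{1-p/2}\,\|a\|_{L^2_\beta}^p \le |B|^{1-p/2}|B|^{-p[p,2]}=1, \]
since $p[p,2]=1-p/2$. For $x\in C_j$, every contributing $(t,y)$ satisfies $\dist(y,B)\gtrsim 2^jr$ while $|t-s|<t/2<r^m$, so the off-diagonal factor is $\bigl(1+(2^jr)^m/|t-s|\bigr)^{-M}\lesssim (2^jr)^{-mM}|t-s|^M$. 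Inserting this into the $L^q$--$L^2$ off-diagonal estimate, using $\|a(s)\|_q\le|B|^{[q,2]}\|a(s)\|_2$, and then Fubini together with a Cauchy--Schwarz bound in $s$, I expect $\int_{C_j}\cA_{\beta+\kappa;m}(T_{\sing}a)^2\lesssim(2^jr)^{-2mM}|B|^{2[q,2]}J$, where the remaining $s$-integral in $J$ converges because $\theta:=M-1+\kappa-\tfrac nm[q,2]>-1$ and carries a favourable $r$-power because $M-\tfrac nm[q,2]>0$; both inequalities are supplied by $M>M_c(\kappa,q)\ge\tfrac n{2m}\ge\tfrac nm[q,2]$ and $M>M_{\kappa,q}$. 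A further Hölder on $C_j$ then gives $\int_{C_j}\cA^p\lesssim 2^{j(n(1-p/2)-pmM)}$ times a product of $r$-powers which, after combining $|B|\eqsim r^n$, $|B|^{p[q,2]}$, $J^{p/2}$ and the normalization $\|a\|_{L^2_\beta}\le|B|^{-[p,2]}$, cancels to $r^0$.

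Summing, $\sum_{j\ge 2}2^{j(n(1-p/2)-pmM)}$ converges iff $n(1-p/2)<pmM$, i.e. iff $p>p_M=\tfrac{2n}{n+2mM}$, which gives the uniform atom bound $\|T_{\sing}a\|_{T^{p;m}_{\beta+\kappa}}\lesssim 1$ and, via Lemma~\ref{lem:T-bdd-atom_then_extension}, boundedness on $T^{p;m}_\beta$ for $p_M<p\le 1$ (the range is nonempty because $M>\tfrac n{2m}$ forces $p_M<1$). Complex interpolation of tent spaces with the endpoint $p=2$ then covers $p_M<p\le 2$. The main obstacle is the away estimate: extracting the sharp geometric decay $2^{j(n(1-p/2)-pmM)}$ from the off-diagonal bound and verifying the scale-invariant cancellation of all $r$-powers, which is what pins the summability threshold exactly at $p_M$ and makes transparent that $M_c(\kappa,q)$ is precisely the constant forcing the two exponent inequalities ($\theta>-1$ and $M>\tfrac nm[q,2]$) to hold simultaneously.
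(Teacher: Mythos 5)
Your proposal is correct and follows essentially the same route as the paper: reduce to $p_M<p\le 1$ by interpolation with the $L^2$ endpoint, obtain the $L^2_\beta\to L^2_{\beta+\kappa}$ bound for $T_{\sing}=T-T_0$ from the assumed bound on $T$ and the Schur-test bound on $T_0$ (valid for $\beta>-1/2$), and verify the uniform atom bound via the local/annular split with the off-diagonal decay. The paper outsources the atom computation to a ``verbatim adaptation'' of \cite[Lemma 3.4]{auscher12maxreg}; your write-up supplies exactly that computation, including the correct geometric factor $2^{j(n(1-p/2)-pmM)}$ (obtained by Fubini rather than a crude sup bound on the annuli, which is what pins the threshold at $p_M$) and the scale-invariant cancellation of the $r$-powers.
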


\begin{proof}[Proof]
    By interpolation, it suffices to consider the extension of $T_{\sing}$ to $T^{p;m}_\beta$ for $p_M<p \leq 1$. By Lemma \ref{lem:T-bdd-atom_then_extension}, it suffices to prove that $T_{\sing}$ is uniformly bounded from $T^{p;m}_\beta$-atoms to $T^{p;m}_{\beta+\kappa}$. This is a verbatim adaptation of \cite[Lemma 3.4]{auscher12maxreg} once we assume that $T$ is bounded from $L^2_\beta(\bR^{1+n}_+)$ to $L^2_{\beta+\kappa}(\bR^{1+n}_+)$. We leave it to the reader.
\end{proof}

We then move on to estimate the regular part $T_0$. If $q'>\frac{2n}{m(2\beta+1)}$, cf. Proposition \ref{prop:sio_tent_p<2} \eqref{case:large_q'_sio_tent_p<2}, we also obtain the uniform boundedness of $T_0$ on $T^{p;m}_\beta$-atoms for $p_q(\beta) < p \leq 1$ and conclude by Lemma \ref{lem:T-bdd-atom_then_extension}. But it does not work if $q' \leq \frac{2n}{m(2\beta+1)}$, cf. Case \eqref{case:small_q'_sio_tent_p<2}. In this case, our strategy is to embed $T_0$ into the analytic family of operators $\{T_z\}_{z \in \bC}$ given by
\begin{equation}
    T_z(f)(t,y):= \int_0^{t/2} \left ( \frac{s}{t} \right )^z (K(t,s)f(s))(y) ds.
    \label{e:def_T_z}
\end{equation}
If $f \in L^2_\rmc(\bR^{1+n}_+)$, then the integral makes sense as a Bochner integral valued in $L^2(\bR^n)$ for any $z \in \bC$ and a.e. $(t,y) \in \bR^{1+n}_+$.

We study the boundedness of $T_0$ by Stein's interpolation. We show that if $\Re(z)$ is sufficiently large, we can still obtain a uniform bound of $T_z$ acting on atoms. The main task is to obtain boundedness when $\Re(z)<0$. This is given by the following proposition.

\begin{prop}
\label{prop:Tz_ext_p<2}
Let $K$ be in $\sk^\kappa_{m,q,M}$ with $1 \leq q < 2$ and $M>\frac{n}{mq}$. Then, for any $\beta \in \bR$ and $z \in \bC$ with $\Re(z)>-\beta-\frac{1}{2}$, $T_z$ can be extended to a bounded operator from $T^{p;m}_\beta$ to $T^{p;m}_{\beta+\kappa}$ when $q<p \leq 2$.
\end{prop}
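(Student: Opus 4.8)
The operator $T_z$ integrates only over $s<t/2$, so on its support $|t-s|=t-s\eqsim t$ and the kernel carries no genuine singularity: the $L^q$--$L^2$ off-diagonal decay of $K$ turns into an honestly smoothing, rapidly space-decaying kernel. My plan is to bypass atomic decomposition and instead prove a single weighted $L^2$ estimate for the conical square function, then reach the full range by extrapolation. Concretely, writing $\cA_{\gamma;m}$ for the conical square function, I would prove that for every Muckenhoupt weight $w\in A_{2/q}$,
\[ \int_{\bR^n} \cA_{\beta+\kappa;m}(T_z f)(x)^2\, w(x)\,dx \lesssim_{[w]_{A_{2/q}}} \int_{\bR^n} Vf(x)^2\, w(x)\,dx, \qquad Vf(x):=\Big(\int_0^\infty |s^{-\beta} f(s,x)|^2\,ds\Big)^{1/2}, \]
with $V$ the \emph{vertical} square function adapted to the regularity $\beta$. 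Since $1\le q<2$ gives $2/q\in(1,2]$, this $L^2(w)$ bound is amenable to Rubio de Francia extrapolation, which upgrades it to $\|\cA_{\beta+\kappa;m}(T_zf)\|_p\lesssim\|Vf\|_p$ for all $q<p<\infty$. Finally I would invoke the classical domination of the vertical square function by the conical one in $L^p$ for $p\le2$, namely $\|Vf\|_p\lesssim\|\cA_{\beta;m}(f)\|_p=\|f\|_{T^{p;m}_\beta}$, which holds precisely when $p\le2$. Chaining the three bounds yields $\|T_zf\|_{T^{p;m}_{\beta+\kappa}}\lesssim\|f\|_{T^{p;m}_\beta}$ exactly on $q<p\le2$: the lower endpoint $q$ is dictated by the weight class $A_{2/q}$, and the upper endpoint $2$ by the vertical-to-conical comparison.

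To derive the weighted estimate I would fix $x$, decompose $\bR^n$ into the dyadic annuli $C_0=B(x,t^{1/m})$ and $C_j=B(x,2^jt^{1/m})\setminus B(x,2^{j-1}t^{1/m})$, and apply the $L^q$--$L^2$ off-diagonal decay slice by slice. Because $|t-s|\eqsim t$, the decay factor is $\lesssim 2^{-jmM}$ and the prefactor $|t-s|^{-1+\kappa-\frac{n}{m}[q,2]}\eqsim t^{-1+\kappa-\frac{n}{m}[q,2]}$. Bounding the annular $L^q$ mass by the $q$-maximal operator $\cM_q g:=(\cM(|g|^q))^{1/q}$ ($\cM$ the Hardy--Littlewood maximal operator), which costs the volume factor $|B(x,2^jt^{1/m})|^{1/q}\eqsim 2^{jn/q}t^{n/(mq)}$, the $t$-powers reassemble to $t^{-1+\kappa+\frac{n}{2m}}$ and one obtains
\[ \Big(\int_{B(x,t^{1/m})}|T_z f(t,y)|^2\,dy\Big)^{1/2} \lesssim \Big(\sum_{j\ge0}2^{-jmM+jn/q}\Big)\, t^{-1+\kappa+\frac{n}{2m}}\int_0^{t/2}\Big(\tfrac{s}{t}\Big)^{\re z}\cM_q(f(s,\cdot))(x)\,ds. \]
The geometric series converges if and only if $mM>n/q$, i.e. exactly under the hypothesis $M>\frac{n}{mq}$. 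Inserting this into $\cA_{\beta+\kappa;m}(T_zf)(x)^2$ produces $\int_0^\infty t^{-2\beta-2}\big(\int_0^{t/2}(s/t)^{\re z}\cM_q(f(s,\cdot))(x)\,ds\big)^2dt$, and a one-dimensional weighted Hardy inequality in $t$ bounds this by $\int_0^\infty s^{-2\beta}\cM_q(f(s,\cdot))(x)^2\,ds$ precisely when $\re z>-\beta-\tfrac12$, which is where that hypothesis is used. Note the whole estimate is uniform in $\im z$, since $|(s/t)^z|=(s/t)^{\re z}$.

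It then remains to integrate in $x$ against $w$ and apply Fubini together with the Fefferman--Stein inequality $\|\cM_q g\|_{L^2(w)}\lesssim_{[w]_{A_{2/q}}}\|g\|_{L^2(w)}$ (valid exactly for $w\in A_{2/q}$) to the slices $f(s,\cdot)$, which yields the displayed weighted bound with right-hand side $\|Vf\|_{L^2(w)}^2$. Applying extrapolation to the pair $(\cA_{\beta+\kappa;m}(T_zf),\,Vf)$ and the $p\le2$ comparison closes the argument as above. The approximation $T_zf$ is first defined on $L^2_\rmc(\bR^{1+n}_+)$ via \eqref{e:def_T_z}, where all integrals converge by Lemma~\ref{lem:sk_of_si_kappa<0}; the uniform tent-space bound then extends $T_z$ by density.

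The main obstacle is the weighted estimate itself, specifically reconciling the $L^q$ off-diagonal input (with $q<2$) with the $L^2$ conical structure. This is what forces both the weight class $A_{2/q}$ and the precise threshold $M>\frac{n}{mq}$: the annular spreading contributes a volume growth $2^{jn/q}$ that must be absorbed by the off-diagonal decay $2^{-jmM}$, while the passage from the $L^q$ average to a weighted $L^2$ quantity is exactly the content of the Fefferman--Stein inequality on $A_{2/q}$. Once these two ingredients are aligned, the Hardy step, the $x$-integration, and the extrapolation are routine, and the subtle part is purely bookkeeping of exponents to confirm that the range $q<p\le2$ is sharp within this method.
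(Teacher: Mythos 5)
Your proposal is correct and follows essentially the same route as the paper's proof of Proposition \ref{prop:Tz_ext_p<2}: the identical annular decomposition with $t-s\eqsim t$ gives the pointwise bound $\cA_{\beta+\kappa;m}(T_zf)\lesssim \cV_\beta(\cM_q f)$ (this is Lemma \ref{lemma:extra_pointwise_est}, with the Schur test playing the role of your Hardy inequality), and the weighted $L^2(w)$ estimate for $w\in A_{2/q}$, the extrapolation, and the vertical-to-conical comparison for $p\le 2$ are then applied exactly as you describe.
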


Compared to \cite[Lemma 3.5]{auscher12maxreg}, this improves the range of $p$ from $p=2$ to $q<p \leq 2$. This is achieved via an extrapolation of weighted estimates technique. Let us start with a lemma.

\begin{lemma}
\label{lem:T_z_bdd_L2beta}
    Let $\{K(t,s)\}_{(t,s) \in \Delta^c}$ be a strongly measurable family of bounded operators on $L^2(\bR^n)$ such that \eqref{e:def_sk_bdd_K(t,s)_L2} holds for some $\kappa \in \bR$. Then, for any $\beta \in \bR$ and $z \in \bC$ with $\Re(z)>-\beta-\frac{1}{2}$, the integral \eqref{e:def_T_z} converges for $f \in L^2_\beta(\bR^{1+n}_+)$. {Moreover, it defines a bounded operator from $L^2_\beta(\bR^{1+n}_+)$ to $L^2_{\beta+\kappa}(\bR^{1+n}_+)$.}
\end{lemma}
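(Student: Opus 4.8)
The plan is to reduce the statement to a Schur test on the multiplicative group $(0,\infty)$, exactly in the spirit of the computation preceding Lemma~\ref{lem:ext_L2_kappa>0}, the decisive simplification being that on the region of integration $\{s<t/2\}$ one never meets the diagonal singularity of $K$, so the crude bound \eqref{e:def_sk_bdd_K(t,s)_L2} alone suffices and the value of $\kappa$ is irrelevant. First I would record the pointwise majorant. For $0<s<t/2$ we have $t/2\le t-s\le t$, whence \eqref{e:def_sk_bdd_K(t,s)_L2} gives $\|K(t,s)\|_{\scrL(L^2(\bR^n))}\lesssim t^{-1+\kappa}$ with constant independent of $s$, and therefore
\[ \|T_z f(t)\|_2 \le \int_0^{t/2}\Big(\tfrac{s}{t}\Big)^{\Re(z)}\|K(t,s)f(s)\|_2\,ds \lesssim t^{-1+\kappa-\Re(z)}\int_0^{t/2} s^{\Re(z)}\|f(s)\|_2\,ds. \]
Writing $a:=\Re(z)$ and introducing $F(s):=s^{-\beta+1/2}\|f(s)\|_2$, so that $\|f\|_{L^2_\beta(\bR^{1+n}_+)}^2=\int_0^\infty F(s)^2\,\frac{ds}{s}$, this reads $t^{1/2}\,t^{-\beta-\kappa}\|T_z f(t)\|_2\lesssim \int_0^\infty k(t,s)F(s)\,\frac{ds}{s}$, where, after the factor $t^{1/2}$ needed to pass to the Haar measure $\frac{dt}{t}$ is absorbed,
\[ k(t,s):=\I_{\{s<t/2\}}\,t^{-\beta-1/2-a}\,s^{a+\beta+1/2}. \]

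Next I would verify the two Schur bounds with respect to the Haar measures $\frac{ds}{s}$ and $\frac{dt}{t}$. Integrating in $s$,
\[ \int_0^\infty k(t,s)\,\frac{ds}{s}=t^{-\beta-1/2-a}\int_0^{t/2}s^{a+\beta-1/2}\,ds, \]
which converges at $s=0$ precisely when $a+\beta+1/2>0$ and then equals a constant independent of $t$; integrating in $t$,
\[ \int_0^\infty k(t,s)\,\frac{dt}{t}=s^{a+\beta+1/2}\int_{2s}^\infty t^{-\beta-3/2-a}\,dt, \]
which converges at $t=\infty$ under the same condition $a+\beta+1/2>0$ and is again a constant independent of $s$. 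Both conditions coincide with $\Re(z)>-\beta-1/2$. The Schur test then yields $\|T_z f\|_{L^2_{\beta+\kappa}(\bR^{1+n}_+)}\lesssim \|f\|_{L^2_\beta(\bR^{1+n}_+)}$, with an implicit constant blowing up like $(\Re(z)+\beta+1/2)^{-1}$ as $\Re(z)\downarrow-\beta-1/2$, which is harmless for the later Stein interpolation.

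Finally, the same majorant furnishes the convergence claim: finiteness of the $L^2_{\beta+\kappa}$-norm of the majorant forces, for a.e.\ $t$, the inner integral $\int_0^{t/2}(s/t)^{\Re(z)}\|K(t,s)f(s)\|_2\,ds$ to be finite, so $T_z f(t)$ is a well-defined $L^2(\bR^n)$-valued Bochner integral; strong measurability of $K$ then gives the pointwise evaluation $T_zf(t,y)=\int_0^{t/2}(s/t)^z(K(t,s)f(s))(y)\,ds$ for a.e.\ $(t,y)$, exactly as in \cite[Proposition 1.2.25]{Hytonen-NVW2016BanachSpaces}. I do not anticipate a genuine obstacle here: because the integration is confined to $s<t/2$, the diagonal singularity at $s=t$ is never reached, so neither the off-diagonal decay nor the sign of $\kappa$ plays any role, and the only point demanding care is checking that \emph{both} Schur integrals converge under the single sharp condition $\Re(z)>-\beta-1/2$, which in both cases stems from integrability at the endpoint $s=0$.
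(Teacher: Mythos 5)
Your proof is correct and follows essentially the same route as the paper: the paper's proof is exactly the Schur test from the discussion of \eqref{e:def_si+}, run with the kernel $\I_{\{0<s<t/2\}}(s/t)^{\Re(z)}(t-s)^{\kappa-1}s^{\beta+\frac{1}{2}}t^{-\beta-\kappa+\frac{1}{2}}$, which reduces to your $k(t,s)$ once one uses $t-s\eqsim t$ on $\{s<t/2\}$ as you do at the outset. The two Schur integrals, the resulting condition $\Re(z)>-\beta-\frac12$, and the a.e.\ convergence of the Bochner integral via the finite majorant all match the paper's argument.
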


\begin{proof}
    The proof is the same as that of \eqref{e:def_si+} but changing the kernel $k(t,s)$ to $\I_{\{0<s<t/2\}}(t,s) \left ( \frac{s}{t} \right )^{\Re(z)} (t-s)^{\kappa-1} s^{\beta+\frac{1}{2}} t^{-\beta-\kappa+\frac{1}{2}}$.
\end{proof}

\begin{remark}
    \label{rem:necessity_cond_L2_bdd}
    Lemma \ref{lem:T_z_bdd_L2beta} says $T_0$ is always bounded from $L^2_\beta(\bR^{1+n}_+)$ to $L^2_{\beta+\kappa}(\bR^{1+n}_+)$ if $\beta>-1/2$. However, it is unclear for $T_{\sing}$ if $\kappa \le 0$. This is why we need to assume it on $T$, or equivalently on $T_{\sing}$.
\end{remark}

Next, we bring in a pointwise estimate. For any $f \in L^2_{\loc}(\bR^{1+n}_+)$ and $x \in \bR^n$, define the \textit{vertical square function} as
\[ \cV_\beta(f)(x) := \left( \int_0^\infty |t^{-\beta} f(t,x)|^2 dt \right)^{1/2}. \]
Let $\cM$ be the centred Hardy--Littlewood maximal function given by
\[ \cM(g)(x):= \sup_{\rho>0} \fint_{B(x,\rho)} |g(y)| dy \]
for any $g \in L^1_{\loc}(\bR^n)$ and $x \in \bR^n$.

\begin{lemma}
\label{lemma:extra_pointwise_est}

With the assumptions of Proposition \ref{prop:Tz_ext_p<2}, it holds that 
\begin{equation}
    \cA_{\beta+\kappa;m}(T_z(f))(x) \lesssim \cV_\beta(\cM_q(f))(x)
    \label{e:extra_pointwise_est}
\end{equation}
for any $f \in L^2_\beta(\bR^{1+n}_+)$ and $x \in \bR^n$, where
\[ \cM_q(f)(t,x):=\sup_{\rho>0} \left ( \fint_{B(x,\rho)} |f(t,y)|^q dy \right )^{1/q} = \cM(|f(t)|^q) (x)^{1/q}. \]
\end{lemma}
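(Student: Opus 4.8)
The plan is to bound the conical square function $\cA_{\beta+\kappa;m}(T_z(f))(x)$ pointwise by first controlling, for each fixed $t$, the $L^2$-norm of $T_z(f)(t)$ over the ball $B=B(x,t^{1/m})$ appearing in the cone, and then running a one-dimensional Hardy-type estimate in the $t$-variable. Writing $\cA_{\beta+\kappa;m}(T_z(f))(x)^2=\int_0^\infty t^{-2\beta-2\kappa-n/m}\,\|\I_B T_z(f)(t)\|_2^2\,dt$ and applying Minkowski's integral inequality to $T_z(f)(t)=\int_0^{t/2}(s/t)^z K(t,s)f(s)\,ds$, the core quantity to estimate is $\|\I_B K(t,s)f(s)\|_2$ for $0<s<t/2$, in which range $|t-s|\eqsim t$. (Since $q<2$ and $f(s)\in L^2(\bR^n)$ for a.e.~$s$, the local $L^q$ averages defining $\cM_q$ are finite.)

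To estimate $\|\I_B K(t,s)f(s)\|_2$ I would decompose $f(s)=\sum_{j\ge 0}\I_{C_j}f(s)$ into dyadic annuli centred at $x$, with $C_0=B(x,2t^{1/m})$ and $C_j=B(x,2^{j+1}t^{1/m})\setminus B(x,2^jt^{1/m})$ for $j\ge 1$, so that $\dist(B,C_j)\eqsim 2^jt^{1/m}$. Applying the $L^q$--$L^2$ off-diagonal decay \eqref{e:def_off-diagonal_decay} of $K$ to each piece (with $E=B$, $F=C_j$) and using $|t-s|\eqsim t$ produces a factor $t^{-1+\kappa-\frac{n}{m}[q,2]}$, a decay factor $\lesssim 2^{-jmM}$, and $\|\I_{C_j}f(s)\|_q\lesssim (2^jt^{1/m})^{n/q}\cM_q(f)(s,x)$. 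The powers of $t$ then combine to $t^{-1+\kappa+n/(2m)}$, the $n/(mq)$ contributions cancelling so that the exponent is independent of $q$, while the geometric series $\sum_j 2^{j(n/q-mM)}$ converges precisely because $M>\frac{n}{mq}$. This yields $\|\I_B K(t,s)f(s)\|_2\lesssim t^{-1+\kappa+n/(2m)}\,\cM_q(f)(s,x)$, hence $\|\I_B T_z(f)(t)\|_2\lesssim t^{-1+\kappa+n/(2m)}\int_0^{t/2}(s/t)^{\Re(z)}\cM_q(f)(s,x)\,ds$.

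Substituting this back and collecting the powers of $t$ (the prefactor becomes $t^{-2\beta-2}$) reduces the claim to the one-variable inequality $\int_0^\infty t^{-2\beta-2}\big(\int_0^{t/2}(s/t)^{\Re(z)}g(s)\,ds\big)^2\,dt\lesssim \int_0^\infty s^{-2\beta}g(s)^2\,ds$ with $g(s)=\cM_q(f)(s,x)$. Setting $G(s)=s^{-\beta}g(s)$, this is exactly $L^2(dt)$-boundedness of the integral operator with kernel $k(t,s)=t^{-\beta-1-\Re(z)}\I_{\{s<t/2\}}s^{\Re(z)+\beta}$, which is homogeneous of degree $-1$; by the Schur test for such kernels the operator norm is controlled by $\int_0^\infty k(1,s)s^{-1/2}\,ds=\int_0^{1/2}s^{\Re(z)+\beta-1/2}\,ds$, and this integral converges exactly under the hypothesis $\Re(z)>-\beta-1/2$. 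Taking square roots gives $\cA_{\beta+\kappa;m}(T_z(f))(x)\lesssim \cV_\beta(\cM_q(f))(x)$.

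The main obstacle is the bookkeeping in the annular step: one must check that the $q$-dependent powers of $t$ genuinely cancel, leaving the clean exponent $-1+\kappa+n/(2m)$, and that the summability of the annular series is controlled exactly by $M>\frac{n}{mq}$. The two standing hypotheses then enter at two structurally distinct places — $M>\frac{n}{mq}$ in the spatial sum and $\Re(z)>-\beta-\tfrac12$ in the temporal Schur test — which is the heart of the estimate and explains why both assumptions of Proposition~\ref{prop:Tz_ext_p<2} are needed.
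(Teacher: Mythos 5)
Your proposal is correct and follows essentially the same route as the paper's proof: the same dyadic annular decomposition around $B(x,t^{1/m})$, the same use of the $L^q$--$L^2$ off-diagonal decay with $|t-s|\eqsim t$ yielding the exponent $-1+\kappa+\frac{n}{2m}$ and the summability condition $M>\frac{n}{mq}$, and the same final one-dimensional Schur test (which the paper checks by verifying the two integral conditions directly rather than invoking homogeneity, a cosmetic difference) under $\Re(z)>-\beta-\frac12$.
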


\begin{proof}
From the definition  \eqref{e:def_A_concial_square}, we get
\begin{equation}
    \cA_{\beta+\kappa;m}(T_z(f))(x) \eqsim \left \| t^{-\frac{n}{2m}-\beta-\kappa} \|T_z(f)(t)\|_{L^2(B(x,t^{1/m}))} \right \|_{L^2(\bR_+,dt)}.
    \label{e:extra_pw_est_A(T_zf)}
\end{equation}
Fix $(t,x) \in \bR^{1+n}_+$. Define $B_0:=B(x,t^{1/m})$, $C_0:=2B_0$, and $C_j:=2^{j+1} B_0 \setminus 2^j B_0$ for $j \geq 1$, so that
\[ \| T_z(f)(t) \|_{L^2(B_0)} \leq \sum_{j \geq 0} \|\I_{B_0} T_z (\I_{C_j} f)(t)\|_2. \]
Using $L^q-L^2$ off-diagonal decay and $t-s \sim t$, we get
\begin{align*}
    \|\I_{B_0} T_z (\I_{C_j} f)(t)\|_2
    &\leq \int_0^{t/2} \left ( \frac{s}{t} \right )^{\Re(z)} \| \I_{B_0} K(t,s) (\I_{C_j} f(s))\|_2 ~ds \\
    &\lesssim 2^{-jmM} t^{-\Re(z)-1+\kappa-\frac{n}{m}[q,2]} \int_0^{t/2}  s^{\Re(z)} \|\I_{C_j} f(s)\|_q ~ds \\
    &\lesssim 2^{-j(mM-\frac{n}{q})} t^{-\Re(z)-1+\kappa+\frac{n}{2m}} \int_0^{t/2}  s^{\Re(z)} \cM_q(f(s))(x) ds,
\end{align*}
due to the fact that
\[ \|\I_{C_j} f(s)\|_q \leq \|\I_{2^{j+1}B_0} f(s)\|_q \lesssim_q (2^j t^{\frac{1}{m}})^{\frac{n}{q}} \cM_q(f(s))(x). \]
Since $M>\frac{n}{mq}$, taking the sum on $j$, we get
\[ \| T_z(f)(t) \|_{L^2(B_0)} \lesssim t^{-\Re(z)-1+\kappa+\frac{n}{2m}} \int_0^{t/2} s^{\Re(z)} \cM_q(f(s))(x) ds. \]
By \eqref{e:extra_pw_est_A(T_zf)}, we derive
\begin{align*}
    \cA_{\beta+\kappa;m}(T_z(f))(x)^2 
    &\lesssim \int_0^\infty \left ( \int_0^{t/2} \left ( \frac{s}{t} \right )^{\Re(z)+\beta+\frac{1}{2}} \cM_q( s^{-\beta+\frac{1}{2}} f(s))(x) \frac{ds}{s} \right )^2 \frac{dt}{t} \\
    &\lesssim \int_0^\infty (s^{-\beta} \cM_q(f(s))(x))^2 ds = \cV_\beta(\cM_q(f))(x)^2.
\end{align*}
The last inequality follows by Schur test, as 
\[ \int_0^{t/2} \left ( \frac{s}{t} \right )^{\Re(z)+\beta+\frac{1}{2}} \frac{ds}{s} \leq C(z,\beta), \quad \int_{2s}^\infty \left ( \frac{s}{t} \right )^{\Re(z)+\beta+\frac{1}{2}} \frac{dt}{t} \leq C(z,\beta),\]
when $\Re(z)>-\beta-\frac{1}{2}$.
\end{proof}

We can now prove Proposition \ref{prop:Tz_ext_p<2}. Terminologies and facts related to weights $w$ can be found in \cite{CruzUribe-Martell-P2016BanachSpaces}.

\begin{proof}[Proof of Proposition \ref{prop:Tz_ext_p<2}]
The conclusion when $p=2$ has been shown in Lemma \ref{lem:T_z_bdd_L2beta}. We turn to proving it for $p<2$.

For any weight $w \in A_{2/q}$ and $f \in L^2_\rmc(\bR^{1+n}_+)$, Lemma \ref{lemma:extra_pointwise_est} and the $L^{2/q}(w)$-boundedness of $\cM$ imply
\begin{align*}
\|\cA_{\beta+\kappa;m}(T_z(f))\|_{L^2(w)}^2 
&\lesssim \|\cV_\beta(\cM_q(f))\|_{L^2(w)}^2 \\ 
&= \int_0^\infty t^{-2\beta} dt \int_{\bR^n} \cM(|f(t)|^q)(x)^{\frac{2}{q}} w(x) dx \\ 
&\lesssim \int_0^\infty t^{-2\beta} dt \int_{\bR^n} |f(t,x)|^2 w(x) dx = \|\cV_\beta(f)\|_{L^2(w)}^2.
\end{align*}
Limited-range extrapolation (see \cite[Theorem 3.31]{CruzUribe-Martell-P2016BanachSpaces}) implies for any $p \in (q,2)$, $w \in A_{p/q} \cap \RH_{(2/p)'}$, and $f \in L^2_\rmc(\bR^{1+n}_+)$,
\[ \|\cA_{\beta+\kappa;m}(T_z(f))\|_{L^p(w)} \lesssim \|\cV_\beta(f)\|_{L^p(w)}. \]
As $p<2$, we invoke \cite[Proposition 2.3(b)]{auscher-hofmann-martell2010SquareFunctions} and get
\[ \|\cV_\beta(f)\|_{L^p(w)} \lesssim \|\cA_{\beta;m}(f)\|_{L^p(w)} \]
by change of scaling $t \to t^{1/m}$. Taking $w=1$, we obtain that for any $p \in (q,2)$ and $f \in L^2_\rmc(\bR^{1+n}_+)$,
\[ \|T_z(f)\|_{T^{p;m}_{\beta+\kappa}} \eqsim \|\cA_{\beta+\kappa;m}(T_z(f))\|_p \lesssim \|\cA_{\beta;m}(f)\|_p = \|f\|_{T^{p;m}_\beta}. \]
We conclude by a density argument. 
\end{proof}

Let us finish the proof of Proposition \ref{prop:sio_tent_p<2}.

\begin{proof}[Proof of Proposition \ref{prop:sio_tent_p<2}]
    The bounded extension of $T_{\sing}$  from $T^{p;m}_\beta$ to  $T^{p;m}_{\beta+\kappa}$ for $p_M<p \leq 2$ has been shown in Lemma \ref{lem:sio_sing_ext_p<2}. Consider the regular part $T_0$. \\

    \paragraph{Case \eqref{case:large_q'_sio_tent_p<2}: $q'>\frac{2n}{m(2\beta+1)}$}
    We obtain the uniform $T^{p;m}_{\beta+\kappa}$-bound of $T_0$ on $T^{p;m}_\beta$-atoms when $p_q(\beta) <p \leq 1$ by adapting the calculation in \cite[Lemma 3.5]{auscher12maxreg}. As the range of $q$ corresponds to $p_q(\beta)<1$, we conclude using Lemma \ref{lem:T-bdd-atom_then_extension}. \\
    
    \paragraph{Case \eqref{case:small_q'_sio_tent_p<2}: $q' \leq \frac{2n}{m(2\beta+1)}$}
    We first claim that $T_z$ extends to a bounded operator from $T^{1;m}_\beta$ to $T^{1;m}_{\beta+\kappa}$ if $\Re(z)>\frac{n}{mq'}-\beta-\frac{1}{2} \ge 0$. Indeed, the adaptation of \cite[Lemma 3.5]{auscher12maxreg} also implies the uniform $T^{1;m}_{\beta+\kappa}$-bound of $T_z$ on $T^{1;m}_\beta$-atoms, so the claim follows by Lemma \ref{lem:T-bdd-atom_then_extension}. 

    Then the discussion is organised into two cases.
    
    \begin{itemize}
        \item If $q'>2$, Proposition \ref{prop:Tz_ext_p<2} says $T_z$ extends to a bounded operator from $T^{p;m}_\beta$ to $T^{p;m}_{\beta+\kappa}$ if $\Re(z)>-\beta-\frac{1}{2}$ and $q< p \leq 2$.
        
       Then for any $p \in (p_q(\beta),2]$, one can find $t_0 \in (-\beta-\frac{1}{2},0)$, $t_1>\frac{n}{mq'}-\beta-\frac{1}{2}$, and $q_0 \in (p,2]$ so that
        \[ (1-\theta)t_0+\theta t_1 = 0, \quad \frac{1-\theta}{q_0}+\frac{\theta}{1} = \frac{1}{p} \]
        for some $\theta \in (0,1)$. For any $y \in \bR$, the above discussion implies $T_{t_0+iy} \in \scrL(T^{q_0;m}_\beta,T^{q_0;m}_{\beta+\kappa})$ and $T_{t_1+iy} \in \scrL(T^{1;m}_\beta,T^{1;m}_{\beta+\kappa})$ with their operator norms independent of $y$. Then $T_0$ extends to a bounded operator from $T^p_\beta$ to $T^p_{\beta+\kappa}$ as desired, thanks to Stein's interpolation theorem on tent spaces (see \cite{Harboure-Torrea-Viviani1991VectorValued}). 

        \item If $q'=2$, the adaptation of \cite[Lemma 3.5]{auscher12maxreg} shows that $T_z$ extends to a bounded operator from $T^{2;m}_\beta$ to $T^{2;m}_{\beta+\kappa}$ if $\Re(z)>-\beta-\frac{1}{2}$. We hence also get the bounded extension of $T_0$ from $T^p_\beta$ to $T^p_{\beta+\kappa}$ when $p \in (p_2(\beta),2]$ by the same argument as above. 
    \end{itemize}
    
    This completes the proof.
\end{proof}
%%%%% Section: Inhomogeneous Cauchy
\section{Inhomogeneous Cauchy problems}
\label{sec:inhomo}

We may now begin the study of well-posedness in weighted tent spaces of Cauchy problems for the inhomogeneous parabolic equation
\begin{equation}
    \partial_t u(t,x) - \Div_x(A(x) \nabla_x u)(t,x) = f(t,x), \quad (t,x) \in (0,\infty) \times \bR^n,
    \tag{IC}
    \label{e:def_ic}
\end{equation}
where $f \in \scrD'(\bR^{1+n}_+)$ is called the \textit{source term}. Assume that $A \in L^\infty(\bR^n;\mat_n(\bC))$ is \textit{uniformly elliptic}, \textit{i.e.}, there exist $\Lambda_0,\Lambda_1>0$ such that, for a.e. $x \in \bR^n$ and any $\xi,\eta \in \bC^n$,
\begin{equation}
    \Re(\langle A(x)\xi,\xi \rangle) \geq \Lambda_0 |\xi|^2, \quad |\langle A(x)\xi,\eta \rangle| \leq \Lambda_1 |\xi| |\eta|.
    \label{e:def_A_uniformly_ell}
\end{equation}

Let us recall the classical definition of weak solutions to \eqref{e:def_ic}. Let $0 \leq a < b \leq \infty$ be constants, $\Omega$ be an open subset of $\bR^n$, and $Q:=(a,b) \times \Omega$. Let $f$ be in $\scrD'(Q)$. We say that a function $u \in L^2_{\loc}((a,b);H^1_{\loc}(\Omega))$ is a \textit{weak solution to \eqref{e:def_ic} with source term $f$}, if
\begin{equation}
    -\int_Q u(t,x) \overline{\partial_t \phi}(t,x) + \int_Q (A(x) \nabla u(t,x)) \cdot \overline{\nabla \phi}(t,x) = (f,\overline{\phi})
    \label{e:def_weaksol_ic}
\end{equation}
holds for any $\phi \in C_\rmc^\infty(Q)$, where the right-hand side is understood as the pairing between distributions and test functions on $Q$. Here and often in the sequel, when unspecified, the measures in the integrals are unweighted Lebesgue measures. We say $u$ is a \textit{global weak solution to \eqref{e:def_ic} with source term $f$} when $Q=\bR^{1+n}_+=(0,\infty) \times \bR^n$.

We shall use an extension of Duhamel's formula to build weak solutions. Define the operator $L:=-\Div(A(x)\nabla)$ on $L^2(\bR^n)$ with domain
\[ D(L):=\{f \in H^1(\bR^n):\Div(A\nabla f) \in L^2(\bR^n)\}. \]
It is well-known that $-L$ is a densely-defined closed operator on $L^2(\bR^n)$ that generates a bounded analytic semigroup $(e^{-tL})_{t \geq 0}$ on $L^2(\bR^n)$. 

Let $1 \leq p \leq q \leq \infty$ and $M>0$ be constants. We say that a family of bounded operators $(T_t)_{t \geq 0}$ on $L^2(\bR^n)$ has \textit{(parabolic) $L^p-L^q$ off-diagonal estimates} of \textit{order} $M$, if there is a constant $C>0$ so that, for any Borel sets $E,F \subset \bR^n$, $f \in L^2(\bR^n) \cap L^p(\bR^n)$, and a.e. $t>0$,
\begin{equation}
    \|\I_E T_t \I_F f\|_q \leq Ct^{-\frac{n}{2}[p,q]} \left ( 1+\frac{\dist(E,F)^2}{t} \right )^{-M} \|\I_F f\|_p.
    \label{e:def_Tt_ode}
\end{equation}
Let $p_\pm(L), q_\pm(L) \in [1,\infty]$ be the critical numbers introduced by \cite{Auscher2007Memoire}. The semigroup $(e^{-tL})_{t \geq 0}$ has $L^p-L^q$ off-diagonal estimates of any order $M$  when $p_-(L)<p \leq q<p_+(L)$, and the same holds for the family $(t^{1/2} \nabla e^{-tL})_{t>0}$ when $q_-(L)<p \leq q<q_+(L)$. Moreover, $p_-(L)=q_-(L)<2$ and $p_+(L) \geq q_+(L)>2$. \\

Let us also recall a family of slice spaces. Readers can refer to \cite[\S 3]{Auscher-Mourgoglou2019Slice} for the proof of the following facts. For any $p \in [1,\infty]$ and $\delta>0$, the \textit{(parabolic) slice space $E^p_\delta$} consists of measurable functions $f$ for which
\[ \|f\|_{E^p_\delta} := \left\| \left ( \fint_{B(\cdot,\delta^{1/2})} |f(y)|^2 dy \right )^{1/2} \right \|_p < \infty. \]
Let $\delta,\delta'>0$ be constants. Then, for any measurable function $f$ on $\bR^n$,
\begin{equation}
    \|f\|_{E^p_\delta} \lesssim \max \left \{ 1,\left ( \frac{\delta'}{\delta}  \right )^{\frac{n}{2}[2,p]} \right \} \|f\|_{E^p_{\delta'}}.
    \label{e:Epdelta_change}
\end{equation}
Fix $\delta>0$. For any $p \in [1,\infty)$, the dual of $E^p_\delta$ can be identified with $E^{p'}_\delta$ via $L^2(\bR^n)$-duality. Moreover, let $1 \le p_0<p<p_1 \leq \infty$ be with $1/p=(1-\theta)/p_0+\theta/p_1$ for some $\theta>0$. Then, the complex interpolation space $[E^{p_0}_\delta,E^{p_1}_\delta]_\theta$ can be identified with $E^p_\delta$.

%%%%% Main results
\subsection{Main results}
\label{ssec:results_ic}

Define the operator $\cL_1$ on $L^2(\bR^{1+n}_+)$ by
\begin{equation}
    \cL_1(f)(t,y) := \int_0^t (e^{-(t-s)L}f(s))(y) ds.
    \label{e:def_L1}
\end{equation}
For any $f \in L^2(\bR^{1+n}_+)$, $\cL_1(f)$ and $\nabla \cL_1(f)$ are all well-defined in $L^2_{\loc}(\bR^{1+n}_+)$. The first proposition concerns \textit{a priori} estimates in tent spaces. Introduce 
\begin{equation}
    p_L(\beta):=\frac{np_-(L)}{n+(2\beta+1)p_-(L)},
    \label{e:def_pL(beta)}
\end{equation}
which agrees with $p_q(\beta)$ defined in \eqref{e:def_p_M,p_q} for $q=p_-(L)$ and $m=2$. From now on, we fix $m=2$ and write $T^p_{\beta}:=T^{p;2}_{\beta}$.

\begin{prop}[\textit{A priori} estimates]
    \label{prop:est_f_L2c}
    Let $\beta>-1/2$ and $p_L(\beta)<p \leq \infty$. Let $f$ be in $L^2_\rmc(\bR^{1+n}_+)$.
    \begin{enumerate}[(i)]
        \item $\cL_1(f)$ lies in $T^p_{\beta+1}$ and $\nabla \cL_1(f)$ lies in $T^p_{\beta+\frac{1}{2}}$ with
        \[ \|\cL_1(f)\|_{T^{p}_{\beta+1}} \lesssim \|f\|_{T^{p}_\beta}, \quad \|\nabla \cL_1(f) \|_{T^{p}_{\beta+\frac{1}{2}}} \lesssim \|f\|_{T^{p}_\beta}. \]
        \label{item:ic_regularity_L2c}

        \item For any $\phi \in C_\rmc^\infty(\overline{\bR^{1+n}_+})$,
        \begin{equation}
            \int_{\bR^{1+n}_+} |\cL_1(f)| |\partial_t \phi| + \int_{\bR^{1+n}_+} |A\nabla \cL_1(f)| |\nabla \phi| + \int_{\bR^{1+n}_+} |f||\phi| \lesssim_{\phi} \|f\|_{T^p_\beta},
            \label{e:L1f_weak_make_sense_L2c}
        \end{equation}
        and
        \begin{equation}
            -\int_{\bR^{1+n}_+} \cL_1(f) \overline{\partial_t \phi} + \int_{\bR^{1+n}_+} (A\nabla \cL_1(f)) \cdot \overline{\nabla \phi}  = \int_{\bR^{1+n}_+} f\overline{\phi}.
            \label{e:L1f_weak_sol_L2c}
        \end{equation}
        \label{item:ic_sol_L2c}
    \end{enumerate}
\end{prop}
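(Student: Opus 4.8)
\emph{Overall strategy.} The plan is to read part~(i) as an instance of the tent-space theory of Section~\ref{ssec:results_sio_tent} and part~(ii) as a mild-solution computation made rigorous by de Simon's theorem. Fix $m=2$. For $t>s$ put $K_1(t,s):=e^{-(t-s)L}$ and $K_{1/2}(t,s):=\nabla e^{-(t-s)L}$, extended by $0$ for $t<s$; these are the candidate kernels of $\cL_1$ and $\nabla\cL_1$, and the representation \eqref{e:def_T-representation} holds by the very definition of $\cL_1$ together with the pointwise evaluation of Lemma~\ref{lem:ext_L2_kappa>0} (legitimate since the relevant $\kappa$ is positive). As $(e^{-\tau L})_{\tau>0}$ is uniformly bounded on $L^2(\bR^n)$ and $\|\nabla e^{-\tau L}\|_{\scrL(L^2(\bR^n))}\lesssim\tau^{-1/2}$, the size bound \eqref{e:def_sk_bdd_K(t,s)_L2} holds with $\kappa=1$ and $\kappa=\tfrac12$ respectively. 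The parabolic off-diagonal estimates \eqref{e:def_Tt_ode} of all orders, valid for $(e^{-\tau L})$ when $p_-(L)<q\le 2$ and for $(\tau^{1/2}\nabla e^{-\tau L})$ when $q_-(L)<q\le 2$, are precisely the $L^q$--$L^2$ off-diagonal decay of Definition~\ref{def:sk(kappa,m,q,M)}. Hence $K_1\in\sk^1_{2,q,\infty}$ and $K_{1/2}\in\sk^{1/2}_{2,q,\infty}$ for every $q\in(p_-(L),2]$ (with the case $q=2$ also read as trivial $L^2$--$L^2$ decay), and since $\kappa>0$ each operator is bounded $L^2_\beta(\bR^{1+n}_+)\to L^2_{\beta+\kappa}(\bR^{1+n}_+)$ for $\beta>-1/2$ by Lemma~\ref{lem:ext_L2_kappa>0}. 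Thus $\cL_1$ is an SIO of type $+(1,2,q,\infty)$ and $\nabla\cL_1$ of type $+(\tfrac12,2,q,\infty)$.

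\emph{Proof of (i).} With $M=\infty$ one has $p_M=0$ and $M>M_c(\kappa,q)$ for free, and the side condition $M>n/(mq)$ of case~\eqref{case:small_q'_sio_tent_p<2} is automatic. For $p\le 2$ I apply Proposition~\ref{prop:sio_tent_p<2} to $K_1$ and $K_{1/2}$, obtaining boundedness $T^p_\beta\to T^p_{\beta+\kappa}$ on the range $p_q(\beta)<p\le 2$. Since $p_q(\beta)$ is increasing in $q$ and $p_{p_-(L)}(\beta)=p_L(\beta)$ by \eqref{e:def_pL(beta)} (using $p_-(L)=q_-(L)$ for the gradient family), letting $q\downarrow p_-(L)$ exhausts $p_L(\beta)<p\le 2$. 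For $2\le p\le\infty$ I apply Corollary~\ref{cor:sio_tent_p>2} with $q=2$, which gives boundedness on the whole range. The two families of estimates agree on the dense subspace $L^2_\rmc(\bR^{1+n}_+)$, so together they yield $\|\cL_1(f)\|_{T^p_{\beta+1}}\lesssim\|f\|_{T^p_\beta}$ and $\|\nabla\cL_1(f)\|_{T^p_{\beta+1/2}}\lesssim\|f\|_{T^p_\beta}$ for all $p_L(\beta)<p\le\infty$.

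\emph{Proof of the identity \eqref{e:L1f_weak_sol_L2c}.} Because $f\in L^2_\rmc(\bR^{1+n}_+)\subset L^2(\bR_+;L^2(\bR^n))$ is supported in some $[a,b]\times K_0$ with $a>0$, de Simon's theorem gives $u:=\cL_1(f)\in L^2(\bR_+;D(L))$ with $\partial_t u+Lu=f$ in $L^2(\bR^{1+n}_+)$ and $u(t)=0$ for $t<a$; in particular $u\in L^2_{\loc}((0,\infty);H^1(\bR^n))$. For $\phi\in C_\rmc^\infty(\overline{\bR^{1+n}_+})$ I test this identity against $\phi$, integrate by parts in $x$ the term $\langle Lu(t),\phi(t)\rangle=\langle A\nabla u(t),\nabla\phi(t)\rangle$ (valid since $u(t)\in H^1$ and $\Div(A\nabla u(t))\in L^2$), and integrate by parts in $t$ the term $\langle\partial_t u,\phi\rangle$, whose boundary contributions vanish as $u$ is zero near $t=0$ and $\phi$ is compactly supported; this produces exactly \eqref{e:L1f_weak_sol_L2c}. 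Equivalently, one may argue by Fubini after writing $\langle e^{-(t-s)L}f(s),\partial_t\phi(t)\rangle=\langle f(s),e^{-(t-s)L^\ast}\partial_t\phi(t)\rangle$ and using $\int_s^\infty e^{-(t-s)L^\ast}\partial_t\phi(t)\,dt=-\phi(s)+\int_s^\infty L^\ast e^{-(t-s)L^\ast}\phi(t)\,dt$.

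\emph{Finiteness, the a priori bound, and the main obstacle.} For \eqref{e:L1f_weak_make_sense_L2c} the decisive point is that for a fixed $\psi\in C_\rmc^\infty(\overline{\bR^{1+n}_+})$ the pairing $g\mapsto\int_{\bR^{1+n}_+}g\,\overline{\psi}$ is a bounded functional on $T^p_\gamma$ whenever $\gamma>-1/2$: the condition $2\gamma+1>0$ makes $t^{2\gamma}$ locally integrable at $t=0$, so $\psi$ lies in the corresponding dual tent space (e.g. $T^{p'}_{-\gamma}$ for $1<p<\infty$, the Carleson/atomic dual otherwise), the model estimate being $\int_K|g|\le(\int_K t^{2\gamma})^{1/2}\,\|t^{-\gamma}g\|_{L^2(K)}\lesssim_K\|g\|_{T^2_\gamma}$ for $K=[0,T]\times B$, finite precisely when $\gamma>-1/2$. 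Applying this with $\gamma=\beta+1,\beta+\tfrac12,\beta$ (all $>-1/2$) and $\psi=\partial_t\phi,\nabla\phi,\phi$, and using $A\in L^\infty$ to absorb the matrix, \eqref{e:L1f_weak_make_sense_L2c} follows from part~(i). The genuinely delicate ingredient is this boundary behavior at $t=0$: the test functions touch $\{t=0\}$, whereas the truncation estimate \eqref{e:tent_truncate_Lq} only controls local $L^2$ norms on compacts of the \emph{open} half-space. Using that $f\in L^2_\rmc$ forces $\cL_1(f)$ to vanish near $t=0$ settles finiteness for a fixed $f$ but produces a constant depending on $\mathrm{supp}\,f$; the $f$-uniform bound required for the later density argument needs instead the embedding $T^p_\gamma\hookrightarrow L^1_{\loc}(\overline{\bR^{1+n}_+})$ for $\gamma>-1/2$, which is exactly where the hypothesis $\beta>-1/2$ enters. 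By comparison, part~(i) is mechanical once the kernels are identified, the only real choice being to send $q\downarrow p_-(L)$ to reach the sharp exponent $p_L(\beta)$.
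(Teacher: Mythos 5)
Your proposal is correct, and parts (i) and the bound \eqref{e:L1f_weak_make_sense_L2c} follow essentially the paper's own route: identify the kernels $e^{-(t-s)L}$ and $\nabla e^{-(t-s)L}$ as elements of $\sk^{1}_{2,q,\infty}$ and $\sk^{1/2}_{2,q,\infty}$, apply Proposition \ref{prop:sio_tent_p<2} with $q\downarrow p_-(L)$ and Corollary \ref{cor:sio_tent_p>2} with $q=2$, and pair against test functions via the embedding of $C^\infty_\rmc(\overline{\bR^{1+n}_+})$ into the dual tent spaces (the paper's Lemma \ref{lem:i-c_embed_test_tent}, which you reprove in your ``finiteness'' paragraph). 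Where you genuinely diverge is the weak-solution identity \eqref{e:L1f_weak_sol_L2c}: the paper first treats $f\in L^2_\rmc((0,\infty);D(L))$, where $L\cL_1(f)$ is classically defined, and then passes to general $f\in L^2_\rmc(\bR^{1+n}_+)$ by density using the continuity of $\cL_1:L^2\to L^2_1$ and $\cL_{1/2}:L^2\to L^2_{1/2}$; you instead invoke de Simon's theorem in its full maximal-regularity form to get $u\in W^{1,2}(\bR_+;L^2)\cap L^2(\bR_+;D(L))$ with $\partial_tu+Lu=f$ a.e.\ directly for every $f\in L^2_\rmc$, and then integrate by parts. Both are valid; yours is shorter but leans on a deeper external theorem (which the paper does cite and use, but only later for $\cM_L$), while the paper's two-step argument is self-contained at this stage. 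One small step you elide: the identification of $\nabla\cL_1(f)$ with the Bochner integral $\int_0^t\nabla e^{-(t-s)L}f(s)\,ds$ (needed both for the kernel representation and to make sense of $\|\nabla\cL_1(f)\|_{T^p_{\beta+1/2}}$) is not purely definitional --- it requires commuting the closed operator $\nabla$ with the Bochner integral, which the paper isolates as Lemma \ref{lemma:L1/2=nabla_L1} and proves by pairing with test vector fields and Fubini; you should cite Hille's theorem or that lemma explicitly, but this is a one-line repair, not a gap in the strategy.
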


The proof is postponed to Section \ref{ssec:a_priori_est_ic}. Obviously, the proposition implies that $\cL_1$ can be extended to an operator acting on $T^p_\beta$. We summarize the properties of the extension in the following theorem. For any $(t,x) \in \bR^{1+n}_+$, let $W(t,x):=(t,2t) \times B(x,t^{1/2})$ be a \textit{(parabolic) Whitney cube} at $(t,x)$.

\begin{theorem}[Existence]
    \label{thm:ic_sol}
    Let $\beta>-1/2$ and $p_L(\beta)<p \leq \infty$. There is an extension of $\cL_1$ defined by \eqref{e:def_L1} on $L^2_\rmc(\bR^{1+n}_+)$ to a bounded operator from $T^p_\beta$ to $T^p_{\beta+1}$, also denoted by $\cL_1$, such that the following properties hold when $f \in T^p_\beta$ and $u:=\cL_1(f)$.
    \begin{enumerate}[(a)]
        \item (Regularity) $u$ lies in $T^{p}_{\beta+1}$ and $\nabla u$ lies in $T^{p}_{\beta+\frac{1}{2}}$ with
        \[ \|u\|_{T^{p}_{\beta+1}} \lesssim \|f\|_{T^{p}_\beta}, \quad \|\nabla u\|_{T^{p}_{\beta+\frac{1}{2}}} \lesssim \|f\|_{T^{p}_\beta}. \]
        \label{item:ic_regularity}
        
        \item For any $\phi \in C_\rmc^\infty(\overline{\bR^{1+n}_+})$,
        \begin{equation}
            \int_{\bR^{1+n}_+} |u| |\partial_t \phi| + \int_{\bR^{1+n}_+} |A\nabla u| |\nabla \phi| + \int_{\bR^{1+n}_+} |f||\phi| \lesssim_\phi \|f\|_{T^p_\beta},
            \label{e:L1f_weak_make_sense}
        \end{equation}
        and
        \begin{equation}
            -\int_{\bR^{1+n}_+} u \overline{\partial_t \phi} + \int_{\bR^{1+n}_+} (A\nabla u) \cdot \overline{\nabla \phi}  = \int_{\bR^{1+n}_+} f\overline{\phi}.
            \label{e:L1f_weak_sol}
        \end{equation}
        Therefore, $u$ is a global weak solution to \eqref{e:def_ic} with source term $f$.
        \label{item:ic_sol}

        \item (Maximal regularity) $\partial_t u$ and $\Div(A\nabla u)$ belong to $T^{p}_{\beta}$ with
        \[ \|\partial_t u\|_{T^{p}_{\beta}} + \|\Div(A\nabla u)\|_{T^{p}_{\beta}} \lesssim \|f\|_{T^{p}_\beta}. \]
        \label{item:ic_max_reg}

        \item (Whitney trace) For a.e. $x \in \bR^n$,
        \[ \lim_{t \rightarrow 0} \left( \fint_{W(t,x)} |u(s,y)|^2 dsdy \right)^{1/2} = 0. \]
        \label{item:ic_boundary_whitney}

        \item (Distributional trace) $u(t)$ converges to 0 as $t \to 0$ in $L^p(\bR^n)$ if $p_L(\beta)<p \leq 2$, and in $E^q_\delta$ for any $\delta>0, q \in [p,\infty]$ if $2<p \leq \infty$.
        \label{item:ic_boundary_LpEp}
    \end{enumerate}
\end{theorem}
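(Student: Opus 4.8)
The plan is to derive all five assertions by extending the a priori estimates of Proposition \ref{prop:est_f_L2c} from the dense subspace $L^2_\rmc(\bR^{1+n}_+)$ to all of $T^p_\beta$, and then to read off the maximal regularity and the boundary behaviour from identities that are already available for compactly supported data.

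\emph{Extension and regularity (a).} When $p<\infty$ the space $L^2_\rmc(\bR^{1+n}_+)$ is dense in $T^p_\beta$, so the estimates in Proposition \ref{prop:est_f_L2c}\eqref{item:ic_regularity_L2c} let me extend $\cL_1$ to a bounded operator $T^p_\beta \to T^p_{\beta+1}$ and $\nabla\cL_1$ to a bounded operator $T^p_\beta \to T^p_{\beta+1/2}$. For $p=\infty$ density fails, and there I would instead invoke the tent-space boundedness of the singular integral operators underlying $\cL_1$ and $\nabla\cL_1$ (Proposition \ref{prop:ML_ext_tent} together with the $p\ge 2$ results of Section \ref{sec:sio_tent}) to produce the extension directly, using the weak-$\ast$ density of Lemma \ref{lemma:L2c-Tinfty_weak*dense} when passing identities to the limit. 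The one consistency point to check is that the distributional gradient of the extended $\cL_1 f$ is the extended $\nabla\cL_1 f$: this holds because convergence in any $T^p_\gamma$ forces convergence in $L^2_\loc(\bR^{1+n}_+)$, hence in $\scrD'(\bR^{1+n}_+)$, by the local comparison \eqref{e:tent_truncate_Lq}, so the two limits are compatible.

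\emph{Weak solution (b).} First I would upgrade the bound \eqref{e:L1f_weak_make_sense_L2c} to \eqref{e:L1f_weak_make_sense} for every $f\in T^p_\beta$, using that $u=\cL_1 f$ and $\nabla u$ depend continuously on $f$ and that the test function $\phi$ is fixed. Granting this, each of the three maps $f\mapsto \int u\,\overline{\partial_t\phi}$, $f\mapsto \int (A\nabla u)\cdot\overline{\nabla\phi}$, $f\mapsto \int f\overline{\phi}$ is, for fixed $\phi$, a bounded linear functional on $T^p_\beta$; they satisfy \eqref{e:L1f_weak_sol} on the dense set $L^2_\rmc$ by Proposition \ref{prop:est_f_L2c}\eqref{item:ic_sol_L2c}, hence on all of $T^p_\beta$ (weak-$\ast$ density for $p=\infty$). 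This is exactly \eqref{e:def_weaksol_ic}, so $u$ is a global weak solution to \eqref{e:def_ic}. For the maximal regularity (c), the semigroup identity gives pointwise a.e.\ for $f\in L^2_\rmc$ that $\Div(A\nabla\cL_1 f)=-L\cL_1 f=-\cM_L f$, and $\cM_L$ is bounded on $T^p_\beta$ by Proposition \ref{prop:ML_ext_tent}; hence $\Div(A\nabla u)=-\cM_L f\in T^p_\beta$ after extension, and \eqref{e:def_ic} yields $\partial_t u=f+\Div(A\nabla u)=f-\cM_L f\in T^p_\beta$ with the stated bound.

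\emph{Traces (d), (e), and the main difficulty.} The vanishing of the Whitney trace in (d) is a generic feature of $T^p_{\beta+1}$ when $\beta>-1/2$ (the regularity index $2\beta+3$ is positive): a.e.\ finiteness of the conical square function forces the Whitney averages, carrying the positive weight $t^{\beta+1}$, to tend to $0$; I would state this as a general tent-space lemma (proved in Section \ref{sec:mr}) and apply it to $u\in T^p_{\beta+1}$. The genuinely new analytic input is (e). For $f\in L^2_\rmc$ one has $f(s)=0$ for $s$ below some $\epsilon_0>0$, whence $\cL_1 f(t)=0$ for $t<\epsilon_0$ and the trace is trivially zero; the issue is the uniform control needed to pass to the limit. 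Concretely, I expect the heart of the matter to be a trace estimate $\sup_{0<t\le 1}\|\cL_1 g(t)\|_{L^p}\lesssim\|g\|_{T^p_\beta}$ for $p_L(\beta)<p\le 2$ and its slice-space analogue $\sup_{0<t\le 1}\|\cL_1 g(t)\|_{E^q_\delta}\lesssim\|g\|_{T^p_\beta}$ for $2<p\le\infty$, proved by splitting $\cL_1$ dyadically in $t-s$ and summing the parabolic off-diagonal estimates \eqref{e:def_Tt_ode} of the semigroup over the admissible range governed by $p_-(L)$, $p_+(L)$, $q_+(L)$, interpolated with the slice-space duality recalled above. Given such a bound, density closes the argument: writing $g=f-f_k$ with $f_k\in L^2_\rmc$, the term $\|\cL_1 f_k(t)\|$ vanishes for small $t$ while $\|\cL_1(f-f_k)(t)\|\lesssim\|f-f_k\|_{T^p_\beta}$ is uniformly small. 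I expect this uniform trace bound, and in particular pinning down the correct target space ($L^p$ versus $E^q_\delta$) across the full range of $p$, to be the principal obstacle.
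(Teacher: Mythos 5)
Your treatment of (a), (b) and (d) matches the paper's: the extension is obtained from the SIO theory (density for $p<\infty$, weak-$\ast$ density via Lemma \ref{lemma:L2c-Tinfty_weak*dense} for $p=\infty$), the weak formulation passes to the limit because fixed test functions give bounded functionals on the relevant tent spaces, and the Whitney trace is the general tent-space fact of Lemma \ref{lemma:whitney-boundary_tent}. Two points, however, need attention.

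For (c), the claim that the semigroup identity gives \emph{pointwise a.e.} $\Div(A\nabla\cL_1 f)=-\cM_L f$ for $f\in L^2_\rmc(\bR^{1+n}_+)$ is not justified as stated: the integrand $Le^{-(t-s)L}f(s)$ has operator norm of order $|t-s|^{-1}$, so $\int_0^t Le^{-(t-s)L}f(s)\,ds$ is not an absolutely convergent Bochner integral for such $f$, and $\cM_L f$ only makes sense as the de Simon $L^2$-extension $\tilde\cL_0 f$. The identification $\tilde{\cL}_0(f)=-\Div(A\nabla\cL_1(f))$ in $\scrD'(\bR^{1+n}_+)$ has to be proved: first for $f\in L^2((0,\infty);D(L))$, where all integrals converge, and then by a limiting argument using the $L^2$-continuity of $\tilde\cL_0$ and $\cL_{1/2}$ (this is \eqref{e:L0->AL1/2_L2} and Lemma \ref{lemma:L0_sio}). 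Your overall plan for (c) — bound the maximal regularity operator on $T^p_\beta$ and then identify — is the paper's, but this identification step is a real piece of the proof, not a one-line semigroup identity.

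For (e) there is a genuine gap. You reduce the statement to a uniform trace bound $\sup_{0<t\le 1}\|\cL_1 g(t)\|\lesssim\|g\|_{T^p_\beta}$ (in $L^p$ or $E^q_\delta$), acknowledge that you have not proved it, and propose to close by density. Even granting the bound, the density step fails at $p=\infty$, where $L^2_\rmc$ is only weak-$\ast$ dense, so $\|f-f_k\|_{T^\infty_\beta}$ need not be small. The paper takes a different and more economical route that yields decay, not just boundedness, with no approximation at all: since $u$ is a weak solution, the parabolic Caccioppoli inequality (Lemma \ref{lemma:Caccioppoli}) applied on Whitney regions gives
\[ \|u(t)\|_{E^p_{t/16}} \lesssim t^{\beta+\frac{1}{2}}\big(\|u\|_{T^p_{\beta+1}}+\|f\|_{T^p_\beta}\big), \]
directly for every $f\in T^p_\beta$. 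For $p\le 2$ one concludes by $E^p_{t/16}\hookrightarrow L^p$; for $2<p\le\infty$ the endpoint $E^\infty_\delta$ decay is obtained from the maximal regularity $\partial_t u\in T^p_\beta$ of part (c) via the fundamental theorem of calculus (Corollary \ref{cor:Einfty_est_sol}), and slice-space interpolation covers all $q\in[p,\infty]$. Note in particular that the distributional trace in the paper is a consequence of the PDE and of maximal regularity, not of a new operator estimate for $\cL_1$ at fixed time; you would need to either carry out your dyadic off-diagonal argument in full (including for $E^q_\delta$ with $q=\infty$) or switch to this Caccioppoli-based argument.
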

Statements \eqref{item:ic_regularity} and \eqref{item:ic_sol} are proved in Section \ref{ssec:exist_reg_ic}, and the others are proved in Section \ref{ssec:boundary_ic}. Let us first make some remarks.

In \eqref{item:ic_sol}, not only do we prove that $\cL_1(f)$ is a global weak solution to \eqref{e:def_ic} with source term $f$, but we also implicitly imply $\cL_1(f)(0)=0$ as the test functions are arbitrary at $t=0$. In turn, \eqref{item:ic_boundary_whitney} and \eqref{item:ic_boundary_LpEp} explicitly show the null \textit{boundary behavior} of $\cL_1(f)(t)$ as $t \to 0$. As we shall see in Lemma \ref{lemma:whitney-boundary_tent}, \eqref{item:ic_boundary_whitney} in fact holds for any function $u \in T^{p}_{\beta+1}$. 

Furthermore, \eqref{item:ic_sol} yields the equality $\partial_t u-\Div(A \nabla u)=f$ holds in $\scrD'(\bR^{1+n}_+)$, but \eqref{item:ic_max_reg} implies it actually holds in $T^p_\beta$. In fact, $\partial_t u$ and $\Div(A\nabla u)$ cannot lie in a better space simultaneously, otherwise the regularity of $f$ could be raised. It is exactly the spirit of maximal regularity although the interpretation is no longer via semigroup theory.

Then, we turn to uniqueness. 
\begin{theorem}[Uniqueness]
    \label{thm:unique_ic}
    Let $\beta>-1/2$ and $p_L(\beta)<p \leq \infty$. Then, for any $f \in T^{p}_\beta$, there exists at most one global weak solution $u \in T^{p}_{\beta+1}$ to \eqref{e:def_ic} with source term $f$.
\end{theorem}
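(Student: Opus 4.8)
By linearity of \eqref{e:def_ic}, it suffices to prove that if $u\in T^p_{\beta+1}$ is a global weak solution with source term $f=0$, then $u=0$. Following the strategy of \cite{auscher-monniaux-portal2019existence}, the plan is to first establish the \emph{homotopy identity}
\[ \langle u(t),h\rangle = \langle u(s),(e^{-(t-s)L})^\ast h\rangle, \qquad h\in C_\rmc^\infty(\bR^n), \]
for a.e. $0<s<t$, and then to let $s\to 0$, exploiting that every $T^p_{\beta+1}$-function has null Whitney trace.

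To derive the identity, fix $h\in C_\rmc^\infty(\bR^n)$ and $t>0$, and set $\psi(\tau):=(e^{-(t-\tau)L})^\ast h=e^{-(t-\tau)L^\ast}h$ for $0<\tau<t$, which satisfies $\partial_\tau\psi=L^\ast\psi=-\Div(A^\ast\nabla\psi)$. For a cutoff $\eta\in C_\rmc^\infty((0,t))$, I would test the weak formulation \eqref{e:def_weaksol_ic} (with $f=0$) against $\phi=\eta\psi$. Since $\overline{\partial_\tau\psi}=-\overline{\Div(A^\ast\nabla\psi)}$, an integration by parts in $x$ rewrites $\int u\,\eta\,\overline{\partial_\tau\psi}$ as $\int\eta\,(A\nabla u)\cdot\overline{\nabla\psi}$, which cancels the gradient term, leaving
\[ \int_0^t \eta'(\tau)\,\langle e^{-(t-\tau)L}u(\tau),h\rangle\,d\tau=0. \]
By arbitrariness of $\eta$, the map $\tau\mapsto\langle e^{-(t-\tau)L}u(\tau),h\rangle$ is a.e. constant on $(0,t)$; letting $\tau\to t$ along Lebesgue points of $\tau\mapsto u(\tau)$ identifies the constant with $\langle u(t),h\rangle$, which yields the homotopy identity.

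The delicate point is that $\psi$ is not an admissible test function: with $A$ merely bounded measurable, $\psi$ is neither compactly supported in space nor smooth (one only has $\nabla\psi\in L^2$ and $\Div(A^\ast\nabla\psi)\in L^2$). This is precisely where the rough-coefficient elaboration of Green's identity of \cite{auscher-monniaux-portal2019existence} enters. I would first extend the class of admissible $\phi$ in \eqref{e:def_weaksol_ic} to functions that are compactly supported in $\tau$ with $\phi(\tau),\partial_\tau\phi(\tau)\in H^1(\bR^n)$, by density; then replace $\psi$ by $\chi_R\psi$ for a spatial cutoff $\chi_R$ and let $R\to\infty$, the commutator and tail errors being absorbed by the $L^2$--$L^2$ and gradient off-diagonal (Gaussian) decay of $e^{-(t-\tau)L^\ast}$ applied to the compactly supported $h$, set against the local $L^2$-integrability of $u$ and $\nabla u$ inherited from $u\in T^p_{\beta+1}$.

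It remains to pass to the limit $s\to0$. On the right-hand side, $(e^{-(t-s)L})^\ast h\to(e^{-tL})^\ast h$ in $L^2(\bR^n)$ by strong continuity of the semigroup, while $u(s)$ should vanish in an averaged sense. To make this rigorous I would average over $s\in(s_0,2s_0)$, writing
\[ \langle u(t),h\rangle=\fint_{s_0}^{2s_0}\langle u(s),(e^{-(t-s)L})^\ast h\rangle\,ds, \]
decompose $\bR^n$ into balls $B_j=B(x_j,s_0^{1/2})$, and apply the Cauchy--Schwarz inequality on each slab $(s_0,2s_0)\times B_j$. This produces the Whitney averages $\big(\fint_{W(s_0,x_j)}|u|^2\big)^{1/2}$, which tend to $0$ for a.e. $x$ by Lemma \ref{lemma:whitney-boundary_tent}, weighted by the local masses of $(e^{-tL})^\ast h$; the fast spatial decay of the latter, together with the non-tangential control of $u$, furnishes a summable dominating weight, so dominated convergence forces the right-hand side to $0$. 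Hence $\langle u(t),h\rangle=0$ for all $h\in C_\rmc^\infty(\bR^n)$ and a.e. $t$, i.e. $u=0$. The main obstacles I anticipate are the rigorous justification of the homotopy identity with non-smooth $A$—in particular the regularization of the adjoint-semigroup test function and the control of the spatial truncation as $R\to\infty$—and the final dominated-convergence step, which must be run uniformly in $s_0$ using only the qualitative null Whitney trace together with quantitative off-diagonal decay of the semigroup.
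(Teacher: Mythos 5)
Your reduction to $f=0$ and your route to the homotopy identity are essentially the paper's: the paper checks the Gaussian-weighted local $L^2$ hypothesis of \cite[Theorem 5.1]{auscher-monniaux-portal2019existence} via Lemmas \ref{lemma:embed_tent_L2beta(a,b)_p<2} and \ref{lemma:homotopy-condition} and then quotes that result rather than re-running the cutoff argument, but the content is the same.

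The genuine gap is in your limit $s\to 0$. You propose to use only the \emph{qualitative} null Whitney trace of $T^p_{\beta+1}$-functions (Lemma \ref{lemma:whitney-boundary_tent}) together with dominated convergence against local masses of $(e^{-tL})^\ast h$. Two things break. First, integrability of the dominant: after Cauchy--Schwarz on the slabs $(s_0,2s_0)\times B_j$ your bound is essentially $\int_{\bR^n}\bigl(\fint_{W(s_0,x)}|v|^2\bigr)^{1/2}G(x)\,dx$ with $G$ an averaged version of $|(e^{-tL})^\ast h|$, and the natural $s_0$-uniform majorant is the Kenig--Pipher maximal function of $v$, which only lies in $L^p$. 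For $p\le 1$ (and the theorem is claimed down to $p_L(\beta)$, which can be $<1$) an $L^p$ function need not be locally integrable, so no integrable dominant is available and the dominated convergence step does not close; even for $1<p<2$ you would need $G\in L^{p'}$ uniformly, i.e.\ control of $(e^{-(t-s)L})^\ast h$ in $L^{p'}$, which requires $p'<p_+(L^\ast)=(p_-(L))'$, i.e.\ $p>p_-(L)$ --- strictly smaller than the claimed range since $p_L(\beta)<p_-(L)$. Second, and relatedly, you never use that $v$ is a \emph{solution} in the limiting step, whereas the paper's proof hinges on the Caccioppoli inequality (Lemma \ref{lemma:Caccioppoli}): it upgrades the Whitney-average control coming from $v\in T^p_{\beta+1}$ to quantitative slice bounds $\|v(s)\|_{E^p_{s/16}}\lesssim s^{\beta+1/2}\|v\|_{T^p_{\beta+1}}$ for $p\ge 2$, and for $p<2$ to $\|v(s)\|_p\lesssim s^{\beta+1/2}$ and $\|v(s)\|_2\lesssim s^{\beta+1/2-\frac n2[p,2]}$, which interpolate to $\|v(s)\|_q\lesssim s^{\beta+1/2-\frac n2[p,q]}\to 0$ for some $q\in(p,2)\cap(p_-(L),2)$; the condition $p>p_L(\beta)$ is exactly what makes this exponent positive while keeping $q'<p_+(L^\ast)$ so that $(e^{-(t-s)L})^\ast h$ converges in $L^{q'}$. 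Without this integrability upgrade and the choice of the intermediate exponent $q$, your argument cannot reach the full range $p_L(\beta)<p\le\infty$. The fix is to insert the Caccioppoli step and pass to norm convergence of $v(s)$ in $E^p_\delta$, $E^\infty_\delta$, or $L^q$ paired against norm convergence of $(e^{-(t-s)L})^\ast h$ in the dual space, as in the paper.
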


The proof is deferred to Section \ref{ssec:uniquess_ic}. Altogether, we have demonstrated the main theorem, Theorem \ref{thm:main}.

\begin{remark}
    Local well-posedness also holds, as the proofs will show that one can work on any given strip $[0,T] \times \bR^n$ instead. 
\end{remark}

%%%%% A priori estimates
\subsection{\textit{A priori} estimates}
\label{ssec:a_priori_est_ic}
In this section, we prove Proposition \ref{prop:est_f_L2c}. Define the operator $\cL_{1/2}: L^2(\bR^{1+n}_+) \to L^1_{\loc}((0,\infty);L^2(\bR^n))$ by
\begin{equation}
    \cL_{1/2}(f)(t,y) := \int_0^t (\nabla e^{-(t-s)L}f(s))(y) ds, \quad \mathrm{a.e.}\ (t,y)\in \bR^{1+n}_+.
    \label{e:def_op_reg_inhomo}
\end{equation}
The integral exists as an $L^2(\bR^n)$-valued Bochner integral. 

\begin{lemma}
    \label{lemma:L1/2=nabla_L1}
    For any $f \in L^2_\rmc(\bR^{1+n}_+)$,
    \begin{equation}
        \cL_{1/2}(f) = \nabla \cL_1(f) \quad \text{ in } \scrD'(\bR^{1+n}_+).
        \label{e:L1/2=nabla_L1}
    \end{equation}
    In particular, \eqref{e:L1f_weak_sol_L2c} holds if and only if for all $\phi \in C_\rmc^\infty(\overline{\bR^{1+n}_+})$,
    \begin{equation}
        -\int_{\bR^{1+n}_+} \cL_1(f) \overline{\partial_t \phi} + \int_{\bR^{1+n}_+} A\cL_{1/2}(f) \cdot \overline{\nabla \phi} = \int_{\bR^{1+n}_+} f\overline{\phi}.
        \label{e:ic_weak_sol_by_L1/2}
    \end{equation}
\end{lemma}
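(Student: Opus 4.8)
The plan is to establish the pointwise-in-time identity $\nabla[\cL_1(f)(t)] = \cL_{1/2}(f)(t)$ in $L^2(\bR^n)$ for a.e.\ $t$, then promote it to the distributional identity \eqref{e:L1/2=nabla_L1} by Fubini's theorem; the equivalence of the two weak formulations is afterwards a formal substitution. The whole point is that the spatial gradient commutes with the $s$-integral because it is a \emph{closed} operator.

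First I would fix $f \in L^2_\rmc(\bR^{1+n}_+)$ and recall that for $\tau>0$ the semigroup maps $L^2(\bR^n)$ into $D(L) \subset H^1(\bR^n)$, and that the family $(\tau^{1/2}\nabla e^{-\tau L})_{\tau>0}$ is bounded on $L^2(\bR^n)$ (this is the $q_-(L)<2<q_+(L)$ off-diagonal estimate recalled in Section~\ref{sec:inhomo}), so $\|\nabla e^{-\tau L}\|_{\scrL(L^2(\bR^n))}\lesssim \tau^{-1/2}$. View $\nabla$ as a closed, densely defined operator from $L^2(\bR^n)$ to $L^2(\bR^n;\bC^n)$ with domain $H^1(\bR^n)$. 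For a.e.\ $t$ the map $s\mapsto e^{-(t-s)L}f(s)$ takes values in $H^1(\bR^n)$ on $(0,t)$, and both $s\mapsto e^{-(t-s)L}f(s)$ and $s\mapsto \nabla e^{-(t-s)L}f(s)$ are Bochner integrable on $(0,t)$: the former because $\int_0^t\|f(s)\|_2\,ds\le t^{1/2}\|f\|_{L^2(\bR^{1+n}_+)}<\infty$, the latter by the gradient bound above, which is exactly the integrability already underlying the definition \eqref{e:def_op_reg_inhomo} of $\cL_{1/2}$. Hille's theorem for closed operators then lets $\nabla$ pass through the Bochner integral, giving
\[ \nabla[\cL_1(f)(t)] = \nabla \int_0^t e^{-(t-s)L}f(s)\,ds = \int_0^t \nabla e^{-(t-s)L}f(s)\,ds = \cL_{1/2}(f)(t) \]
in $L^2(\bR^n)$, for a.e.\ $t>0$.

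Next I would pass to distributions. For any scalar $\psi \in C_\rmc^\infty(\bR^{1+n}_+)$, Fubini's theorem (licit since $\cL_1(f) \in L^2_{\loc}(\bR^{1+n}_+)$ and $\psi$ has compact support in the open half-space) reduces the spatial distributional derivative of $\cL_1(f)$ to the $t$-integral of the spatial weak derivatives of the slices $\cL_1(f)(t,\cdot)$; the previous step identifies each slice as an $H^1$-function with weak gradient $\cL_{1/2}(f)(t,\cdot)$, which yields \eqref{e:L1/2=nabla_L1}. Since $\nabla\cL_1(f)$ and $\cL_{1/2}(f)$ are locally integrable functions agreeing as distributions on the open set $\bR^{1+n}_+$, they coincide almost everywhere. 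For the ``in particular'' statement it then suffices to observe that the integrals in \eqref{e:L1f_weak_sol_L2c} and \eqref{e:ic_weak_sol_by_L1/2} converge absolutely (the integrands live on a fixed compact set where $\cL_1(f) \in L^2$ and $\cL_{1/2}(f) \in L^1_{\loc}((0,\infty);L^2(\bR^n))$, with $A \in L^\infty$, $\partial_t\phi,\nabla\phi$ bounded and compactly supported, and $\cL_{1/2}(f)$ vanishing for small $t$ since $f$ does); replacing $\nabla\cL_1(f)$ by $\cL_{1/2}(f)$, legitimate by the a.e.\ identity as $\{t=0\}$ is null, transforms one formulation into the other.

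I expect the only genuine obstacle to be the justification that $\nabla$ commutes with the Bochner integral, i.e.\ the closedness-plus-integrability argument producing the displayed identity. The apparently non-integrable weight $(t-s)^{-1/2}$ at $s=t$ is in fact harmless, being locally integrable in $s$, and its $L^2(\bR^n)$-valued integrability for a.e.\ $t$ is precisely what makes $\cL_{1/2}$ well defined; one must only take care that the gradient bound is invoked through the $L^2$-off-diagonal estimate valid in a neighbourhood of $p=2$.
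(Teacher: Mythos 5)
Your proposal is correct, and it reaches the identity by a slightly different mechanism than the paper. The paper's proof is a three-line duality computation: it tests $\cL_{1/2}(f)$ against a vector-valued test function $\psi\in C_\rmc^\infty(\bR^{1+n}_+;\bC^n)$, applies Fubini, and integrates by parts in $x$ inside the double integral using that $e^{-(t-s)L}f(s)\in H^1(\bR^n)$, so that $\langle \nabla e^{-(t-s)L}f(s),\psi(t)\rangle=-\langle e^{-(t-s)L}f(s),\Div\psi(t)\rangle$; undoing Fubini gives $\int \cL_{1/2}(f)\cdot\overline\psi=-\int\cL_1(f)\,\Div\overline\psi$ directly. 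You instead invoke Hille's theorem for the closed operator $\nabla:H^1(\bR^n)\subset L^2(\bR^n)\to L^2(\bR^n;\bC^n)$ to commute the gradient with the Bochner integral, which yields the stronger slice-wise statement that $\cL_1(f)(t)\in H^1(\bR^n)$ with $\nabla[\cL_1(f)(t)]=\cL_{1/2}(f)(t)$ for a.e.\ $t$, and then you slice to get the distributional identity. The two arguments rest on exactly the same integrability input (Bochner integrability of $s\mapsto \nabla e^{-(t-s)L}f(s)$ for a.e.\ $t$, which, as you note, is what makes $\cL_{1/2}$ well defined in $L^1_{\loc}((0,\infty);L^2(\bR^n))$), and indeed the paper's integration by parts is just the weak form of the commutation you prove abstractly. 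Your route buys the additional pointwise-in-$t$ Sobolev regularity of $\cL_1(f)$, at the cost of citing Hille's theorem; the paper's is shorter and entirely self-contained. Your treatment of the ``in particular'' part matches the paper's (absolute convergence plus substitution of $\cL_{1/2}(f)$ for $\nabla\cL_1(f)$ as elements of $L^1_{\loc}((0,\infty);L^2(\bR^n))$).
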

\begin{proof}
    It suffices to prove \eqref{e:L1/2=nabla_L1}. Indeed, if it holds, then $\nabla \cL_1(f)=\cL_{1/2}(f)$ in $L^1_{\loc}((0,\infty);L^2(\bR^n))$, so the equivalence follows obviously. 
    
    Let us prove \eqref{e:L1/2=nabla_L1}. Let $\psi \in C_\rmc^\infty(\bR^{1+n}_+;\bC^n)$. Fubini's theorem yields
    \begin{align*}
    \int_{\bR^{1+n}_+} \cL_{1/2}(f) \cdot \overline{\psi}
     &= \int_0^\infty dt \int_0^t \langle \nabla e^{-(t-s)L} f(s), \psi(t) \rangle_{L^2(\bR^n;\bC^n)} ds \\ 
     &= -\int_0^\infty dt \int_0^t \langle e^{-(t-s)L} f(s), \Div \psi(t) \rangle_{L^2(\bR^n)} ds \\
     &= -\int_{\bR^{1+n}_+} \cL_1(f) \Div \overline{\psi}.
    \end{align*}
    This completes the proof.
\end{proof}

We also need an embedding lemma. 
\begin{lemma}
    \label{lem:i-c_embed_test_tent}
    Let $m \in \bN$ and $\beta>-1/2$. Then,  $L^\infty_\rmc(\overline{\bR^{1+n}_+})$ embeds into $T^{p;m}_{-\beta}$ and $T^{\infty;m}_{-\beta;(\sigma)}$ for $p \in (0,\infty)$ and $0 \leq \sigma \leq \frac{m}{n}(\beta+\frac{1}{2})$.
\end{lemma}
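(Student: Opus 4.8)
The plan is to fix an arbitrary $f \in L^\infty_\rmc(\overline{\bR^{1+n}_+})$, choose $T,R>0$ with $\supp f \subset [0,T] \times \overline{B(0,R)}$ and $\|f\|_\infty<\infty$, and then verify directly that the two relevant (quasi-)norms are finite. The single analytic fact driving everything is that $\beta>-1/2$ makes $\int_0^T t^{2\beta}\,dt<\infty$; the compact support of $f$ will separately supply the spatial integrability.

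For the $T^{p;m}_{-\beta}$ embedding with $p\in(0,\infty)$, I would work with the conical square function $\cA_{-\beta;m}(f)$ from \eqref{e:def_A_concial_square} (which carries weight $t^{\beta}$). Using $\int_{B(x,t^{1/m})}|f(t,y)|^2\,dy \le \|f\|_\infty^2 |B(x,t^{1/m})| \lesssim \|f\|_\infty^2 t^{n/m}$, the factor $t^{-n/m}$ in the definition of $\cA_{-\beta;m}$ cancels, leaving
\[ \cA_{-\beta;m}(f)(x)^2 \lesssim \|f\|_\infty^2 \int_0^T t^{2\beta}\,dt \lesssim_{\beta,T} \|f\|_\infty^2, \]
a bound uniform in $x$ precisely because $\beta>-1/2$. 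At the same time, the integrand defining $\cA_{-\beta;m}(f)(x)$ vanishes unless $B(x,t^{1/m})$ meets $\overline{B(0,R)}$ for some $t\le T$, i.e.\ unless $|x|<R+T^{1/m}$; hence $\cA_{-\beta;m}(f)$ is supported in $\overline{B(0,R+T^{1/m})}$. Combining the uniform bound with the compact support yields $\|f\|_{T^{p;m}_{-\beta}}^p = \int_{B(0,R+T^{1/m})}\cA_{-\beta;m}(f)^p \lesssim 1$, finite for every $p\in(0,\infty)$ (the same computation being valid as a quasi-norm when $p<1$).

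For the $T^{\infty;m}_{-\beta,(\sigma)}$ embedding I would estimate, for a ball $B$ of radius $r:=r(B)$, the inner quantity by bounding $\fint_B|f(t,y)|^2\,dy \le \|f\|_\infty^2$ together with $\int_0^{\min(r^m,T)} t^{2\beta}\,dt \eqsim \min(r^m,T)^{2\beta+1}$, so that
\[ \frac{1}{|B|^\sigma}\left(\int_0^{r^m}\fint_B |t^\beta f(t,y)|^2\,dt\,dy\right)^{1/2} \lesssim \|f\|_\infty\, \frac{\min(r^m,T)^{(2\beta+1)/2}}{r^{n\sigma}}. \]
I would then split on the size of $r$. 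For $r\le T^{1/m}$ the right-hand side is $\eqsim \|f\|_\infty\, r^{\frac{m(2\beta+1)}{2}-n\sigma}$, whose exponent is $\ge 0$ exactly because $\sigma\le\frac{m}{n}(\beta+\tfrac12)$, so it stays bounded as $r\to0$; for $r> T^{1/m}$ it is $\eqsim \|f\|_\infty\,T^{(2\beta+1)/2}\,r^{-n\sigma}$, bounded as $r\to\infty$ since $\sigma\ge0$. Taking the supremum over all balls yields a finite $T^{\infty;m}_{-\beta,(\sigma)}$-norm.

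The computation is elementary throughout, so I do not expect a genuine obstacle; the only points requiring care are isolating the two structural constraints — $\beta>-1/2$ guaranteeing integrability of $t^{2\beta}$ near $t=0$, and $\sigma\le\frac{m}{n}(\beta+\tfrac12)$ guaranteeing uniformity over small balls on the $T^\infty$ scale — and checking that both the endpoint $\sigma=\frac{m}{n}(\beta+\tfrac12)$ (zero exponent, hence a constant bound) and the degenerate case $\sigma=0$ (large balls controlled by $T^{(2\beta+1)/2}$) are covered by the split above.
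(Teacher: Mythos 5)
Your proposal is correct and follows essentially the same route as the paper's proof: a uniform bound on the conical square function via $\int_0^T t^{2\beta}\,dt<\infty$ combined with its compact support for the $T^{p;m}_{-\beta}$ part, and the direct ball-by-ball estimate with the exponent condition $\sigma\le\frac{m}{n}(\beta+\tfrac12)$ for the $T^{\infty;m}_{-\beta,(\sigma)}$ part. The only difference is cosmetic (your explicit split on $r\lessgtr T^{1/m}$ versus the paper's single bound $R^{\beta+\frac12-\frac{n}{m}\sigma}$, which amounts to the same case analysis).
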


\begin{proof}
    Let $\phi$ be an $L^\infty_\rmc(\overline{\bR^{1+n}_+})$-function supported in $[0,R] \times B(0,R^{1/m})$ for some $R>0$. Then $\cA_{-\beta;m}(\phi)$ is supported on $B(0,2R^{1/m})$, and for any $x \in B(0,2R^{1/m})$,
    \[ \cA_{-\beta;m}(\phi)(x)^2 = \int_0^R dt \fint_{B(x,t^{1/m})} |t^{\beta} \phi(t,y)|^2 dy \leq \|\phi\|_\infty^2 \int_0^R t^{2\beta} dt \lesssim \|\phi\|_\infty^2 \]
    as $\beta>-1/2$. Thus, for any $p \in (0,\infty)$,
    \[ \|\phi\|_{T^{p;m}_{-\beta}} = \|\cA_{-\beta;m}(\phi)\|_p \lesssim_{R,\beta} \|\phi\|_\infty. \]
    Similarly, for any ball $B=B(x_0,r^{1/m}) \subset \bR^n$, we get
    \begin{align*}
        &\frac{1}{|B|^\sigma} \left ( \int_0^r \I_{[0,R]}(t) dt \fint_B |t^{\beta} \phi(t,y)|^2 dy \right )^{1/2} \\
        &\quad \lesssim r^{-\frac{n}{m}\sigma} \left ( \int_0^{\min\{r,R\}} t^{2\beta} dt  \right )^{1/2} \|\phi\|_\infty \lesssim R^{\beta+\frac{1}{2}-\frac{n}{m}\sigma} \|\phi\|_\infty.
    \end{align*}
    We hence conclude by taking the supremum over all balls $B$, since the controlling constant is independent of $B$.
\end{proof}

We now prove Proposition \ref{prop:est_f_L2c}.
\begin{proof}[Proof of Proposition \ref{prop:est_f_L2c}]
    First, observe that
    \begin{equation}
        \begin{cases}
            \cL_1 \in \sio^{1+}_{2,q,\infty} & \text{ if } p_-(L) < q < p_+(L), \\ 
            \cL_{1/2} \in \sio^{\frac{1}{2}+}_{2,q,\infty} & \text{ if } q_-(L) < q < q_+(L).
        \end{cases}
        \label{e:L1-1/2_sio}
    \end{equation}
    Indeed, it suffices to consider the former assertion, and the latter follows similarly. When $p_-(L)<q \leq r<p_+(L)$, the family $(e^{-(t-s)L})_{t>s}$ has $L^q-L^r$ off-diagonal decay of type $(1,2,M)$ for any $M>0$. Thus, for $q \in (p_-(L),p_+(L))$, the function $(t,s) \mapsto \I_{\{t>s\}}(t,s) e^{-(t-s)L}$ belongs to $\sk^1_{2,q,\infty}$. In consequence, Lemma \ref{lem:ext_L2_kappa>0} implies $\cL_1 \in \sio^{1+}_{2,q,\infty}$.
    
    Next, we prove \eqref{item:ic_regularity_L2c}. Since $L^2_\rmc(\bR^{1+n}_+)$ is contained in $T^p_\beta$, applying \eqref{e:L1-1/2_sio}, Proposition \ref{prop:sio_tent_p<2}, Corollary \ref{cor:sio_tent_p>2}, and Lemma \ref{lemma:L1/2=nabla_L1}, we get \eqref{item:ic_regularity_L2c} as 
    \[ \|\cL_1(f)\|_{T^p_{\beta+1}} \lesssim \|f\|_{T^p_\beta}, \quad \|\nabla \cL_1(f)\|_{T^p_{\beta+\frac{1}{2}}} = \|\cL_{1/2}(f)\|_{T^p_{\beta+\frac{1}{2}}} \lesssim \|f\|_{T^p_\beta}. \]
    
    Then consider \eqref{item:ic_sol_L2c}. Fix $\phi \in C_\rmc^\infty(\overline{\bR^{1+n}_+})$. For \eqref{e:L1f_weak_make_sense_L2c}, Lemma \ref{lem:i-c_embed_test_tent} yields $\phi \in (T^p_\beta)'$, $\nabla \phi \in (T^p_{\beta+\frac{1}{2}})'$, and $\partial_t \phi \in (T^p_{\beta+1})'$. Using duality of tent spaces and \eqref{item:ic_regularity_L2c}, we get
    \[ \lhs \text{ \eqref{e:L1f_weak_make_sense_L2c}} \lesssim_\phi \|\cL_1(f)\|_{T^p_{\beta+1}} + \|\nabla \cL_1(f)\|_{T^p_{\beta+\frac{1}{2}}} + \|f\|_{T^p_\beta} \lesssim \|f\|_{T^p_\beta}. \]
    This proves \eqref{e:L1f_weak_make_sense_L2c}. For \eqref{e:L1f_weak_sol_L2c}, the proof is divided into two cases.

    \
    
    \paragraph{Case 1: $f \in L^2_\rmc((0,\infty);D(L))$}
    By definition of $\cL_1$ (cf.~\eqref{e:def_L1}), we have $\cL_1(f) \in C([0,\infty);L^2(\bR^n))$ with $\cL_1(f)(0)=0$, and $\cL_1(f)(t) \in D(L)$ for all $t>0$. Thus, we infer $L \cL_1(f) \in L^2(\bR^{1+n}_+)$, so
    \begin{equation}
        \partial_t \cL_1(f) = f + \Div(A\nabla \cL_1(f)) = f - L\cL_1(f)  \in L^2(\bR^{1+n}_+).
        \label{e:eq_L2c_case}
    \end{equation}
    Then \eqref{e:L1f_weak_sol_L2c} holds by testing $\phi \in C_\rmc^\infty(\bR^{1+n}_+)$ against \eqref{e:eq_L2c_case} and using integration by parts.

    \
    
    \paragraph{Case 2: $f \in L^2_\rmc(\bR^{1+n}_+)$} Lemma \ref{lemma:L1/2=nabla_L1} says that  it is equivalent to prove \eqref{e:ic_weak_sol_by_L1/2}. 
    Lemma \ref{lem:i-c_embed_test_tent} implies $\phi \in L^2(\bR^{1+n}_+)$, $\nabla \phi \in L^2_{-1/2}(\bR^{1+n}_+)$, and $\partial_t \phi \in L^2_{-1}(\bR^{1+n}_+)$. Thus, \eqref{e:ic_weak_sol_by_L1/2} readily follows by a standard density argument using Case 1 and continuity of $\cL_1:L^2(\bR^{1+n}_+) \rightarrow L^2_1(\bR^{1+n}_+)$ and $\cL_{1/2}:L^2(\bR^{1+n}_+) \rightarrow L^2_{1/2}(\bR^{1+n}_+)$. This completes the proof.
\end{proof}

%%%%% Existence
\subsection{Existence}
\label{ssec:exist_reg_ic}
In this section, we construct the extension of $\cL_1$, and prove Theorem \ref{thm:ic_sol} \eqref{item:ic_regularity} and \eqref{item:ic_sol}.

\begin{lemma}
    \label{lemma:ext_L1-1/2_tent}
    Let $\beta>-1/2$ and $p_L(\beta)<p \leq \infty$. Then, for $\kappa=1$ and $\kappa=1/2$, $\cL_\kappa$ extends to bounded operators from $T^{p}_\beta$ to $T^{p}_{\beta+\kappa}$. Furthermore, \eqref{e:L1/2=nabla_L1} holds for the extensions applied to $f \in T^{p}_\beta$.
\end{lemma}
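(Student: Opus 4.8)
The plan is to split the statement into the boundedness of the extensions and the verification of \eqref{e:L1/2=nabla_L1}, treating $p<\infty$ and $p=\infty$ separately, since $L^2_\rmc(\bR^{1+n}_+)$ is norm-dense in $T^p_\beta$ only when $p<\infty$.

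For boundedness, the estimates on $L^2_\rmc(\bR^{1+n}_+)$ are exactly the a priori bounds of Proposition \ref{prop:est_f_L2c}. When $p<\infty$, density of $L^2_\rmc(\bR^{1+n}_+)$ in $T^p_\beta$ extends $\cL_1$ and $\cL_{1/2}$ uniquely to bounded operators $T^p_\beta\to T^p_{\beta+1}$ and $T^p_\beta\to T^p_{\beta+\frac12}$. When $p=\infty$ density fails, and I would instead use the singular integral structure recorded in \eqref{e:L1-1/2_sio}: $\cL_1\in\sio^{1+}_{2,q,\infty}$ for $q\in(p_-(L),p_+(L))$ and $\cL_{1/2}\in\sio^{1/2+}_{2,q,\infty}$ for $q\in(q_-(L),q_+(L))$, whose required $L^2_\beta\to L^2_{\beta+\kappa}$ boundedness is automatic since $\kappa>0$ (Lemma \ref{lem:ext_L2_kappa>0}). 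Choosing $q\in(2,p_+(L))$, respectively $q\in(2,q_+(L))$, Corollary \ref{cor:sio_tent_p>2} yields bounded $\weakstar$-extensions $T^\infty_\beta\to T^\infty_{\beta+1}$ and $T^\infty_\beta\to T^\infty_{\beta+\frac12}$ that agree with $\cL_\kappa$ on $L^2_\rmc(\bR^{1+n}_+)$.

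For \eqref{e:L1/2=nabla_L1} when $p<\infty$, I would take $f_k\in L^2_\rmc(\bR^{1+n}_+)$ with $f_k\to f$ in $T^p_\beta$. Boundedness gives $\cL_1(f_k)\to\cL_1(f)$ in $T^p_{\beta+1}$ and $\cL_{1/2}(f_k)\to\cL_{1/2}(f)$ in $T^p_{\beta+\frac12}$, hence, by the local control \eqref{e:tent_truncate_Lq}, in $L^2_{\loc}(\bR^{1+n}_+)$ and therefore in $\scrD'(\bR^{1+n}_+)$. Since $\cL_{1/2}(f_k)=\nabla\cL_1(f_k)$ by Lemma \ref{lemma:L1/2=nabla_L1} and $\nabla$ is continuous on $\scrD'(\bR^{1+n}_+)$, the identity passes to the limit.

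The case $p=\infty$ is the main obstacle, because density fails and the extension is defined only by duality against the predual $T^1_{-\beta}$. Here I would argue through adjoints. Fix $\psi\in C^\infty_\rmc(\bR^{1+n}_+;\bC^n)$; by Lemma \ref{lem:i-c_embed_test_tent} one has $\psi\in T^1_{-\beta-\frac12}$ and $\Div\psi\in T^1_{-\beta-1}$, so the duality definition of the extensions gives $\langle\cL_{1/2}(f),\psi\rangle=\langle f,(\cL_{1/2})^\ast\psi\rangle$ and $\langle\cL_1(f),\Div\psi\rangle=\langle f,(\cL_1)^\ast\Div\psi\rangle$, both pairings being taken in $L^2(\bR^{1+n}_+)$. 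By Proposition \ref{prop:sio_duality} the adjoints $(\cL_1)^\ast$ and $(\cL_{1/2})^\ast$ are singular integral operators of type $-$ with kernels $(e^{-(s-t)L})^\ast=e^{-(s-t)L^\ast}$ and $(\nabla e^{-(s-t)L})^\ast=-e^{-(s-t)L^\ast}\Div$; evaluating the representation \eqref{e:def_sio-_T-representation}, valid on $\psi,\Div\psi\in L^2_\rmc(\bR^{1+n}_+)$ since $\kappa>0$, yields the pointwise identity $(\cL_{1/2})^\ast\psi=-(\cL_1)^\ast\Div\psi$ in $T^1_{-\beta}$. Hence $\langle\cL_{1/2}(f),\psi\rangle=-\langle\cL_1(f),\Div\psi\rangle$ for all such $\psi$, which is exactly \eqref{e:L1/2=nabla_L1}. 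I expect the only delicate bookkeeping to be matching the duality pairings and confirming that the type-$-$ representation applies to $\psi$ and $\Div\psi$; the algebraic cancellation is forced by $\nabla^\ast=-\Div$.
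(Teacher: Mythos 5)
Your proposal is correct, and for the extension itself and for the identity \eqref{e:L1/2=nabla_L1} when $p<\infty$ it follows essentially the same path as the paper (SIO machinery via \eqref{e:L1-1/2_sio}, Proposition \ref{prop:sio_tent_p<2} and Corollary \ref{cor:sio_tent_p>2}, then a density argument through $L^2_{\loc}$ convergence). The one place where you genuinely diverge is the case $p=\infty$. The paper approximates $f$ by truncations $f_R=f\I_{C_R}\in L^2_\rmc(\bR^{1+n}_+)$, which converge to $f$ in the $\weakstar$ topology (Lemma \ref{lemma:L2c-Tinfty_weak*dense}), applies Lemma \ref{lemma:L1/2=nabla_L1} to each $f_R$, and passes to the limit using the continuity of $\cL_\kappa^\ast:T^1_{-\beta-\kappa}\to T^1_{-\beta}$ together with $\Cc(\bR^{1+n}_+)\subset T^1_{-\beta-1}\cap T^1_{-\beta-1/2}$. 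You instead bypass any approximation of $f$ and prove the exact adjoint identity $(\cL_{1/2})^\ast\psi=-(\cL_1)^\ast\Div\psi$ from the kernel representation \eqref{e:def_sio-_T-representation} (legitimate here since $\kappa>0$) and the integration-by-parts relation $(\nabla e^{-\tau L})^\ast\psi=-e^{-\tau L^\ast}\Div\psi$; unwinding the duality definition of the extensions then gives \eqref{e:L1/2=nabla_L1} directly. Both arguments use the same underlying ingredients (boundedness of the adjoints on $T^1$ and membership of test functions in the preduals), but yours trades the limiting procedure for an explicit identity between the adjoint operators --- the identity the paper only obtains implicitly by applying Lemma \ref{lemma:L1/2=nabla_L1} to each truncation. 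The only bookkeeping to keep straight in your version is the vector-valued nature of $\cL_{1/2}$ (its kernel maps $L^2(\bR^n)$ into $L^2(\bR^n;\bC^n)$, so Proposition \ref{prop:sio_duality} is being used in its obvious vector-valued form) and the sesquilinearity of the pairings; neither causes a problem.
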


\begin{proof}
    Using \eqref{e:L1-1/2_sio}, the extension follows by applying Proposition \ref{prop:sio_tent_p<2} and Corollary \ref{cor:sio_tent_p>2}. In the following, the extension of $\cL_\kappa$ to tent spaces is still denoted by $\cL_\kappa$. We next divide the proof of \eqref{e:L1/2=nabla_L1} in two cases. \\ 

    \paragraph{Case 1: $p_L(\beta)<p<\infty$}
    Lemma \ref{lemma:L1/2=nabla_L1} says that  it holds when $f \in L^2_\rmc(\bR^{1+n}_+)$. A standard density argument extends it to $f\in T^{p}_\beta$, ensured by three ingredients: the density of $L^2_\rmc(\bR^{1+n}_+)$ in $T^p_\beta$; the continuity of $\cL_\kappa$ from $T^p_\beta$ to $T^{p}_{\beta+\kappa}$ for $\kappa=1,1/2$;  and the fact that any $C_\rmc^\infty({\bR^{1+n}_+})$-function belongs to $(T^p_{\beta+1})' \cap (T^p_{\beta+\frac{1}{2}})'$ (cf. Lemma \ref{lem:i-c_embed_test_tent}). \\ 

    \paragraph{Case 2: $p=\infty$}
    Let $\cL_\kappa^\ast$ be the adjoint of $\cL_\kappa$ with respect to $L^2(\bR^{1+n}_+)$-duality for $\kappa=1,1/2$. Lemma \ref{lemma:L2c-Tinfty_weak*dense} shows $L^2_\rmc(\bR^{1+n}_+)$ is $\weakstar$ dense in $T^\infty_\beta$, so the desired distribution equality follows by the continuity of $\cL_\kappa^\ast:T^1_{-\beta-\kappa} \to T^1_{-\beta}$ for $\kappa=1,1/2$, and the fact that $\Cc({\bR^{1+n}_+})$ is contained in $T^1_{-\beta-1}\cap T^1_{-\beta-\frac{1}{2}}$. This completes the proof.
\end{proof}

We can now prove Theorem \ref{thm:ic_sol} \eqref{item:ic_regularity} and \eqref{item:ic_sol}.

\begin{proof}[Proof of Theorem \ref{thm:ic_sol} \eqref{item:ic_regularity} and \eqref{item:ic_sol}]
    The extension is given by Lemma \ref{lemma:ext_L1-1/2_tent}. It also implies that if $u =  \cL_1(f)$ with $f\in T^p_\beta$, then $\nabla u =  \cL_{1/2}(f)$, so \eqref{item:ic_regularity} follows by the boundedness of $\cL_1$ and $\cL_{1/2}$.
    
    Then consider \eqref{item:ic_sol}. By \eqref{e:tent_truncate_Lq}, it is clear that both of $u$ and $\nabla u$ belong to $L^2_{\loc}(\bR^{1+n}_+)$, so $u \in L^2_{\loc}((0,\infty);H^1_{\loc}(\bR^n))$. The inequality \eqref{e:L1f_weak_make_sense} is also clear by Lemma \ref{lem:i-c_embed_test_tent} and \eqref{item:ic_regularity}, so it only remains to prove \eqref{e:L1f_weak_sol}. Here, we can follow the proof of Proposition \ref{prop:est_f_L2c} \eqref{item:ic_sol_L2c}, that is to say,
    \begin{itemize}
        \item Interpreting \eqref{e:L1f_weak_sol} as \eqref{e:ic_weak_sol_by_L1/2} using duality of tent spaces, since $\nabla \cL_1(f)$ agrees with $\cL_{1/2}(f)$ in $T^p_{\beta+\frac{1}{2}}$ and $\nabla \phi \in (T^p_{\beta+\frac{1}{2}})'$;
        \item Showing \eqref{e:ic_weak_sol_by_L1/2} by density with a sequence in $L^2_\rmc(\bR^{1+n}_+)$ that approximates $f$ in $T^p_\beta$ (or ${\weakstar}$ if $p=\infty$, see Case 2 of Lemma \ref{lemma:ext_L1-1/2_tent}).
    \end{itemize}
    All the ingredients have been provided in Lemma \ref{lem:i-c_embed_test_tent} and \ref{lemma:ext_L1-1/2_tent} and  details are  left to the reader.
\end{proof}

%%%%% Uniqueness
\subsection{Uniqueness}
\label{ssec:uniquess_ic}
In this section, we demonstrate Theorem \ref{thm:unique_ic} using the homotopy identity. We start with several preliminary lemmas.

\begin{lemma}
    \label{lemma:embed_tent_L2beta(a,b)_p<2}
    Let $0<p \leq 2$ and $\beta \in \bR$. For $0<a<b<\infty$, $T^{p}_\beta$ can be embedded into $L^2_\beta((a,b) \times \bR^n) := L^2((a,b),t^{-2\beta} dt;L^2(\bR^n))$ with
    \[ \|u\|_{L^2_\beta((a,b) \times \bR^n) } \lesssim_{p} a^{-\frac{n}{2p}} b^{\frac{n}{4}} \|u\|_{T^{p}_\beta}. \]
\end{lemma}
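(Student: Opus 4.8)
The plan is to read the statement as boundedness of the truncation map $\iota: u \mapsto u\,\I_{(a,b)\times\bR^n}$ from $T^p_\beta$ into the single target space $Y:=L^2_\beta((a,b)\times\bR^n)$, and to prove the bound at the two endpoints $p\le 1$ and $p=2$, filling the range $1<p<2$ by interpolation. The decisive feature is that $Y$ does \emph{not} depend on $p$, so this is a one-operator interpolation with varying domain and fixed codomain. Recall throughout that $m=2$ and $[p,2]=\frac1p-\frac12\ge0$.

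The endpoint $p=2$ is immediate: since $T^2_\beta=L^2_\beta(\bR^{1+n}_+)$, the map $\iota$ is mere restriction and $\|\iota u\|_Y\le\|u\|_{T^2_\beta}$, while the asserted factor $a^{-n/4}b^{n/4}\ge1$ makes the inequality trivially valid. The substance is the endpoint $p\le1$, which I would treat by atomic decomposition. Write $u=\sum_k\lambda_k a_k$ with $T^p_\beta$-atoms $a_k$ associated to balls $B_k$ and $\big(\sum_k|\lambda_k|^p\big)^{1/p}\lesssim\|u\|_{T^p_\beta}$. The key observation is a support constraint: since $\supp a_k\subseteq[0,r(B_k)^2]\times B_k$, the truncation $a_k\I_{(a,b)\times\bR^n}$ vanishes unless $r(B_k)^2>a$, i.e.\ $|B_k|\eqsim r(B_k)^n>c\,a^{n/2}$. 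For exactly those atoms that meet the strip,
\[
\|a_k\|_Y\le\|a_k\|_{L^2_\beta(\bR^{1+n}_+)}\le|B_k|^{-[p,2]}\lesssim a^{-\frac n2[p,2]},
\]
using $[p,2]\ge0$. Summing and invoking $\ell^p\hookrightarrow\ell^1$ for $p\le1$,
\[
\|\iota u\|_Y\le\sum_k|\lambda_k|\,\|a_k\|_Y\lesssim a^{-\frac n2[p,2]}\sum_k|\lambda_k|\le a^{-\frac n2[p,2]}\Big(\sum_k|\lambda_k|^p\Big)^{1/p}\lesssim a^{-\frac n2[p,2]}\|u\|_{T^p_\beta},
\]
and finally $a^{-\frac n2[p,2]}=a^{-\frac n{2p}}a^{\frac n4}\le a^{-\frac n{2p}}b^{\frac n4}$ since $a\le b$.

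For $1<p<2$ I would interpolate $\iota$ between the endpoints $p_0=1$ (operator norm $\lesssim a^{-n/4}$, from the case above) and $p_1=2$ (operator norm $\lesssim1$), using $[T^1_\beta,T^2_\beta]_\theta=T^p_\beta$ and Stein interpolation on tent spaces (\cite{Harboure-Torrea-Viviani1991VectorValued}). With $\frac1p=(1-\theta)+\frac\theta2$ one has $1-\theta=2[p,2]$, so the interpolated norm is $\lesssim(a^{-n/4})^{2[p,2]}=a^{-\frac n2[p,2]}\le a^{-\frac n{2p}}b^{\frac n4}$, matching the two pure cases.

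The main obstacle is the band $1<p<2$, where no atomic decomposition is at hand; this is exactly why I route it through interpolation, which is painless here precisely because the codomain $Y$ is $p$-independent. The only other point requiring care is the justification that the atomic series converges in $Y$: by the estimate above the partial sums are $Y$-Cauchy (as $\sum_k|\lambda_k|^p<\infty$), and comparing with convergence in $T^p_\beta$ (hence in $L^2_{\loc}$ via \eqref{e:tent_truncate_Lq}) identifies the $Y$-limit with $u$ on the strip; this is routine and I would state it only briefly.
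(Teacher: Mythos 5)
Your proposal is correct, but it follows a genuinely different route from the paper. The paper's proof is a single self-contained argument covering all $0<p\le 2$ at once: after reducing to $\beta=0$, it observes that $(a,b)\times B(0,R)$ is contained in the cone $\Gamma_\lambda(x)$ of aperture $\lambda=2Ra^{-1/2}$ for \emph{every} $x\in B(0,R)$, so that $\|u\|_{L^2((a,b)\times B(0,R))}\le b^{n/4}\inf_{x\in B(0,R)}\cA^{(\lambda)}_{0;2}(u)(x)$; bounding the infimum by the $L^p$-average over $B(0,R)$ and invoking the change-of-aperture estimate \eqref{e:tent_change_ap} gives exactly the factor $b^{n/4}R^{-n/p}\lambda^{n/p}\eqsim a^{-n/(2p)}b^{n/4}$, and one lets $R\to\infty$. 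Your argument instead splits into three cases and imports two black boxes: the atomic decomposition of $T^p_\beta$ for $p\le 1$ (where the support condition $r(B_k)^2>a$ on contributing atoms transparently produces the $a^{-\frac n2[p,2]}$ factor) and complex interpolation of tent spaces for $1<p<2$. Your computations all check out and yield the same constant; the identification of the $Y$-limit of the atomic series with $\iota u$ via \eqref{e:tent_truncate_Lq} is handled adequately. Two small remarks: what you need for $1<p<2$ is the complex interpolation identity $[T^1_\beta,T^2_\beta]_\theta=T^p_\beta$ (available in \cite{amenta18weighted}), not Stein interpolation of an analytic family --- $\iota$ is a single operator with fixed codomain --- and you should cite that rather than \cite{Harboure-Torrea-Viviani1991VectorValued}; also note that $T^1_\beta\not\subset T^2_\beta$, so the interpolation formally proceeds from the common domain $T^1_\beta\cap T^2_\beta$, which is harmless here since $\iota$ is pointwise multiplication by an indicator. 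On balance the paper's proof is shorter and needs only change of aperture, while yours makes the origin of the exponent $a^{-n/(2p)}$ more conceptual at the cost of heavier machinery.
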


\begin{proof}
    It suffices to prove the case $\beta=0$. For any $R>a^{1/2}$, observe that $(a,b) \times B(0,R) \subset \Gamma_\lambda(x)$ for any $x \in B(0,R)$ with $\lambda:=2Ra^{-1/2}>1$, where $\Gamma_\lambda(x)$ is the cone at $x$ of aperture $\lambda$ and homogeneity $m=2$, given by \eqref{e:def_cone}. Then applying \eqref{e:tent_change_ap} for $p \leq 2$ yields
    \begin{align*}
    \|u\|_{L^2((a,b) \times B(0,R))}
     &\leq b^{n/4} \left ( \int_a^b \int_{B(0,R)} |u(t,y)|^2 \frac{dt}{t^{n/2}} dy \right )^{1/2} \\ 
     &\leq b^{n/4} \inf_{x \in B(0,R)} \cA^{(\lambda)}_{0;2}(u)(x) \\
     &\leq b^{n/4} \left ( \fint_{B(0,R)} \cA^{(\lambda)}_{0;2}(u)(x)^p dx \right )^{1/p} \\
     &\lesssim b^{\frac{n}{4}} R^{-\frac{n}{p}} \|u\|_{T^{p;2}_{\lambda,0}} \lesssim b^{\frac{n}{4}} R^{-\frac{n}{p}} \lambda^{\frac{n}{p}} \|u\|_{{T^{p}_0}} \lesssim a^{-\frac{n}{2p}} b^{\frac{n}{4}} \|u\|_{T^{p}_0},
    \end{align*}
    where $\cA^{(\lambda)}_{0;2}$ is defined in \eqref{e:def_A_concial_square}. The controlling constant is independent of $R$, so we conclude by letting $R$ tend to infinity.
\end{proof}

\begin{lemma}
    \label{lemma:homotopy-condition}
    Let $0< p \leq \infty$ and $\beta \in \bR$. Let $u$ be in $T^{p}_{\beta}$. Then for $0<a<b<\infty$, the function 
    \[ F(x):=\left ( \int_a^b \int_{B(x,b^{1/2})} |u(t,y)|^2 dtdy \right )^{1/2} \]
    belongs to $L^p(\bR^n)$ with $\|F\|_p \lesssim_{a,b,\beta} \|u\|_{T^{p}_{\beta}}$.
\end{lemma}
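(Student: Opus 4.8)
The plan is to control $F$ pointwise by a conical square function of $u$ with a suitably enlarged aperture, and then invoke the change-of-aperture estimate \eqref{e:tent_change_ap}; recall that $m=2$ is fixed. The only genuinely new input is a simple geometric observation tied to the strip $(a,b)$.

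First I would fix the aperture $\lambda := (b/a)^{1/2} > 1$ and observe that for every $t \in (a,b)$ one has $\lambda t^{1/2} \geq \lambda a^{1/2} = b^{1/2}$, so that $B(x, b^{1/2}) \subset B(x, \lambda t^{1/2})$. Consequently,
\[ F(x)^2 = \int_a^b \int_{B(x,b^{1/2})} |u(t,y)|^2 \, dt\, dy \leq \int_a^b \int_{B(x,\lambda t^{1/2})} |u(t,y)|^2 \, dt\, dy. \]
Next I would insert the weights $t^{-2\beta} t^{-n/2}$ appearing in the conical square function $\cA^{(\lambda)}_{\beta;2}$. Since $t \mapsto t^{2\beta + n/2}$ is bounded above on the compact interval $[a,b]$ by a constant $C = C(a,b,\beta,n)$, writing $|u|^2 = t^{2\beta+n/2}\cdot\big(t^{-2\beta}|u|^2 t^{-n/2}\big)$ gives
\[ F(x)^2 \leq C \int_a^b \int_{B(x,\lambda t^{1/2})} t^{-2\beta} |u(t,y)|^2 \, t^{-n/2} \, dt\, dy \leq C\, \cA^{(\lambda)}_{\beta;2}(u)(x)^2, \]
the last inequality because enlarging the $t$-range from $(a,b)$ to $(0,\infty)$ only increases the nonnegative integral.

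Taking $L^p$ norms for $0 < p < \infty$ and using $\|\cA^{(\lambda)}_{\beta;2}(u)\|_p = \|u\|_{T^{p;2}_{\lambda,\beta}}$ together with the change-of-aperture bound \eqref{e:tent_change_ap}, namely $\|u\|_{T^{p;2}_{\lambda,\beta}} \lesssim \max\{\lambda^{n/2},\lambda^{n/p}\}\|u\|_{T^p_\beta}$, yields the claim with a constant depending only on $a,b,\beta$ (and the fixed $n,p$), since $\lambda$ depends only on $a,b$.

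The one step requiring separate care is the endpoint $p = \infty$, since \eqref{e:tent_change_ap} is stated only for $p<\infty$ and $T^\infty_\beta$ is the Carleson-type space $T^{\infty;2}_{\beta,(0)}$. Here I would argue directly: for fixed $x$ take the admissible ball $B = B(x,b^{1/2})$, whose radius satisfies $r(B)^2 = b$, so that the defining supremum gives
\[ \|u\|_{T^\infty_\beta}^2 \geq \frac{1}{|B|} \int_0^{b} \int_{B} t^{-2\beta}|u(t,y)|^2\, dt\, dy \geq \frac{\min_{[a,b]} t^{-2\beta}}{|B(x,b^{1/2})|} \int_a^b \int_{B(x,b^{1/2})} |u(t,y)|^2 \, dt\, dy. \]
Since $|B(x,b^{1/2})| \eqsim_n b^{n/2}$ and $\min_{[a,b]} t^{-2\beta} > 0$ depend only on $a,b,\beta,n$, this gives $F(x) \lesssim_{a,b,\beta} \|u\|_{T^\infty_\beta}$ for a.e.\ $x$, hence $\|F\|_\infty \lesssim_{a,b,\beta} \|u\|_{T^\infty_\beta}$. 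I do not anticipate any serious obstacle here; the main thing to get right is the bookkeeping of the power weights on the strip $[a,b]$ and the consistent treatment of the $p=\infty$ endpoint separately from the aperture-change argument.
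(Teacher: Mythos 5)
Your proof is correct, and it differs from the paper's in the finite-$p$ case. For $0<p<\infty$ the paper covers $B(x,b^{1/2})$ by finitely many translates $B(x+z_k,a^{1/2})\subset B(x+z_k,t^{1/2})$ (for $t\in(a,b)$), bounds $F(x)$ by a finite sum of aperture-$1$ cone integrals at the shifted points, and concludes by translation invariance of the $L^p$ (quasi-)norm; you instead absorb the whole strip into a single cone of enlarged aperture $\lambda=(b/a)^{1/2}$ and invoke the change-of-aperture estimate \eqref{e:tent_change_ap}. Both are legitimate: the covering argument is more elementary in that it only uses translation invariance (at the cost of handling the $N$-term sum, which for $p\le 1$ requires the $p$-subadditivity $\|\sum_k g_k\|_p^p\le\sum_k\|g_k\|_p^p$), whereas your route leans on the quantitative aperture-change theorem but avoids any summation and so treats $p\le 1$ with no extra care. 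Your aperture-change argument is in fact exactly the mechanism the paper uses in the neighboring Lemma \ref{lemma:embed_tent_L2beta(a,b)_p<2}, and the constants end up comparable (both controlled by a power of $b/a$ times the weight factor $\max\{a^{\beta},b^{\beta}\}$ and $b^{n/4}$). For $p=\infty$ your direct estimate with the single ball $B(x,b^{1/2})$, $r(B)^2=b$, is essentially identical to the paper's.
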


\begin{proof}
    First consider $0< p < \infty$. Pick $(z_k)_{1 \leq k \leq N}$ in $B(0,b^{1/2})$ such that $B(0,b^{1/2}) \subset \bigcup_{k=1}^N B(z_k,a^{1/2})$. Thus, for any $x \in \bR^n$, $B(x,b^{1/2}) \subset \bigcup_{k=1}^N B(x+z_k,a^{1/2})$. We hence get
    \begin{align*}
    F(x)
    &\leq \sum_{k=1}^N b^{\frac{n}{4}} \max\{a^\beta,b^\beta\} \left ( \int_a^b \int_{B(x+z_k,a^{1/2})} |t^{-\beta}u(t,y)| ^2 \frac{dt}{t^{n/2}}dy \right )^{1/2} \\ 
    &\le b^{\frac{n}{4}} \max\{a^\beta,b^\beta\} \sum_{k=1}^N \left ( \int_a^b \int_{B(x+z_k,t^{1/2})} |t^{-\beta}u(t,y)| ^2 \frac{dt}{t^{n/2}}dy \right )^{1/2}.
\end{align*}
Therefore, by taking $L^p$-norms, we conclude that
\begin{equation}
    \|F\|_p \lesssim_N b^{\frac{n}{4}} \max\{a^\beta,b^\beta\} \|u\|_{T^{p}_{\beta}}.
    \label{e:L2loc_tent_precise}
\end{equation}

For $p=\infty$, note that $(a,b) \times B(x,b^{1/2}) \subset (0,b) \times B(x,b^{1/2})$, so
\begin{align*}
    F(x) 
    &\lesssim b^{\frac{n}{4}} \max\{a^\beta,b^\beta\} \left ( \int_a^b \fint_{B(x,b^{1/2})} |t^{-\beta} u(t,y)|^2 dtdy \right )^{1/2} \\
    &\leq b^{\frac{n}{4}} \max\{a^\beta,b^\beta\} \sup_{B:x \in B} \left ( \int_0^{r(B)^2} \fint_B |t^{-\beta} u(t,y)|^2 dtdy \right )^{1/2}.
\end{align*}
Taking $L^\infty$-norm on both sides implies \eqref{e:L2loc_tent_precise}.
\end{proof}

The next lemma is a standard result for such parabolic equations, and we use the form given in  \cite[Proposition 3.6]{auscher-monniaux-portal2019existence}.

\begin{lemma}
    \label{lemma:Caccioppoli}
    Let $0<a<b<\infty$, $B \subset \bR^n$ be a ball, and $f$ be in $L^2((a,b) \times 2B)$. Let $u \in L^2((a,b);H^1(2B))$ be a weak solution to \eqref{e:def_ic} with source term $f$ in $(a,b) \times 2B$. Then, $u$ lies in $C([a,b];L^2(B))$ and
    \begin{align*}
        \|u(b)\|_{L^2(B)}^2 \lesssim
        &\left ( \frac{\Lambda_1^2}{\Lambda_0 r(B)^2}+\frac{1}{b-a} \right ) \int_a^b \|u(s)\|_{L^2(2B)}^2 ds \\
        &+ (b-a) \int_a^b \|f(s)\|_{L^2(2B)}^2 ds.
    \end{align*}
\end{lemma}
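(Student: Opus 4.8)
The plan is to establish a localized energy (Caccioppoli) inequality by testing the weak formulation \eqref{e:def_weaksol_ic} against a spatial cutoff of the solution itself. First I would fix a cutoff $\chi \in C_\rmc^\infty(2B)$ with $\chi \equiv 1$ on $B$, $0 \le \chi \le 1$, and $|\nabla\chi| \lesssim r(B)^{-1}$. The heart of the matter is the differential energy identity
\[ \frac{1}{2}\frac{d}{dt}\|\chi u(t)\|_{L^2}^2 + \Re\int (A\nabla u)\cdot\overline{\nabla(\chi^2 u)} = \Re\int f\,\overline{\chi^2 u}, \]
obtained formally by inserting $\phi = \chi^2 u$ into \eqref{e:def_weaksol_ic} and using that $\chi$ is time-independent, so $\Re\int \partial_t u\,\overline{\chi^2 u} = \tfrac12\frac{d}{dt}\|\chi u\|_{L^2}^2$. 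Since a weak solution only satisfies $u \in L^2((a,b);H^1(2B))$ with its time derivative in a dual space, this computation is not immediately licit; I would make it rigorous by a time-mollification (Steklov average) argument, deriving the identity for the averaged functions and passing to the limit, which simultaneously yields the asserted continuity $u \in C([a,b];L^2(B))$.

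Next I would process the bilinear term. Writing $\nabla(\chi^2 u) = \chi^2\nabla u + 2\chi(\nabla\chi)\,u$, the ellipticity condition \eqref{e:def_A_uniformly_ell} gives $\Re\int (A\nabla u)\cdot\overline{\chi^2\nabla u} \ge \Lambda_0\int\chi^2|\nabla u|^2$, while the cross term is bounded by $2\Lambda_1\int\chi|\nabla\chi||\nabla u||u|$. A weighted Young inequality splits the cross term as $\Lambda_0\int\chi^2|\nabla u|^2 + \frac{\Lambda_1^2}{\Lambda_0}\int|\nabla\chi|^2|u|^2$, the first summand being absorbed by the positive gradient term. The source term is handled by $\int|f|\chi^2|u| \le \frac{b-a}{2}\int\chi^2|f|^2 + \frac{1}{2(b-a)}\int\chi^2|u|^2$, which is precisely what manufactures the factors $(b-a)$ and $(b-a)^{-1}$ appearing in the claimed estimate.

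Integrating the resulting differential inequality from $\tau$ to $b$ then yields, for a.e.\ $\tau \in (a,b)$,
\[ \|\chi u(b)\|_{L^2}^2 \lesssim \|\chi u(\tau)\|_{L^2}^2 + \frac{\Lambda_1^2}{\Lambda_0}\int_a^b\!\!\int|\nabla\chi|^2|u|^2 + (b-a)\int_a^b\!\!\int\chi^2|f|^2 + \frac{1}{b-a}\int_a^b\!\!\int\chi^2|u|^2, \]
where I have enlarged each $(\tau,b)$-integral to $(a,b)$. The one quantity I cannot bound directly is the ``initial'' term $\|\chi u(\tau)\|_{L^2}^2$; the standard device is to average the inequality over $\tau \in (a,b)$, replacing it by $\frac{1}{b-a}\int_a^b\|\chi u(\tau)\|_{L^2}^2\,d\tau \le \frac{1}{b-a}\int_a^b\|u\|_{L^2(2B)}^2$. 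Finally I would insert $\chi \equiv 1$ on $B$, $\supp\chi \subset 2B$, and $|\nabla\chi|^2 \lesssim r(B)^{-2}$ to rewrite all right-hand side integrals as norms over $(a,b)\times 2B$, arriving at the stated inequality.

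I expect the main obstacle to be the rigorous justification of the energy identity, i.e.\ legitimizing $\phi = \chi^2 u$ as a test function and the integration by parts in time despite $u$ having only $L^2((a,b);H^1)$ regularity. This is exactly the point where the time-mollification must be carried out carefully, and it is also where the continuity statement $u\in C([a,b];L^2(B))$ is extracted; everything downstream is elementary Young and Cauchy--Schwarz bookkeeping.
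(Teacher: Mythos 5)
Your argument is correct: testing with $\chi^2 u$ (justified by Steklov averaging, which also yields the $C([a,b];L^2(B))$ continuity), absorbing the cross term via ellipticity and Young's inequality with the weight $\Lambda_1^2/\Lambda_0$, splitting the source term with weight $b-a$, and averaging the resulting inequality over the initial time $\tau\in(a,b)$ reproduces exactly the stated constants. Note that the paper does not prove this lemma at all --- it imports it verbatim from \cite[Proposition 3.6]{auscher-monniaux-portal2019existence} --- and your proof is precisely the standard Caccioppoli-type energy argument underlying that reference, so there is nothing to compare beyond saying your write-up supplies the proof the paper chose to cite.
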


We also recall a standard corollary of Banach--Steinhaus theorem.

\begin{lemma}
    \label{lemma:pairing_cv}
    Let $X$ be a Banach space and $X^\ast$ be its dual. Let $(x_j)$ be a sequence in $X$ that converges to $x$ in $X$ and $(y_j)$ be a sequence in $X^\ast$ that weakly$^\ast$ converges to $y$ in $X^\ast$. Then, the sequence of pairing $(\langle y_j,x_j \rangle)$ converges to $\langle y,x \rangle$.
\end{lemma}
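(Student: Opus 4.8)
The plan is to reduce the statement to a triangle-inequality split and then control each piece separately, with the uniform boundedness principle doing the essential work. First I would invoke Banach--Steinhaus: since $(y_j)$ converges in the weak${}^\ast$ topology, for every fixed $z \in X$ the scalar sequence $\langle y_j, z \rangle$ converges and is therefore bounded. Thus $(y_j)$ is pointwise bounded on $X$, and the uniform boundedness principle upgrades this to a uniform norm bound $C := \sup_j \|y_j\|_{X^\ast} < \infty$.

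Next I would write the decomposition
\[ \langle y_j, x_j \rangle - \langle y, x \rangle = \langle y_j, x_j - x \rangle + \langle y_j - y, x \rangle, \]
and estimate the two terms on the right. For the first, bounding by the operator norm gives $|\langle y_j, x_j - x \rangle| \leq C \|x_j - x\|_X$, which tends to $0$ precisely because $x_j \to x$ in $X$. For the second term, $\langle y_j - y, x \rangle \to 0$ is nothing but the definition of weak${}^\ast$ convergence evaluated at the fixed vector $x$. Summing the two estimates yields $\langle y_j, x_j \rangle \to \langle y, x \rangle$, which is the assertion.

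The only point requiring care — and the reason the lemma is not completely trivial — is the control of the mixed term $\langle y_j, x_j - x \rangle$: weak${}^\ast$ convergence by itself gives no information about the operator norms $\|y_j\|_{X^\ast}$, so without the uniform bound $C$ this term could not be handled. The role of Banach--Steinhaus is exactly to convert the pointwise weak${}^\ast$ boundedness of $(y_j)$ into the uniform norm bound that the first estimate needs; this is the heart of the argument, and the remainder is a routine splitting. (One should note that the symmetric hypothesis pairing — strong convergence of $y_j$ against weak convergence of $x_j$ — would fail in general, so it is important that here the \emph{vectors} converge strongly and the \emph{functionals} only weak${}^\ast$.)
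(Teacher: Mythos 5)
Your proof is correct and is exactly the standard Banach--Steinhaus argument the paper has in mind: the paper states this lemma without proof, calling it ``a standard corollary of Banach--Steinhaus theorem,'' and your decomposition with the uniform norm bound on $(y_j)$ is precisely that corollary. Nothing to add.
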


We now prove Theorem \ref{thm:unique_ic}.

\begin{proof}[Proof of Theorem \ref{thm:unique_ic}]
    Let $u$ and $\tilu$ be two global weak solutions in $T^{p}_{\beta+1}$ to \eqref{e:def_ic} with same source term $f \in T^p_\beta$. Define $v:=u-\tilu$. It lies in $L^2_{\loc}((0,\infty);H^1_{\loc}(\bR^n)) \cap T^{p}_{\beta+1}$ and is a global weak solution to \eqref{e:def_ic} with null source term. For $0<a<b<\infty$ and $\gamma>0$, we have
    \[ \int_{\bR^n} \left ( \int_a^b \int_{B(x,b^{1/2})} |v(t,y)|^2 dtdy \right )^{1/2} e^{-\gamma |x|^2} dx \lesssim_{\gamma,p,a,b,\beta} \|v\|_{T^{p}_\beta} < \infty \]
    by Lemma \ref{lemma:embed_tent_L2beta(a,b)_p<2} for $p_L(\beta)<p \leq 2$ and Lemma \ref{lemma:homotopy-condition} for $2 \leq p \leq \infty$. Thus, \cite[Theorem 5.1]{auscher-monniaux-portal2019existence} ensures that the homotopy identity holds. More precisely, for $0<s<t<\infty$ and $h \in C_\rmc^\infty(\bR^n)$, we have
    \begin{equation}
        \int_{\bR^n} v(t,x) \ovh(x) dx = \int_{\bR^n} v(s,x) \overline{((e^{-(t-s)L})^\ast h)}(x) dx.
    \label{e:ic_homotopy}
    \end{equation}

    We now take limits as $s \to 0$ on both sides to show $v(t)=0$ in $\scrD'(\bR^n)$, hence proving $v \equiv 0$. The proof is divided into three cases. \\
    
    \paragraph{Case 1: $2 \leq p < \infty$}
    Let $\delta>0$ be a constant. Using \eqref{e:Epdelta_change} for $t \in (0,\delta]$, we have $\|v(t)\|_{E^p_\delta} \lesssim \|v(t)\|_{E^p_{t/16}}$. Since $v$ lies in $L^2_{\loc}((0,\infty);H^1_{\loc}(\bR^n))$, Lemma \ref{lemma:Caccioppoli} yields for any $t \in (0,\delta]$,
    \begin{equation}
        \begin{aligned}
            &\left ( \int_{\bR^n} \left ( \fint_{B(x,\frac{\sqrt{t}}{4})} |v(t,y)|^2 dy \right )^{p/2} dx \right )^{1/p} \\ 
            &\quad\lesssim \left ( \int_{\bR^n} \left ( \fint_{t/2}^t \fint_{B(x,\frac{\sqrt{t}}{2})} |v(s,y)|^2 dsdy \right )^{p/2} dx \right )^{1/p} 
            \\ 
            &\quad\lesssim t^{\beta+\frac{1}{2}} \|v\|_{T^{p}_{\beta+1}}.
        \end{aligned}
        \label{e:unique_v(t)_Ep_p>2}
    \end{equation}
    Hence $\|v(t)\|_{E^p_{t/16}}\lesssim t^{\beta+\frac{1}{2}} \|v\|_{T^{p}_{\beta+1}}.$
        Thus, $v(t)$ tends to 0 in $E^p_\delta$ as $t \to 0$. Thanks to \cite[Lemma 4.7]{auscher-monniaux-portal2019existence}, we have $(e^{-(t-s)L})^\ast h$ tends to $(e^{-tL})^\ast h$ in $E^{p'}_\delta$ as $s \to 0$. By Lemma \ref{lemma:pairing_cv} with $X=E^p_\delta$, the right-hand side of \eqref{e:ic_homotopy} converges to 0 as $s \to 0$, and we obtain $v(t)=0$ in $\scrD'(\bR^n)$. \\

    \paragraph{Case 2: $p=\infty$} Let $\delta>0$ be a constant. For any $t \in (0,\delta]$,
    \begin{align*}
    \|v(t)\|_{E^\infty_\delta}
    &\lesssim \|v(t)\|_{E^\infty_{t/16}} = \esssup{x \in \bR^n} \left ( \fint_{B(x,\frac{\sqrt{t}}{4})} |v(t,y)|^2 dy \right )^{1/2} \\ 
    &\lesssim \esssup{x \in \bR^n} \left ( \fint_{t/2}^t \fint_{B(x,\frac{\sqrt{t}}{2})} |v(s,y)|^2 dsdy \right )^{1/2} \lesssim t^{\beta+\frac{1}{2}} \|v\|_{T^{\infty}_{\beta+1}}.
    \end{align*}
    Thus, $v(t)$ converges to 0 in $E^\infty_\delta$ as $t \to 0$. It is also shown in \cite[Lemma 4.7]{auscher-monniaux-portal2019existence} that $(e^{-(t-s)L})^\ast h$ tends to $(e^{-tL})^\ast h$ in $E^1_\delta$ as $s \to 0$. Thus, we get $v(t)=0$ in $\scrD'(\bR^n)$ by the same argument as in Case 1. \\

    \paragraph{Case 3: $p_L(\beta)<p<2$} 
    We claim that,
    \begin{equation}
        \|v(t)\|_p \lesssim t^{\beta+\frac{1}{2}} \|v\|_{T^{p}_{\beta+1}}, \quad \forall 0<p \leq 2, ~ \forall t>0.
        \label{e:v(t)Lp}
    \end{equation}
    Indeed, using H\"older's inequality and \eqref{e:unique_v(t)_Ep_p>2}, we have
    \[ \|v(t)\|_p = \left ( \int_{\bR^n} \fint_{B(x,\frac{\sqrt{t}}{4})} |v(t,y)|^p dydx \right )^{1/p} \lesssim t^{\beta+\frac{1}{2}} \|v\|_{T^{p}_{\beta+1}}. \]
    The claim hence follows. Moreover, Lemma \ref{lemma:Caccioppoli} and Lemma \ref{lemma:embed_tent_L2beta(a,b)_p<2} imply
    \begin{align*}
    \|v(t)\|_{L^2(B(x,R))}^2
     &\lesssim \left ( \frac{\Lambda_1^2}{\Lambda_0 R^2}+\frac{1}{t} \right ) t^{2(\beta+1)} \int_{t/2}^t \|s^{-(\beta+1)}v(s)\|_{L^2(B(x,2R))}^2 ds \\ 
     &\lesssim \left ( \frac{\Lambda_1^2}{\Lambda_0 R^2}+\frac{1}{t} \right ) t^{2(\beta+1)+n[2,p]} \|v\|_{T^{p}_{\beta+1}}^2.
    \end{align*}
    By letting $R \to \infty$, we get
    \begin{equation}
        \|v(t)\|_2 \lesssim t^{\beta+\frac{1}{2}-\frac{n}{2}[p,2]} \|v\|_{T^{p}_{\beta+1}}, \quad \forall 0<p \leq 2, ~ \forall t>0.
        \label{e:v(t)L2}
    \end{equation}
    As $p_L(\beta)<p<2$ implies
    \[ \frac{1}{p}-\frac{2\beta+1}{n} < \frac{1}{p_-(L)}, \]
    there exists $q \in (p,2) \cap (p_-(L),2)$ such that
    \[ \frac{1}{p} - \frac{2\beta+1}{n} < \frac{1}{q}. \]
    Interpolating \eqref{e:v(t)Lp} and \eqref{e:v(t)L2} yields
    \[ \|v(t)\|_q \lesssim t^{\beta+\frac{1}{2}-\frac{n}{2}[p,q]} \|v\|_{T^{p}_{\beta+1}}, \quad \forall q \in (p,2), ~ \forall t>0. \]
    Thus, $v(s)$ tends to 0 in $L^q(\bR^n)$ as $s \to 0$. Thanks to \cite[Proposition 3.15]{Auscher2007Memoire}, we also know that $(e^{-(t-s)L})^\ast h$ tends to $(e^{-tL})^\ast h$ in $L^{q'}(\bR^n)$ as $s \to 0$ since $2<q'<p_+(L^\ast) = (p_-(L))'$. Using \eqref{e:ic_homotopy} and Lemma \ref{lemma:pairing_cv} with $X=L^q(\bR^n)$, we conclude that $v(t)=0$ in $\scrD'(\bR^n)$. 
    
    This completes the proof.
\end{proof}
%%%%% Section: Maximal regularity
\section{Maximal regularity on tent spaces}
\label{sec:mr}

In this section, we prove maximal regularity of the system \eqref{e:def_ic}, \textit{i.e.}, Theorem \ref{thm:ic_sol} \eqref{item:ic_max_reg}, and apply it to investigate the boundary behavior of global weak solutions to \eqref{e:def_ic}, see \eqref{item:ic_boundary_whitney} and \eqref{item:ic_boundary_LpEp}. Throughout the section, the notation is the same as in Section \ref{sec:inhomo} without special mention.

%%%%% Max-Reg Operators
\subsection{Maximal regularity operator}
\label{ssec:MRO_SIO}
Define the \textit{maximal regularity operator} $\cL_0$ on $L^2((0,\infty);D(L))$ by
\begin{equation}
    \cL_0(f)(t,y) := \int_0^t (Le^{-(t-s)L}f(s))(y) ds, \quad \mathrm{a.e.}\ (t,y)\in \bR^{1+n}_+.
    \label{e:def_mr}
\end{equation}
The integral {makes sense as an $L^2(\bR^n)$-valued Bochner integral.} Recall that the celebrated de Simon's theorem \cite{deSimon1964MRO_L2} asserts $\cL_0$ can be extended by density to a bounded operator on $L^2(\bR^{1+n}_+)$. Denote by $\Tilde{\cL}_0$ this extension.

\begin{lemma}
    \label{lemma:L0_sio}
    $\Tilde{\cL}_0$ belongs to $\sio^{0+}_{2,q,\infty}$ for any $q \in (p_-(L),p_+(L))$.
\end{lemma}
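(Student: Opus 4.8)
The plan is to verify directly the two defining conditions of $\sio^{0+}_{2,q,\infty}$ in Definition~\ref{def:sio+}, the natural candidate kernel being
\[ K(t,s) := \I_{\{t>s\}}(t,s)\, L e^{-(t-s)L}, \qquad (t,s) \in \Delta^c. \]
The boundedness of $\Tilde{\cL}_0$ from $L^2(\bR^{1+n}_+)$ to $L^2_0(\bR^{1+n}_+)=L^2(\bR^{1+n}_+)$ is exactly de Simon's theorem \cite{deSimon1964MRO_L2}, which is why $\Tilde{\cL}_0$ was introduced in the first place. So the two remaining tasks are to show that $K \in \sk^0_{2,q,\infty}$ and that $\Tilde{\cL}_0$ admits the representation \eqref{e:def_T-representation} with kernel $K$.

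For the membership $K \in \sk^0_{2,q,\infty}$ I would check the three items of Definition~\ref{def:sk(kappa,m,q,M)} with $\kappa=0$, $m=2$, and $M=\infty$. Strong measurability follows from the analyticity of $\tau \mapsto Le^{-\tau L}$ on $\scrL(L^2(\bR^n))$ for $\tau>0$; the operator bound \eqref{e:def_sk_bdd_K(t,s)_L2} is the standard analytic semigroup estimate $\|Le^{-\tau L}\|_{\scrL(L^2(\bR^n))} \lesssim \tau^{-1}$ with $\tau=t-s$. For the off-diagonal decay I would write $Le^{-\tau L} = \tau^{-1}(\tau L e^{-\tau L})$ and invoke that, in the range $p_-(L)<q<p_+(L)$, the family $(\tau L e^{-\tau L})_{\tau>0}$ has parabolic $L^q$--$L^2$ off-diagonal estimates (for $q\le 2$) and $L^2$--$L^q$ off-diagonal estimates (for $q\ge 2$) of \emph{every} order $M$, in the sense of \eqref{e:def_Tt_ode}, as in \cite{Auscher2007Memoire}. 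Dividing by $\tau$ turns these into \eqref{e:def_off-diagonal_decay} with exponent $-1-\frac{n}{2}[q,2]$ (resp. $-1-\frac{n}{2}[2,q]$), for arbitrary $M$; this is precisely $L^q$--$L^2$ (resp. $L^2$--$L^q$) decay of type $(0,2,M)$, and intersecting over $M$ gives type $(0,2,\infty)$.

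The representation is the step requiring the most care, since $\Tilde{\cL}_0$ is defined abstractly as the de Simon extension whereas the integral in \eqref{e:def_T-representation} is built through Lemma~\ref{lem:sk_of_si_kappa<0} (the condition $\frac{n}{2}|[q,2]|-\kappa<M$ there being trivially met as $M=\infty$). First, for $f \in L^2_\rmc((0,\infty);D(L))$ the explicit formula \eqref{e:def_mr} coincides with $\Tilde{\cL}_0 f$ and, by the pointwise evaluation in Lemma~\ref{lem:sk_of_si_kappa<0}, with $\int_0^t (K(t,s)f(s))(y)\,ds$ on $\pi(f)^c$. To reach a general $f \in L^2_\rmb(\bR^{1+n}_+)$, I would fix a test function $g \in L^2_\rmb(\bR^{1+n}_+)$ with $\pi(g)\cap\pi(f)=\varnothing$ and approximate $f$ in $L^2(\bR^{1+n}_+)$ by functions $f_n \in L^2_\rmc((0,\infty);D(L))$ whose supports are chosen to keep $\pi(f_n)$ at positive distance from $\pi(g)$. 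Then $\langle \Tilde{\cL}_0 f_n, g\rangle \to \langle \Tilde{\cL}_0 f, g\rangle$ by $L^2$-boundedness; simultaneously the identity of Lemma~\ref{lem:sk_of_si_kappa<0} rewrites $\langle \Tilde{\cL}_0 f_n, g\rangle$ as the pairing of the kernel integral of $f_n$ against $g$, and the uniform Schur bounds of Lemma~\ref{lemma:sk_norm_schur}, together with $f_n \to f$ in $L^2$, let me pass to the limit on that side as well. By arbitrariness of $g$ the two representatives agree a.e. on $\pi(f)^c$, establishing \eqref{e:def_T-representation}. The genuine subtlety is the bookkeeping on supports ensuring that $\pi(f_n)\cap\pi(g)=\varnothing$ is preserved along the approximation; everything else is the dominated convergence already packaged into Lemmas~\ref{lemma:sk_norm_schur} and \ref{lem:sk_of_si_kappa<0}.
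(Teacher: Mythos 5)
Your identification of the kernel, the $L^2$-boundedness via de Simon, and the verification that $(t,s)\mapsto \I_{\{t>s\}}Le^{-(t-s)L}$ lies in $\sk^0_{2,q,\infty}$ for $p_-(L)<q<p_+(L)$ (writing $Le^{-\tau L}=\tau^{-1}(\tau Le^{-\tau L})$ and using the off-diagonal estimates of every order for $(\tau Le^{-\tau L})_{\tau>0}$) all match the paper, which cites \cite[Proposition 3.15]{Auscher2007Memoire} for exactly this point.

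The representation step, however, has a genuine gap, and it is not mere bookkeeping. You propose to approximate $f\in L^2_\rmb(\bR^{1+n}_+)$ in $L^2$ by $f_n\in L^2_\rmc((0,\infty);D(L))$ whose supports keep $\pi(f_n)$ at positive distance from $\pi(g)$. In the essential configuration where $\pi(f)$ and $\pi(g)$ overlap in time and are separated only in space, this forces the $f_n$ to be \emph{spatially} compactly supported $D(L)$-valued functions, and for a general uniformly elliptic $A\in L^\infty$ such approximants need not exist in sufficient supply: $C_\rmc^\infty(\bR^n)\not\subset D(L)$, and truncation destroys membership in $D(L)$ since $\Div(A\nabla(\eta u))$ contains the term $\Div(u\,A\nabla\eta)$, which is only in $H^{-1}$ when $A$ is merely bounded measurable. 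So the approximating sequence on which your whole limiting argument rests cannot be produced. The paper avoids this precisely by never combining ``$D(L)$-valued'' with ``compactly supported'': it first proves $\langle\Tilde{\cL}_0(f),g\rangle=\langle A\cL_{1/2}(f),\nabla g\rangle$ for $f\in L^2((0,\infty);D(L))$ with \emph{no} support restriction (using the sesquilinear form of $L$ and Fubini), extends this identity to all $f\in L^2(\bR^{1+n}_+)$ by continuity of both $\Tilde{\cL}_0$ and the non-singular operator $\cL_{1/2}$ (here $\kappa=1/2>0$, so Lemma \ref{lem:ext_L2_kappa>0} applies and the diagonal causes no trouble), and only then, for $f$ with bounded support disjoint from $\pi(g)$, integrates by parts in $x$ on $\supp g(t)$ to convert $A\nabla e^{-(t-s)L}f(s)$ back into $-Le^{-(t-s)L}f(s)$, where the absolute convergence needed for Fubini comes from the Schur bounds of Lemma \ref{lemma:sk_norm_schur}. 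If you want to stay closer to your scheme, the workable variant is to regularize in the operator rather than in the support, e.g. $f_\epsilon(s):=e^{-\epsilon L}f(s)$, and pass to the limit on the kernel side using uniform-in-$\epsilon$ off-diagonal bounds for $Le^{-(t-s+\epsilon)L}$; but that is a different argument from the one you wrote and still requires setting up the uniform Schur estimates for the shifted kernels.
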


\begin{proof}
    The boundedness of $\tilde{\cL}_0$ on $L^2(\bR^{1+n}_+)$ is clear by construction, and  the function $(t,s) \mapsto \I_{\{t>s\}}(t,s) Le^{-(t-s)L}$ belongs to $\sk^0_{2,q,\infty}$ for any $q \in (p_-(L),p_+(L))$ by \cite[Proposition 3.15]{Auscher2007Memoire}. Thus, Lemma \ref{lem:sk_of_si_kappa<0} yields for any $f \in L^2_\rmb(\bR^{1+n}_+)$,
    \[ (t,x) \mapsto \int_0^t (Le^{-(t-s)L} f)(x) ds \]
    defines a function in $L^2_{\loc}(\overline{\bR^{1+n}_+} \setminus \pi(f))$. It only remains to prove the representation \eqref{e:def_T-representation}, or equivalently, for any $f \in L^2_\rmb(\bR^{1+n}_+)$ and $g \in C_\rmc^\infty({\bR^{1+n}_+})$ with $\pi(g) \cap \pi(f) = \varnothing$,
    \begin{equation}
        \left \langle \Tilde{\cL}_0(f),g \right \rangle_{L^2(\bR^{1+n}_+)} = \int_0^\infty \int_{\bR^n} \left( \int_0^t (L e^{-(t-s)L} f(s))(x) ds \right) \ovg(t,x) dtdx.
        \label{e:L0_representation_pair}
    \end{equation}
    
    To prove \eqref{e:L0_representation_pair}, we proceed by putting the following two observations together. Fix $g \in C_\rmc^\infty({\bR^{1+n}_+})$. 
    
    The first observation is that for any $f \in L^2((0,\infty);D(L))$,
    \begin{equation}
        \left \langle \cL_0(f),g \right \rangle_{L^2(\bR^{1+n}_+)}=\left \langle A\cL_{1/2}(f), \nabla g \right \rangle_{L^2(\bR^{1+n}_+)}.
        \label{e:L0(f),g_D(L)case}
    \end{equation}
    Indeed, since $\cL_{1/2}(f)$ is given by an $L^2(\bR^n)$-valued Bochner integral, all the following integrals converge absolutely, so Fubini's theorem ensures
    \begin{align*}
        \left \langle \cL_0(f),g \right \rangle_{L^2(\bR^{1+n}_+)} 
        &= \int_0^\infty  \int_0^t \left \langle Le^{-(t-s)L}f(s),g(t) \right \rangle_{L^2(\bR^n)} dsdt \\
        &= \int_0^\infty  \int_0^t \left \langle A\nabla e^{-(t-s)L}f(s), \nabla g(t) \right \rangle_{L^2(\bR^n)} dsdt\\
        &=\left \langle A\cL_{1/2}(f), \nabla g \right \rangle_{L^2(\bR^{1+n}_+)}.
    \end{align*}

   The second observation is that when $f$ lies in $L^2_\rmb(\bR^{1+n}_+)$ with $\pi(f)$  disjoint with $\pi(g)$, we have the identity
    \begin{equation}
        \begin{aligned}
            &\left \langle A\cL_{1/2}(f), \nabla g \right \rangle_{L^2(\bR^{1+n}_+)} \\
            &\quad = \int_0^\infty \int_0^t \int_{\bR^n} (A\nabla e^{-(t-s)L} f(s))(x) \cdot \overline{\nabla g}(t,x) dxdsdt \\
            &\quad =- \int_0^\infty \int_0^t \int_{\bR^n} \Div(A\nabla e^{-(t-s)L} f(s))(x) \overline{g}(t,x) dxdsdt =: I
        \end{aligned}
        \label{e:Anabla->div(Anabla)}
    \end{equation}
    Indeed, all the integrals converge absolutely as for fixed $(t,s)$, both $x \mapsto (A\nabla e^{-(t-s)L} f(s))(x)$ and $x \mapsto \Div (A\nabla e^{-(t-s)L} f(s))(x)$ are integrable on the support of $ g(t)$, using Lemma \ref{lem:ext_L2_kappa>0} with $\kappa=1/2$ and Lemma \ref{lem:sk_of_si_kappa<0} with $\kappa=0$. Thus, the second equality follows by integration by parts for the divergence.
    
    We now prove \eqref{e:L0_representation_pair} as follows. By a limiting argument from the first observation, we have that for any $f\in L^2(\bR^{1+n}_+)$, \eqref{e:L0(f),g_D(L)case} becomes
    \begin{equation}
        \left \langle \Tilde{\cL}_0(f),g \right \rangle_{L^2(\bR^{1+n}_+)} = \left \langle A\cL_{1/2}(f), \nabla g \right \rangle_{L^2(\bR^{1+n}_+)}
        \label{e:L0->AL1/2_L2}
    \end{equation}
    Indeed, the argument is similar to Lemma \ref{lemma:L1/2=nabla_L1}, but using the continuity of $\cL_{1/2}$ and $\tilde{\cL}_0$. We hence leave details to the reader.

    Next, for any $f \in L^2_\rmb(\bR^{1+n}_+)$ with $\pi(f)$ disjoint with $\pi(g)$, \eqref{e:L0->AL1/2_L2} and \eqref{e:Anabla->div(Anabla)} yield
    \[ \left \langle \Tilde{\cL}_0(f),g \right \rangle_{L^2(\bR^{1+n}_+)} = I. \]
    So it only remains to observe that for a.e. $s<t$ and $x \in \bR^n$,  
    \[ \Div(A\nabla e^{-(t-s)L} f(s))(x)= (Le^{-(t-s)L} f(s))(x) \]
    by definition of $L$, as $f(s)$ belongs to $L^2(\bR^n)$.
\end{proof}

We then consider a further extension of $\tilde{\cL}_0$ to tent spaces.
\begin{prop}
    \label{prop:ext_L0_tent}
    Let $\beta>-1/2$ and $p_L(\beta) < p \leq \infty$ . Then, $\tilde{\cL}_0$ can be extended to a bounded operator on $T^{p}_\beta$.
\end{prop}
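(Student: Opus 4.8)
The plan is to recognise $\tilde{\cL}_0$ as a singular integral operator of type $+(0,2,q,\infty)$ and then feed it into the tent space machinery of Section \ref{ssec:results_sio_tent}, the only non-formal ingredient being the $L^2_\beta$-boundedness that must be imported from outside.

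Lemma \ref{lemma:L0_sio} already gives $\tilde{\cL}_0 \in \sio^{0+}_{2,q,\infty}$ for every $q \in (p_-(L),p_+(L))$. Here $\kappa = 0$, so $\beta + \kappa = \beta$, and both Proposition \ref{prop:sio_tent_p<2} and Corollary \ref{cor:sio_tent_p>2} demand as a hypothesis that $\tilde{\cL}_0$ be bounded from $L^2_\beta(\bR^{1+n}_+)$ to itself. As emphasised in Remark \ref{rem:sio_tent_p<2_L2beta_ext_necessity}, this does not follow from the kernel bounds, so I would supply it by invoking \cite[Theorem 1.3]{Auscher-Axelsson2011_MR_L2beta}, which proves precisely this weighted $L^2_\beta$-bound for the maximal regularity operator in the full range $\beta > -1/2$. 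Checking that the hypotheses of that theorem match our (time-independent, uniformly elliptic) setting and weight exponent is where essentially all the mathematical content sits; the remainder is parameter bookkeeping.

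With the $L^2_\beta$-estimate granted, the argument splits at $p = 2$. For $p_L(\beta) < p \leq 2$, fix such a $p$ and recall $p_q(\beta) = \frac{nq}{n+(2\beta+1)q}$ (the case $m = 2$ of \eqref{e:def_p_M,p_q}), which is continuous and strictly increasing in $q$ with $p_{p_-(L)}(\beta) = p_L(\beta)$. Since $q \mapsto p_q(\beta) \to p_L(\beta)$ as $q \downarrow p_-(L)$, one may choose $q \in (p_-(L),2)$ with $p_L(\beta) < p_q(\beta) < p$. Because $M = \infty$ we have $p_M = \frac{2n}{n+2mM} = 0$, and the extra decay requirement $M > \frac{n}{mq}$ of Proposition \ref{prop:sio_tent_p<2}\eqref{case:small_q'_sio_tent_p<2} holds trivially; hence, whichever of the two cases \eqref{case:large_q'_sio_tent_p<2} or \eqref{case:small_q'_sio_tent_p<2} is in force (according to whether $p_q(\beta) < 1$ or $p_q(\beta) \geq 1$), Proposition \ref{prop:sio_tent_p<2} delivers a bounded extension of $\tilde{\cL}_0$ on $T^p_\beta$. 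For $2 \leq p \leq \infty$, fix any $q \in (2,p_+(L))$ and apply Corollary \ref{cor:sio_tent_p>2} in the range $2 \leq p \leq \infty$, which gives a bounded extension from $T^p_\beta$ to $T^p_\beta$ over the whole range at once.

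Finally, I would check consistency: each extension so produced agrees with $\tilde{\cL}_0$ on $L^2_\rmc(\bR^{1+n}_+)$, which is dense in $T^p_\beta$ for $p < \infty$ and $\weakstar$-dense for $p = \infty$ (Lemma \ref{lemma:L2c-Tinfty_weak*dense}), so for each $p \in (p_L(\beta),\infty]$ we obtain a well-defined bounded extension on $T^p_\beta$, and these coincide on overlaps. The sharp lower endpoint $p_L(\beta)$ is attained in the limit $q \downarrow p_-(L)$, consistently with the strict inequality $p > p_L(\beta)$. I expect the main obstacle to be not the tent space estimates themselves but the correct transfer of the $L^2_\beta$-boundedness from \cite{Auscher-Axelsson2011_MR_L2beta}, given the gap in \cite{auscher12maxreg} noted earlier.
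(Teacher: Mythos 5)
Your proposal is correct and follows exactly the paper's own route: classify $\tilde{\cL}_0$ via Lemma \ref{lemma:L0_sio} as an element of $\sio^{0+}_{2,q,\infty}$ for $q\in(p_-(L),p_+(L))$, import the $L^2_\beta$-boundedness from \cite[Theorem 1.3]{Auscher-Axelsson2011_MR_L2beta}, and conclude with Proposition \ref{prop:sio_tent_p<2} and Corollary \ref{cor:sio_tent_p>2}. The extra bookkeeping you supply (choosing $q$ close to $p_-(L)$ so that $p_q(\beta)<p$, and noting $p_M=0$ and the vacuity of the decay condition when $M=\infty$) is exactly the implicit content of the paper's one-line conclusion.
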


\begin{proof}
    Lemma \ref{lemma:L0_sio} says $\tilde{\cL}_0$ lies in $\sio^{0+}_{2,q,\infty}$ when $p_-(L)<q<p_+(L)$, and \cite[Theorem 1.3]{Auscher-Axelsson2011_MR_L2beta} shows $\tilde{\cL}_0$ is bounded on $L^2_\beta(\bR^{1+n}_+)$ for any $\beta>-1/2$. We hence conclude by invoking Proposition \ref{prop:sio_tent_p<2} and Corollary \ref{cor:sio_tent_p>2}.
\end{proof}

The extension of $\tilde{\cL}_0$ to $T^p_\beta$ is still denoted by $\tilde{\cL}_0$ in the sequel. Write $\cL_1$ and $\cL_{1/2}$ for their extensions to $T^p_\beta$. Let us prove Theorem \ref{thm:ic_sol} \eqref{item:ic_max_reg}.

\begin{proof}[Proof of Theorem \ref{thm:ic_sol} \eqref{item:ic_max_reg}]
    Let $f$ be in $T^p_\beta$ and $u=\cL_1(f)$. It suffices to identify $-\Div(A \nabla u)$ with $\tilde\cL_0(f)$ in $\scrD'(\bR^{1+n}_+)$. Indeed, if it holds, then Proposition \ref{prop:ext_L0_tent} yields
    \[ \|\Div(A \nabla u)\|_{T^p_\beta} = \|\Tilde{\cL}_0(f)\|_{T^p_\beta} \lesssim \|f\|_{T^p_\beta}. \]
    Moreover, \eqref{item:ic_sol} implies $\partial_t u = \Div(A \nabla u) + f$ in $\scrD'(\bR^{1+n}_+)$. Thus, it follows directly that $\partial_t u$ lies in $T^p_\beta$ with
    \[ \|\partial_t u\|_{T^p_\beta} \lesssim \|\Div(A\nabla u)\|_{T^p_\beta} + \|f\|_{T^p_\beta} \lesssim \|f\|_{T^p_\beta}. \]
    
    To prove the identification, we first observe that
    \begin{equation}
        \tilde{\cL}_0(f) = -\Div(A\cL_{1/2}(f)) \quad \text{ in } \scrD'(\bR^{1+n}_+).
        \label{e:id_tilL0=div(AL1/2)_tent}
    \end{equation}
    In fact, it follows from \eqref{e:L0->AL1/2_L2} with a limiting argument applied to a sequence in $T^p_\beta \cap L^2(\bR^{1+n}_+)$ approximating $f$ ( ${\weakstar}$ if $p=\infty$, see Case 2 of Lemma \ref{lemma:ext_L1-1/2_tent}). The second observation is that
    \begin{equation}
        -\Div(A\cL_{1/2}(f)) = -\Div(A\nabla \cL_1(f)) \quad \text{ in } \scrD'(\bR^{1+n}_+).
        \label{e:id_divAL1/2=Lu_tent}
    \end{equation}
    Indeed, Lemma \ref{lemma:ext_L1-1/2_tent} says $\cL_{1/2}(f)=\nabla \cL_1(f)$ in $T^p_{\beta+\frac{1}{2}}$, hence in $L^2_{\loc}(\bR^{1+n}_+)$, so \eqref{e:id_divAL1/2=Lu_tent} follows by multiplying by $A$ and applying the divergence. Combining \eqref{e:id_tilL0=div(AL1/2)_tent} and \eqref{e:id_divAL1/2=Lu_tent} yields $\tilde{\cL}_0(f)=-\Div(A\nabla u)$ as desired.
\end{proof}

%%%%% Boundary behavior
\subsection{Boundary behavior}
\label{ssec:boundary_ic}
In this section, we prove Theorem \ref{thm:ic_sol} \eqref{item:ic_boundary_whitney} and \eqref{item:ic_boundary_LpEp}. The first result is valid for arbitrary function in $T^p_{\beta+1}$.

\begin{lemma}
    \label{lemma:whitney-boundary_tent}
    Let $0<p \leq \infty$ and $\beta>-1/2$. Let $u$ be in $T^{p}_{\beta+1}$. Then, for a.e. $x \in \bR^n$,
    \[ \lim_{t \rightarrow 0} \left( \fint_{W(t,x)} |u(s,y)|^2 dsdy \right)^{1/2} = 0. \]
\end{lemma}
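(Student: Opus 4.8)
The plan is to bound the Whitney average pointwise by a positive power of $t$ times a quantity that is finite (for a.e.\ $x$ when $p<\infty$, and globally when $p=\infty$), so that the hypothesis $\beta>-1/2$ forces the limit through the gain $t^{2\beta+1}$. Write $\Phi(t,x):=(\fint_{W(t,x)}|u(s,y)|^2\,ds\,dy)^{1/2}$ and record $|W(t,x)|=c_n t^{1+n/2}$ with $c_n:=|B(0,1)|$. The only geometric input is that for $s\in(t,2t)$ one has $s^{1/2}\ge t^{1/2}$, hence $B(x,t^{1/2})\subseteq B(x,s^{1/2})$, while $s\eqsim t$ on this range.

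First I would treat $0<p<\infty$. Enlarging the spatial ball from $B(x,t^{1/2})$ to $B(x,s^{1/2})$ and factoring out the tent weight gives
\[
    \Phi(t,x)^2 \le \frac{1}{c_n t^{1+n/2}}\int_t^{2t} s^{2\beta+2+\frac{n}{2}}\left( s^{-2\beta-2-\frac{n}{2}}\int_{B(x,s^{1/2})}|u(s,y)|^2\,dy\right)ds.
\]
Since $2\beta+2+\frac{n}{2}>0$ and $s\le 2t$, I would bound $s^{2\beta+2+n/2}\le(2t)^{2\beta+2+n/2}$ and recognize the bracketed integral, integrated in $s$, as (a piece of) $\cA_{\beta+1;2}(u)(x)^2$, whence $\Phi(t,x)^2\lesssim t^{2\beta+1}\,\cA_{\beta+1;2}(u)(x)^2$. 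As $u\in T^p_{\beta+1}$ means $\cA_{\beta+1;2}(u)\in L^p(\bR^n)$, this square function is finite for a.e.\ $x$; combined with $2\beta+1>0$ the right-hand side tends to $0$ as $t\to0$ at every such $x$. (Equivalently one may keep the genuine tail $\int_t^{2t}$, which vanishes as $t\to0$ by absolute continuity of the integral.)

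For $p=\infty$, where membership in $T^\infty_{\beta+1}$ is encoded by the Carleson functional $\sup_B(\int_0^{r(B)^2}\fint_B|s^{-\beta-1}u|^2\,dy\,ds)^{1/2}$ rather than a conical square function, I would instead test against the ball $B:=B(x,(2t)^{1/2})$, for which $r(B)^2=2t$ and $W(t,x)\subset(0,r(B)^2)\times B$. Writing $|u|^2=s^{2\beta+2}|s^{-\beta-1}u|^2$ and using $2\beta+2>0$ together with $s\le r(B)^2=2t$ to bound $s^{2\beta+2}\le(2t)^{2\beta+2}$, the Carleson control yields $\Phi(t,x)^2\lesssim t^{2\beta+1}\|u\|_{T^\infty_{\beta+1}}^2$, uniformly in $x$, which again tends to $0$.

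The computations are routine once the comparison is set up; the only step needing genuine care is the $p=\infty$ endpoint, since there the conical square function need not be finite pointwise and the estimate must be run through the Carleson functional. In all cases the decisive mechanism is the same: the comparison produces the factor $t^{2\beta+1}$, and its positivity is precisely the hypothesis $\beta>-1/2$.
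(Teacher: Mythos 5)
Your proposal is correct and follows essentially the same route as the paper: for $p<\infty$ the Whitney average is dominated by $t^{2\beta+1}\,\cA_{\beta+1;2}(u)(x)^{2}$, with a.e.\ finiteness of the conical square function and $2\beta+1>0$ finishing the argument, while for $p=\infty$ one runs the same comparison through the Carleson functional over $B(x,(2t)^{1/2})$ to get a bound uniform in $x$. The exponent bookkeeping ($(2\beta+2+n/2)-(1+n/2)=2\beta+1$) and the ball enlargement $B(x,t^{1/2})\subseteq B(x,s^{1/2})$ match the paper's proof exactly.
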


\begin{proof}
    First consider $0<p<\infty$. For any $t>0$ and $x \in \bR^n$, we have
    \[ \fint_{W(t,x)} |u(s,y)|^2 dsdy \lesssim \fint_t^{2t} ds \fint_{B(x,s^{1/2})} |u(s,y)|^2 dy \lesssim t^{2\beta+1} \cA_{\beta+1;2}(u)(x)^{2}, \]
    where $\cA_{\beta+1;2}$ is defined in \eqref{e:def_A_concial_square}. Since $\|\cA_{\beta+1;2}(u)\|_p \eqsim \|u\|_{T^{p}_{\beta+1}} < \infty$, $\cA_{\beta+1}(u)(x)$ is finite for a.e. $x \in \bR^n$, so the right-hand side tends to 0 when $t \to 0$ as $\beta>-1/2$.
    
    Then for $p=\infty$, note that $W(t,x) \subset (0,2t) \times B(x,(2t)^{1/2})$, so
    \[ \fint_{W(t,x)} |u(s,y)|^2 dsdy \lesssim t^{2\beta+1} \int_t^{2t} ds \fint_{B(x,(2t)^{1/2})} |s^{-(\beta+1)} u(s,y)|^2 dy. \]
    Taking esssup on $x \in \bR^n$, we get
    \[ \esssup{x \in \bR^n} \fint_{W(t,x)} |u(s,y)|^2 dsdy \lesssim t^{2\beta+1} \|u\|_{T^{\infty}_{\beta+1}}^{2}. \]
    This completes the proof.
\end{proof}

\begin{lemma}
    \label{lemma:tent-embed-L2loc_bar}
    For any $p \in [2,\infty]$ and $\beta \geq -1$, $T^{p}_{\beta+1}$ is contained in $L^2_{\loc}(\overline{\bR^{1+n}_+})$. More precisely, for ball $B \subset \bR^n$ and $0<T<r(B)^2$,
    \begin{equation}
        \label{e:tent-L2(0,T)*B}
        \left ( \int_0^T \int_B |u(t,y)|^2 dtdy \right )^{1/2} \lesssim_p T^{\beta+1} |B|^{[2,p]} \|u\|_{T^{p}_{\beta+1}}.
    \end{equation}
\end{lemma}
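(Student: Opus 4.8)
The plan is to reduce the local $L^2$ integral to an integral of the conical square function $\cA_{\beta+1;2}(u)$ over an enlarged ball, and then exploit $p\ge 2$ through Hölder's inequality on a set of finite measure. The two hypotheses $T<r(B)^2$ and $\beta\ge -1$ will enter at exactly two identifiable points, and there is no deep obstacle beyond bookkeeping of constants.

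First I would treat $2\le p<\infty$, where $\|u\|_{T^p_{\beta+1}}\eqsim \|\cA_{\beta+1;2}(u)\|_p$. Fix $B$ and $0<T<r(B)^2$. The geometric observation driving the proof is that if $y\in B$ and $0<t<T$, then $t^{1/2}<r(B)$, so the full ball of admissible cone vertices $B(y,t^{1/2})$ is contained in $2B$. Writing the $L^2$-norm of $\cA_{\beta+1;2}(u)$ over $2B$ and exchanging the $x$-integral with the $(t,y)$-integral by Tonelli (the integrand is nonnegative), the relevant set of vertices is $\{x\in 2B:|x-y|<t^{1/2}\}=B(y,t^{1/2})$, whose measure $\sim t^{n/2}$ cancels the weight $t^{-n/2}$ built into $\cA_{\beta+1;2}$. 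Restricting the outer integral to $(0,T)\times B$ yields the lower bound
\[
\int_{2B}\cA_{\beta+1;2}(u)(x)^2\,dx \gtrsim \int_0^T\int_B |t^{-(\beta+1)}u(t,y)|^2\,dt\,dy.
\]

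Next I would remove the weight. Since $\beta\ge -1$, the factor $t^{2(\beta+1)}$ is bounded by $T^{2(\beta+1)}$ on $(0,T)$, whence
\[
\int_0^T\int_B |u|^2 \le T^{2(\beta+1)}\int_0^T\int_B |t^{-(\beta+1)}u|^2 \lesssim T^{2(\beta+1)}\int_{2B}\cA_{\beta+1;2}(u)^2.
\]
Because $p/2\ge 1$, Hölder's inequality on the finite-measure set $2B$ gives $\int_{2B}\cA_{\beta+1;2}(u)^2 \le |2B|^{1-2/p}\,\|\cA_{\beta+1;2}(u)\|_p^2 \eqsim |B|^{2[2,p]}\,\|u\|_{T^p_{\beta+1}}^2$, using $1-2/p=2[2,p]$ and $|2B|\eqsim|B|$. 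Taking square roots produces the claimed estimate. The endpoint $p=\infty$ I would handle directly from the Carleson-type definition of $T^\infty_{\beta+1}$: taking the ball $B$ itself and using $T<r(B)^2$ gives $\int_0^T\int_B |t^{-(\beta+1)}u|^2 \le |B|\,\|u\|_{T^\infty_{\beta+1}}^2$, and the same monotonicity $t^{2(\beta+1)}\le T^{2(\beta+1)}$ closes the case, consistently with $[2,\infty]=1/2$.

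I do not anticipate a genuine difficulty; the argument is a Tonelli/Hölder computation. The only points requiring care are precisely the two hypotheses: $T<r(B)^2$ is what guarantees the containment $B(y,t^{1/2})\subset 2B$ needed for the vertex-measure to be the full $c_n t^{n/2}$ in the Tonelli step, and $\beta\ge -1$ is what keeps $t^{2(\beta+1)}$ bounded as $t\to 0$, which is exactly what upgrades the conclusion from $L^2_{\loc}(\bR^{1+n}_+)$ to $L^2_{\loc}(\overline{\bR^{1+n}_+})$; the qualitative containment then follows since any compact subset of $\overline{\bR^{1+n}_+}$ sits inside some $(0,T)\times B$ with $T<r(B)^2$.
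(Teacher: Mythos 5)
Your proof is correct. It follows the same overall Tonelli-plus-H\"older strategy as the paper, but with a simplification worth noting: the paper encloses $(0,T)\times B$ in a tent $T_\alpha(2B)$ of large aperture $\alpha=T^{-1/2}r(B)>1$ and must then invoke the change-of-aperture estimate \eqref{e:tent_change_ap}, where the bound $\|\cA^{(\alpha)}_{\beta+1;2}(u)\|_p\lesssim \alpha^{n/2}\|u\|_{T^p_{\beta+1}}$ (valid for $p\ge 2$) exactly cancels the factor $\alpha^{-n}$ produced by the averaging; you instead observe that for $y\in B$ and $t<T<r(B)^2$ the aperture-one ball $B(y,t^{1/2})$ is already contained in $2B$, so the vertex set $\{x\in 2B:|x-y|<t^{1/2}\}$ has full measure $c_nt^{n/2}$ and the lower bound $\int_{2B}\cA_{\beta+1;2}(u)^2\gtrsim \int_0^T\int_B|t^{-(\beta+1)}u|^2$ falls out of Tonelli directly, with no change of aperture needed. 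The remaining steps --- bounding $t^{2(\beta+1)}$ by $T^{2(\beta+1)}$ using $\beta\ge-1$, H\"older with exponent $p/2$ on $2B$ yielding $|B|^{2[2,p]}$, and the direct Carleson computation at $p=\infty$ --- coincide with the paper's. Your route is arguably cleaner, as it uses the hypothesis $T<r(B)^2$ only through the elementary inclusion $B(y,t^{1/2})\subset 2B$ and dispenses with the change-of-aperture lemma altogether.
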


\begin{proof}
    By definition, it is trivial if $p=\infty$. We thus assume $2 \leq p < \infty$. For any $\alpha>0$ and ball $B \subset \bR^n$, define the \textit{tent} on $B$ of \textit{aperture} $\alpha$ (and \textit{homogeneity} $m=2$) as
    \[ T_\alpha(B):=\{(t,y) \in \bR^{1+n}_+: 0<\alpha t^{1/2} \leq \dist(y,B^c)\}. \]
    Observe that $(0,T) \times B \subset T_\alpha(2B)$ with $\alpha:=T^{-1/2} r(B)>1$. Moreover, for any $(t,y) \in T_\alpha(2B)$, since $\alpha t^{1/2} < r(B) < \dist(y,(2B)^c)$, we know that $B(y,\alpha t^{1/2}) \subset 2B$. In other words,
    \[ \int_{2B} \I_{B(0,1)}\left ( \frac{x-y}{\alpha t^{1/2}} \right ) dx \eqsim (\alpha t^{1/2})^n. \]
    With the help of these two observations, we get
    \begin{align*}
        \|u\|_{L^2((0,T) \times B)}^2
        &\lesssim \alpha^{-n} \int_{T_\alpha(2B)} |u(t,y)|^2 dt\frac{dy}{t^{n/2}} \int_{2B} \I_{B(0,1)}\left ( \frac{x-y}{\alpha t^{1/2}} \right ) dx \\
        &\leq \alpha^{-n} \int_{2B} dx \int_0^\infty \int_{B(x,\alpha t^{1/2})} |u(t,y)|^2 dt\frac{dy}{t^{n/2}} \\
        &\le T^{2(\beta+1)} \alpha^{-n} \int_{2B} \cA^{(\alpha)}_{\beta+1;2}(u)(x)^2 dx \\
        &\lesssim_p T^{2(\beta+1)} \alpha^{-n} |B|^{2[2,p]} \|\cA^{(\alpha)}_{\beta+1;2}(u)\|_p^2 \lesssim T^{2(\beta+1)} |B|^{2[2,p]} \|u\|_{{T^{p}_{\beta+1}}}^2.
    \end{align*}
    The last inequality follows from the change of aperture, cf. \eqref{e:tent_change_ap}.
\end{proof}

\begin{cor}
    \label{cor:Cesaro_L2(B)_tent}
    Let $2 \leq p \leq \infty$ and $\beta \geq -1$. Let $B \subset \bR^n$ be a ball and $0<T<r(B)^2$. For any $u \in T^{p}_{\beta+1}$,
    \begin{equation}
        \fint_0^T \|u(t)\|_{L^2(B)} dt \lesssim_\beta T^{\beta+\frac{1}{2}} |B|^{[2,p]} \|u\|_{T^{p}_{\beta+1}}.
        \label{e:Cesaro_L2(B)_tent}
    \end{equation}
    In particular, if $\beta>-1/2$, $u(t)$ tends to 0 in a Ces\`aro mean in $L^2_{\loc}(\bR^n)$ as $t \to 0$.
\end{cor}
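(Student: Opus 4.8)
The plan is to deduce \eqref{e:Cesaro_L2(B)_tent} directly from Lemma \ref{lemma:tent-embed-L2loc_bar} by an application of the Cauchy--Schwarz inequality in the time variable. Writing $\fint_0^T \|u(t)\|_{L^2(B)}\,dt = \frac{1}{T}\int_0^T \|u(t)\|_{L^2(B)}\,dt$ and applying Cauchy--Schwarz to the $dt$-integral on $(0,T)$, I would bound
\[ \fint_0^T \|u(t)\|_{L^2(B)}\,dt \le \frac{1}{T}\,T^{1/2}\left(\int_0^T \|u(t)\|_{L^2(B)}^2\,dt\right)^{1/2} = T^{-1/2}\,\|u\|_{L^2((0,T)\times B)}, \]
where the last identity is Fubini's theorem. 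Since the hypotheses $0<T<r(B)^2$, $2\le p\le\infty$, and $\beta\ge-1$ are exactly those of Lemma \ref{lemma:tent-embed-L2loc_bar}, that lemma supplies $\|u\|_{L^2((0,T)\times B)}\lesssim_p T^{\beta+1}|B|^{[2,p]}\|u\|_{T^{p}_{\beta+1}}$, and multiplying through by $T^{-1/2}$ yields \eqref{e:Cesaro_L2(B)_tent}. The only bookkeeping is the exponent arithmetic: the averaging normalisation contributes $T^{-1}$, Cauchy--Schwarz restores $T^{1/2}$, and the embedding supplies $T^{\beta+1}$, for a total power $-1+\tfrac12+(\beta+1)=\beta+\tfrac12$.

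For the ``in particular'' claim, I would observe that when $\beta>-1/2$ the power $T^{\beta+\frac{1}{2}}$ tends to $0$ as $T\to0$, so for each fixed ball $B$ the left-hand side of \eqref{e:Cesaro_L2(B)_tent} vanishes in the limit. Minkowski's integral inequality gives
\[ \left\|\fint_0^T u(t)\,dt\right\|_{L^2(B)} \le \fint_0^T \|u(t)\|_{L^2(B)}\,dt, \]
so the Ces\`aro average $\fint_0^T u(t)\,dt$ converges to $0$ in $L^2(B)$ as $T\to0$; since $B$ is an arbitrary ball, this is convergence to $0$ in $L^2_{\loc}(\bR^n)$.

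I do not expect any genuine obstacle here: the entire analytic content sits in Lemma \ref{lemma:tent-embed-L2loc_bar} (itself proved via the change of aperture \eqref{e:tent_change_ap}), and the corollary is a soft consequence obtained by trading one factor of $T^{1/2}$ through Cauchy--Schwarz against the averaging factor $T^{-1}$. The only mild point to record is that passing from the Ces\`aro mean of the norms to the norm of the Ces\`aro mean needs the triangle (Minkowski) inequality, which is why the displayed control is in fact slightly stronger than convergence of the average alone.
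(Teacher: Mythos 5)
Your argument is correct and is essentially identical to the paper's proof: the paper also deduces \eqref{e:Cesaro_L2(B)_tent} from Lemma \ref{lemma:tent-embed-L2loc_bar} via Jensen's (i.e.\ Cauchy--Schwarz) inequality in the time variable, with the same exponent bookkeeping $-1+\tfrac12+(\beta+1)=\beta+\tfrac12$. The Minkowski step for the ``in particular'' claim is the intended (if unstated) reading of convergence in Ces\`aro mean, so nothing is missing.
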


\begin{proof}
    It suffices to prove \eqref{e:Cesaro_L2(B)_tent}, which is a direct consequence of Lemma \ref{lemma:tent-embed-L2loc_bar} by Jensen's inequality as
    \begin{align*}
        \fint_0^T \|u(t)\|_{L^2(B)} dt 
        \leq \left ( \fint_0^T \| u(t)\|_{L^2(B)}^2 dt \right )^{1/2} 
        \lesssim T^{\beta+\frac{1}{2}} |B|^{[2,p]} \|u\|_{T^{p}_{\beta+1}}.
    \end{align*}
    This completes the proof.
\end{proof}

\begin{cor}
    \label{cor:Einfty_est_sol}
    Let $2 \leq p \leq \infty$ and $\beta \geq -1$. Let $u$ be in $T^{p}_{\beta+1}$ with $\partial_t u \in T^{p}_\beta$. Then, for any $\delta>0$ and $t \in (0,\delta)$,
    \begin{equation}
        \|u(t)\|_{E^\infty_\delta} \lesssim \delta^{-\frac{n}{2p}} t^{\beta+\frac{1}{2}} \big ( \|u\|_{T^p_{\beta+1}} + \|\partial_t u\|_{T^p_\beta} \big ).
        \label{e:Einfty_est_sol}
    \end{equation}
\end{cor}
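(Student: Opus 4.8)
The plan is to control the fixed-time slice $u(t)$ by comparing it to its time-average over $(0,t)$, the discrepancy being governed by $\partial_t u$, and then to convert an $L^2(B)$-bound into an $E^\infty_\delta$-bound. Fix $x\in\bR^n$, set $B:=B(x,\delta^{1/2})$, so $r(B)^2=\delta$ and every $t\in(0,\delta)$ satisfies $t<r(B)^2$. Since the $E^\infty_\delta$-norm of $u(t)$ equals the $\esssup{x}$ of $|B|^{-1/2}\|u(t)\|_{L^2(B)}$ and $|B|\eqsim\delta^{n/2}$, it suffices to prove, uniformly in $x$, that $\|u(t)\|_{L^2(B)}\lesssim t^{\beta+\frac12}|B|^{[2,p]}\big(\|u\|_{T^p_{\beta+1}}+\|\partial_t u\|_{T^p_\beta}\big)$; indeed $|B|^{[2,p]-\frac12}=|B|^{-1/p}\eqsim\delta^{-n/(2p)}$, which reproduces the claimed factor (and degenerates harmlessly to $\delta^0$ when $p=\infty$).

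The first step is the representation
\[ u(t)=\fint_0^t u(\tau)\,d\tau+\frac1t\int_0^t\sigma\,\partial_t u(\sigma)\,d\sigma \quad\text{in } L^2(B), \]
valid for a.e.\ $t\in(0,\delta)$. I would derive it from the fundamental theorem of calculus: Lemma \ref{lemma:tent-embed-L2loc_bar} gives $u\in L^2((0,T)\times B)\subset L^1((0,T);L^2(B))$ for $T<\delta$, while the weighted bound established below shows $\int_0^t\sigma\|\partial_t u(\sigma)\|_{L^2(B)}\,d\sigma<\infty$, hence $\partial_t u(\cdot)\in L^1_{\loc}((0,T);L^2(B))$. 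Thus $u$ has an absolutely continuous $L^2(B)$-valued representative on each $(a,T)$, so $u(t)=u(\tau)+\int_\tau^t\partial_t u(\sigma)\,d\sigma$; averaging over $\tau\in(\varepsilon,t)$, using Fubini to rewrite the double integral as $\int_\varepsilon^t(\sigma-\varepsilon)\partial_t u(\sigma)\,d\sigma$, and letting $\varepsilon\to0$ yields the identity.

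For the two terms I would argue as follows. The first is handled by Corollary \ref{cor:Cesaro_L2(B)_tent} with $T=t$: since $\|\fint_0^t u(\tau)\,d\tau\|_{L^2(B)}\le\fint_0^t\|u(\tau)\|_{L^2(B)}\,d\tau$, we get the bound $\lesssim t^{\beta+\frac12}|B|^{[2,p]}\|u\|_{T^p_{\beta+1}}$. For the second, a weighted Cauchy--Schwarz in $\sigma$ gives
\[ \frac1t\int_0^t\sigma\|\partial_t u(\sigma)\|_{L^2(B)}\,d\sigma\lesssim_\beta t^{\beta+\frac12}\Big(\int_0^t\sigma^{-2\beta}\|\partial_t u(\sigma)\|_{L^2(B)}^2\,d\sigma\Big)^{1/2}=t^{\beta+\frac12}\|\partial_t u\|_{L^2_\beta((0,t)\times B)}, \]
using $\int_0^t\sigma^{2+2\beta}\,d\sigma\eqsim t^{3+2\beta}$, finite because $\beta\ge-1>-3/2$. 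It then remains to establish the weighted local embedding $\|w\|_{L^2_\beta((0,t)\times B)}\lesssim|B|^{[2,p]}\|w\|_{T^p_\beta}$ for $w\in T^p_\beta$, with constant independent of $t<\delta$. This is proved exactly as Lemma \ref{lemma:tent-embed-L2loc_bar}, except that no power of $t$ need be extracted (the weight $\sigma^{-2\beta}$ already matches $\cA^{(\alpha)}_{\beta;2}$, which is why the constraint $\beta\ge0$ does not appear and all $\beta\ge-1$ are covered): choosing aperture $\alpha=(\delta/t)^{1/2}>1$ gives $(0,t)\times B\subset T_\alpha(2B)$, the cone comparison together with $B(y,\alpha s^{1/2})\subset2B$ yields $\|w\|_{L^2_\beta((0,t)\times B)}^2\lesssim\alpha^{-n}\int_{2B}\cA^{(\alpha)}_{\beta;2}(w)^2$, and then Hölder's inequality (valid as $p\ge2$) with the change of aperture \eqref{e:tent_change_ap} contributes $\alpha^{n/2}$, which cancels $\alpha^{-n/2}$. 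Applying this with $w=\partial_t u$ closes the estimate and combines with the first term to give the required bound.

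The main obstacle is the rigorous justification of the representation identity down to $t=0$: one must confirm that $u$ admits an absolutely continuous $L^2(B)$-valued representative and that both $\int_0^t\|u(\tau)\|_{L^2(B)}\,d\tau$ and $\int_0^t\sigma\|\partial_t u(\sigma)\|_{L^2(B)}\,d\sigma$ converge, which is exactly where the two embeddings enter—the unweighted one of Lemma \ref{lemma:tent-embed-L2loc_bar} for $u$ and the new weighted one for $\partial_t u$. Once these finiteness and absolute-continuity facts are secured, the remaining computations are routine.
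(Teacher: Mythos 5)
Your proof is correct, and it is built on the same core mechanism as the paper's: write $u(t)$ as an average of $u(\tau)$ over earlier times plus an averaged integral of $\partial_t u$, bound the first piece by Corollary \ref{cor:Cesaro_L2(B)_tent}, and normalize by $|B|^{-1/2}$ over balls of radius $\delta^{1/2}$. The difference is in the choice of averaging window. The paper averages $T'$ over the dyadic interval $[T/4,T/2]$, which stays away from $t=0$; after Fubini the $\partial_t u$ contribution becomes $\fint_{T/4}^{T}s\|\partial_t u(s)\|_{L^2(B)}\,ds$, and since $s\partial_t u\in T^p_{\beta+1}$ whenever $\partial_t u\in T^p_\beta$ (the weight in the tent norm absorbs the extra factor of $s$), Corollary \ref{cor:Cesaro_L2(B)_tent} applies verbatim to both terms with no further work. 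You instead average over $(0,t)$, which forces a limiting argument $\varepsilon\to 0$ and requires the integrability of $\sigma\|\partial_t u(\sigma)\|_{L^2(B)}$ down to $\sigma=0$; you supply this via a weighted Cauchy--Schwarz (valid since $\beta\ge -1>-3/2$) together with a weighted variant of Lemma \ref{lemma:tent-embed-L2loc_bar}, whose proof by the cone-counting argument with aperture $\alpha=(\delta/t)^{1/2}$ is correct for $p\ge 2$. Both routes give the same constant; the paper's is shorter because the same observation $s\partial_t u\in T^p_{\beta+1}$ would also have let you dispense with your auxiliary weighted embedding entirely, as $\frac1t\int_0^t\sigma\|\partial_t u(\sigma)\|_{L^2(B)}\,d\sigma=\fint_0^t\|\sigma\partial_t u(\sigma)\|_{L^2(B)}\,d\sigma$ is directly controlled by Corollary \ref{cor:Cesaro_L2(B)_tent}.
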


\begin{proof}
    Fix a ball $B \subset \bR^n$. For any $T>0$, by \eqref{e:tent_truncate_Lq}, both of $u$ and $\partial_t u$ belong to $L^2((T/8,2T) \times B)$. Thus, for any $T' \in [T/4,T/2]$, we have
    \[ u(T) = u(T') + \int_{T'}^T \partial_t u(t) dt \]
    valued in $L^2(B)$. Taking average with respect to $T'$, we get
    \[ u(T) = \fint_{T/4}^{T/2} u(T') dT' + \fint_{T/4}^{T/2} dT' \int_{T'}^T \partial_t u(t) dt. \]
    Since $T/4 \leq T' \leq \min\{t,T/2\}$, Fubini's theorem yields
    \[ \|u(T)\|_{L^2(B)} \lesssim \fint_{T/4}^{T/2} \|u(T')\|_{L^2(B)} dT' + \fint_{T/4}^T t\|\partial_t u(t)\|_{L^2(B)} dt. \]
    Note that both of $u$ and $t\partial_t u$ lie in $T^{p}_{\beta+1}$. Using Corollary \ref{cor:Cesaro_L2(B)_tent},  we deduce
    \[ \|u(t)\|_{E^\infty_\delta} \eqsim \delta^{-\frac{n}{2}} \sup_B \|u(t)\|_{L^2(B)} \lesssim \delta^{-\frac{n}{2p}} t^{\beta+{\frac{1}{2}}} \big( \|u\|_{T^p_{\beta+1}} + \|\partial_t u\|_{T^p_\beta} \big), \]
    where $B$ describes all balls in $\bR^n$ with $r(B)=\delta^{1/2}$.
\end{proof}

We may finish the proof of Theorem \ref{thm:ic_sol}.

\begin{proof}[Proof of Theorem \ref{thm:ic_sol} \eqref{item:ic_boundary_whitney} and \eqref{item:ic_boundary_LpEp}]
    Statement \eqref{item:ic_boundary_whitney} is clear by Lemma \ref{lemma:whitney-boundary_tent}. For \eqref{item:ic_boundary_LpEp}, first note that the same deduction as for  \eqref{e:unique_v(t)_Ep_p>2} implies
    \begin{equation}
        \|u(t)\|_{E^p_{t/16}} \lesssim t^{\beta+\frac{1}{2}} ( \|u\|_{T^p_{\beta+1}} + \|f\|_{T^p_\beta}).
        \label{e:bb_u(t)Ept/16}
    \end{equation}
    Then the discussion is divided into two cases. \\
    
    \paragraph{Case 1: $p_L(\beta)<p \leq 2$} 
    Observe that $E^p_{t/16}$ embeds into $L^p(\bR^n)$ by H\"older's inequality, so $u(t)$ tends to 0 in $L^p(\bR^n)$ as $t \to 0$. \\

    \paragraph{Case 2: $2<p \leq \infty$}
    Fix $\delta>0$ and $t \in (0,\delta)$. Applying \eqref{e:Epdelta_change} and \eqref{e:bb_u(t)Ept/16} yields 
    \[ \|u(t)\|_{E^p_\delta} \lesssim \|u(t)\|_{E^p_{t/16}} \lesssim t^{\beta+\frac{1}{2}} ( \|u\|_{T^p_{\beta+1}} + \|f\|_{T^p_\beta} ). \]
    Moreover, \eqref{e:Einfty_est_sol} also holds for $u$ by Corollary \ref{cor:Einfty_est_sol} as $u \in T^p_{\beta+1}$ (by \eqref{item:ic_regularity}) and $\partial_t u \in T^p_\beta$ (by \eqref{item:ic_max_reg}). Interpolation of slice spaces hence implies $u(t)$ tends to 0 as $t \to 0$ in $E^q_\delta$ for any $q \in [p,\infty]$. 
    
    This completes the proof.
\end{proof}
%%%%% Further remarks
\section{Further remarks}
\label{sec:remarks}

In fact, our results can be generalized in several directions. Let us briefly mention two possible results.

(1) For any $\beta \in \bR$, $T\in (0,\infty]$, and $u \in L^2_{\loc}((0,T)\times \bR^{n})$, the \textit{(local) weighted Kenig--Pipher non-tangential maximal function} $\cN_{\beta,T}$ is defined by
\[ \cN_{\beta,T}(u)(x):=\sup_{0<t<T} \left( \fint_{t/2}^t \fint_{B(x,t^{1/2})} |s^{-\beta} u(s,y)|^2 dsdy \right)^{1/2}. \]
For any $p \in (0,\infty]$, the \textit{(local) weighted Kenig--Pipher space} $X^p_{\beta,T}$ consists of measurable functions $u \in L^2_{\loc}((0,T)\times \bR^{n})$ for which
\[ \|u\|_{X^p_{\beta,T}} := \|\cN_{\beta,T}(u)\|_p < \infty. \]
Simple calculation shows that for any $p \in (0,\infty]$, $\beta \in \bR$, and $T \in (0,\infty]$, $T^p_{\beta+\frac{1}{2}} \hookrightarrow X^p_{\beta,\infty} \hookrightarrow X^p_{\beta,T}$. Thus, when $\beta>-1/2$ and $p_L(\beta)<p \le \infty$, for any $f \in T^p_\beta$, the global weak solution $u$ given by Theorem \ref{thm:ic_sol} lies in $X^p_{\beta+\frac{1}{2},T}$. In fact, only knowing that $\I_{(0,T]}f\in T^p_\beta$ suffices to conclude that $u\in X^p_{\beta+\frac{1}{2},T}$.   Also, there is uniqueness in $X^p_{\beta+\frac{1}{2}, T}$.

\begin{prop}
    Let $\beta>-1/2$, $T\in (0,\infty]$, and $p_L(\beta)<p \le \infty$. Then any weak solution $u \in X^p_{\beta+\frac{1}{2}, T}$ to $\partial_t u - \Div (A\nabla u)=0$ vanishes.
\end{prop}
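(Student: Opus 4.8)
The plan is to re-run the uniqueness argument of Theorem \ref{thm:unique_ic}, exploiting the observation that the hypothesis $u\in X^p_{\beta+\frac12,T}$ encodes \emph{exactly} the Whitney averages of $u$ that were controlled there through the $T^p_{\beta+1}$-norm. Write $v:=u$, a weak solution to the homogeneous equation with $v\in L^2_{\loc}((0,T);H^1_{\loc}(\bR^n))$. First I would produce the slice estimate replacing \eqref{e:unique_v(t)_Ep_p>2}: for $0<t<T$, Caccioppoli (Lemma \ref{lemma:Caccioppoli}) with null source term on $(t/2,t)\times B(x,\sqrt t/2)$ gives, for a.e.\ $x$,
\[ \fint_{B(x,\sqrt t/4)}|v(t,y)|^2\,dy \lesssim \fint_{t/2}^{t}\fint_{B(x,\sqrt t/2)}|v(s,y)|^2\,ds\,dy \lesssim t^{2\beta+1}\,\cN_{\beta+\frac12,T}(v)(x)^2, \]
the last inequality being immediate from the definition of the non-tangential maximal function (using $B(x,\sqrt t/2)\subset B(x,\sqrt t)$ and $s^{-(2\beta+1)}\eqsim t^{-(2\beta+1)}$ on $(t/2,t)$). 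Taking $L^p(\bR^n)$-norms yields $\|v(t)\|_{E^p_{t/16}}\lesssim t^{\beta+\frac12}\|v\|_{X^p_{\beta+\frac12,T}}$ for $0<t<T$, the exact analogue of \eqref{e:bb_u(t)Ept/16}.

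Next I would verify that the homotopy identity \eqref{e:ic_homotopy} is available, i.e.\ check the weighted integrability hypothesis of \cite[Theorem 5.1]{auscher-monniaux-portal2019existence} on the strip. Fixing $0<a<b<T$ and covering $B(x,b^{1/2})$ by a bounded number (depending on $a,b$) of balls $B(x+z_k,\sqrt a/4)$, the slice estimate integrated in $s$ gives $\big(\int_a^b\int_{B(x,b^{1/2})}|v|^2\big)^{1/2}\lesssim_{a,b}\big(\sum_k\cN_{\beta+\frac12,T}(v)(x+z_k)^2\big)^{1/2}$, which lies in $L^p$ by subadditivity of $t\mapsto t^{p/2}$ (if $p\le2$) or Minkowski's inequality (if $p\ge2$); hence it is integrable against any Gaussian $e^{-\gamma|x|^2}$. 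Thus for $0<s<t<T$ and $h\in C^\infty_\rmc(\bR^n)$,
\[ \int_{\bR^n} v(t,x)\,\ovh(x)\,dx = \int_{\bR^n} v(s,x)\,\overline{\big((e^{-(t-s)L})^\ast h\big)}(x)\,dx. \]

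Finally I would let $s\to0$ exactly along the three cases of Theorem \ref{thm:unique_ic}. For $2\le p<\infty$, the slice estimate and \eqref{e:Epdelta_change} give $v(s)\to0$ in $E^p_\delta$ while $(e^{-(t-s)L})^\ast h\to(e^{-tL})^\ast h$ in $E^{p'}_\delta$ by \cite[Lemma 4.7]{auscher-monniaux-portal2019existence}, so Lemma \ref{lemma:pairing_cv} forces the right-hand side to $0$; the case $p=\infty$ is identical with $E^\infty_\delta,E^1_\delta$. For $p_L(\beta)<p<2$, the embedding $E^p_{s/16}\hookrightarrow L^p$ gives $\|v(s)\|_p\lesssim s^{\beta+\frac12}\|v\|_{X^p_{\beta+\frac12,T}}$ (the analogue of \eqref{e:v(t)Lp}), and a second use of Caccioppoli on $B(x,R)$ combined with a covering argument yields $\|v(s)\|_2\lesssim s^{\beta+\frac12-\frac n2[p,2]}\|v\|_{X^p_{\beta+\frac12,T}}$ (the analogue of \eqref{e:v(t)L2}); interpolating as in Theorem \ref{thm:unique_ic} gives $v(s)\to0$ in some $L^q$ with $q\in(p,2)\cap(p_-(L),2)$, matched by $(e^{-(t-s)L})^\ast h\to(e^{-tL})^\ast h$ in $L^{q'}$ via \cite[Proposition 3.15]{Auscher2007Memoire}. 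In each case $v(t)=0$ in $\scrD'(\bR^n)$ for every $t\in(0,T)$, whence $v\equiv0$.

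The main obstacle will be the $L^2$ slice bound for $p<2$: the tent-space embedding Lemma \ref{lemma:embed_tent_L2beta(a,b)_p<2} used in Theorem \ref{thm:unique_ic} has no direct $X^p$-counterpart, so one must control $\int_{t/2}^t\int_{B(x,2R)}|v|^2$ by a sum $\sum_j\cN_{\beta+\frac12,T}(v)(x_j)^2$ over a $\sqrt t$-net (whose cardinality grows as $t\to0$), then invoke the sequence embedding $\ell^p\hookrightarrow\ell^2$ together with a change of aperture for $\cN$ to bound this sum by $t^{-n/p}\|v\|_{X^p_{\beta+\frac12,T}}^2$, and finally pass to $R\to\infty$ in Caccioppoli. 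Once this localization and the $L^p$-bound on the weight factor for the homotopy identity are in place, the remainder is a transcription of the proof of Theorem \ref{thm:unique_ic}.
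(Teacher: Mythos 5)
Your overall strategy is exactly the paper's: the published proof is a one-line instruction to transcribe the argument of Theorem \ref{thm:unique_ic}, with the single observation that $\|\I_{(t/2,t)}u\|_{T^p_{\beta+1}} \eqsim \|\I_{(t/2,t)}u\|_{X^p_{\beta+\frac12,T}}$, which lets one apply Lemma \ref{lemma:embed_tent_L2beta(a,b)_p<2} (and Lemma \ref{lemma:homotopy-condition}) to the restrictions $\I_{(t/2,t)}u$ and recover \eqref{e:v(t)L2} and the Gaussian-weighted hypothesis of the homotopy identity without any new covering arguments. You instead re-derive the needed slice bounds directly from Caccioppoli and the definition of $\cN_{\beta+\frac12,T}$, handling the spatial globalization by nets and $\ell^p\hookrightarrow\ell^2$. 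That works and is a legitimate re-proof of the strip-localized versions of those lemmas, but it is more labor than necessary: your ``main obstacle'' paragraph is essentially a proof of Lemma \ref{lemma:embed_tent_L2beta(a,b)_p<2} applied to $\I_{(t/2,t)}v$, which the norm equivalence above gives you for free.

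There is one step that does not follow as written. For the hypothesis of the homotopy identity you conclude that the weight factor $F(x)=\big(\int_a^b\int_{B(x,b^{1/2})}|v|^2\big)^{1/2}$ ``lies in $L^p$ \ldots hence it is integrable against any Gaussian.'' This implication is false when $p<1$ (an $L^p$ function with $p<1$ need not be integrable against $e^{-\gamma|x|^2}$; take tall spikes on small balls going to infinity), and $p<1$ does occur in the admissible range $p_L(\beta)<p\le\infty$ once $\beta$ is large. The paper avoids this by using, for $p\le 2$, the embedding into $L^2((a,b)\times\bR^n)$, which makes $F$ \emph{bounded} and hence trivially Gaussian-integrable. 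The fix is already contained in your own outline: run your $L^2$ slice bound $\|v(s)\|_2\lesssim s^{\beta+\frac12-\frac n2[p,2]}\|v\|_{X^p_{\beta+\frac12,T}}$ (or rather the intermediate estimate $\int_{t/2}^t\|v(s)\|_2^2\,ds<\infty$ obtained from the net argument, before invoking Caccioppoli) for the dyadic strips covering $(a,b)$; this gives $v\in L^2((a,b)\times\bR^n)$, so $F\in L^\infty$ and the weight hypothesis holds for all $p$. With that repair, and granting the standard change-of-aperture inequality for $\cN$ that your net argument implicitly uses, the rest of your transcription of the three limiting cases is correct.
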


It suffices to follow our argument of Theorem \ref{thm:unique_ic},  with little modifications. The only noticeable one is that we use (twice) Lemma \ref{lemma:embed_tent_L2beta(a,b)_p<2} on strips $(\frac{t}{2},t) \times \bR^n$ for $0<t<T$ with the remark that 
\[ \|\I_{(\frac{t}{2},t)} u\|_{T^p_{\beta+1}} \eqsim \|\I_{(\frac{t}{2},t)} u\|_{X^p_{\beta+\frac{1}{2},T}}. \]

(2) Observe that Corollary \ref{cor:sio_tent_p>2} and \eqref{e:L1-1/2_sio} imply $\cL_1$ extends to a bounded operator from $T^{\infty}_{\beta,([p,1])}$ to $T^{\infty}_{\beta+1,([p,1])}$ for any $0<p \le 1$. In fact, we can also establish well-posedness within this range using the same method as what we did for $f \in T^\infty_\beta$.
\begin{prop}
    Let $\beta>-1/2$ and $0<p \le 1$. Then for any $f \in T^\infty_{\beta;([p,1])}$, there exists a unique global weak solution $u \in T^\infty_{\beta+1;([p,1])}$ to \eqref{e:def_ic} with source term $f$.
\end{prop}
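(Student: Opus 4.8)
The plan is to follow verbatim the treatment of the endpoint $f\in T^\infty_\beta$ (which is the case $[p,1]=0$), replacing $T^\infty_\beta$ everywhere by the Morrey-type space $T^\infty_{\beta;([p,1])}$, and to check that the only new feature, the nonnegative exponent $[p,1]=\frac1p-1\ge 0$, enters every local-to-global passage through harmless positive powers of the scale.

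\textbf{Existence.} First I would record the boundedness of the building-block operators. By \eqref{e:L1-1/2_sio} we have $\cL_1\in\sio^{1+}_{2,q,\infty}$ and $\cL_{1/2}\in\sio^{1/2+}_{2,q,\infty}$, and since $p_+(L)\ge q_+(L)>2$ we may select $q\ge 2$ inside the admissible ranges. As $M=\infty$ forces $p_M=0$, Corollary \ref{cor:sio_tent_p>2} then yields bounded extensions $\cL_1:T^\infty_{\beta;([p,1])}\to T^\infty_{\beta+1;([p,1])}$ and $\cL_{1/2}:T^\infty_{\beta;([p,1])}\to T^\infty_{\beta+\frac12;([p,1])}$ for every $0<p\le 1$, the $L^2$-boundedness hypotheses being automatic for $\kappa>0$ by Lemma \ref{lem:ext_L2_kappa>0}. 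I would then set $u:=\cL_1(f)$ and argue exactly as in Case 2 of Lemma \ref{lemma:ext_L1-1/2_tent}: Proposition \ref{prop:sio_duality} gives $\cL_1^\ast\in\sio^{1-}_{2,q',\infty}$ and $\cL_{1/2}^\ast\in\sio^{1/2-}_{2,q',\infty}$, so Proposition \ref{prop:sio-_tent_p<2} (with $q'\le 2$, $p_M=0$, and $-\beta<1/2$) makes them bounded $T^p_{-\beta-1}\to T^p_{-\beta}$ and $T^p_{-\beta-\frac12}\to T^p_{-\beta}$. Combined with the weak${}^\ast$ density of $L^2_\rmc(\bR^{1+n}_+)$ in $T^\infty_{\beta;([p,1])}$ as dual of $T^p_{-\beta}$ (Lemma \ref{lemma:L2c-Tinfty_weak*dense}), and the fact that test-function derivatives lie in the preduals $T^p_{-\beta},T^p_{-\beta-\frac12},T^p_{-\beta-1}$ by Lemma \ref{lem:i-c_embed_test_tent}, the weak${}^\ast$-limit argument produces $\nabla u=\cL_{1/2}(f)$ in $\scrD'(\bR^{1+n}_+)$ and the weak formulation \eqref{e:L1f_weak_sol}, so $u$ is a global weak solution in $T^\infty_{\beta+1;([p,1])}$.

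\textbf{Uniqueness.} Let $v\in T^\infty_{\beta+1;([p,1])}$ solve \eqref{e:def_ic} with null source; the goal is $v\equiv 0$. The first step is to secure the homotopy identity \eqref{e:ic_homotopy}, which by \cite[Theorem 5.1]{auscher-monniaux-portal2019existence} holds once the Gaussian-weighted local $L^2$ integrability of $v$ is verified. Here this is immediate and in fact stronger than needed: estimating $a\le t\le b$ against the $([p,1])$-norm on the single ball $B=B(x,\sqrt b)$ of fixed radius shows that $x\mapsto(\int_a^b\int_{B(x,\sqrt b)}|v|^2)^{1/2}$ is bounded (the normalization $|B|^{[p,1]}$ is a constant in $x$), hence lies in $L^\infty(\bR^n)$.

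The decisive estimate is the boundary decay of $v(s)$. Applying the Caccioppoli inequality (Lemma \ref{lemma:Caccioppoli}) on $(s/2,s)\times 2B$ with $B=B(x,\sqrt s/4)$ and null source, then controlling the resulting local $L^2$-average by the $T^\infty_{\beta+1;([p,1])}$-norm on the ball of radius $\sqrt s/2$, I expect
\[
\|v(s)\|_{E^\infty_{s/16}}\lesssim s^{\beta+\frac12+\frac{n}{2}[p,1]}\,\|v\|_{T^\infty_{\beta+1;([p,1])}} .
\]
Since $\beta+\frac12>0$ and $[p,1]\ge 0$, the exponent is positive. The passage to a fixed scale $\delta\ge s/16$ is free for $E^\infty$: in \eqref{e:Epdelta_change} the factor is $\max\{1,((s/16)/\delta)^{n/4}\}=1$, so $\|v(s)\|_{E^\infty_\delta}\to 0$ as $s\to 0$. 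On the other side, $(e^{-(t-s)L})^\ast h\to(e^{-tL})^\ast h$ in $E^1_\delta$ by \cite[Lemma 4.7]{auscher-monniaux-portal2019existence}. Feeding both into the right-hand side of \eqref{e:ic_homotopy} and invoking Lemma \ref{lemma:pairing_cv} with $X=E^1_\delta$ (so that $v(s)\to 0$ weak${}^\ast$ in $X^\ast=E^\infty_\delta$ while $g_s\to g_0$ in $X$) forces the pairing to $0$, whence $v(t)=0$ in $\scrD'(\bR^n)$ for every $t>0$, i.e.\ $v\equiv 0$.

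The only genuinely new point compared with the $T^\infty_\beta$ case is the Morrey normalization $|B|^{[p,1]}$, and I expect no obstruction: it enters the homotopy condition and the Caccioppoli-based decay only through the extra factor $\frac n2[p,1]\ge 0$, which strengthens rather than weakens the decay. I would therefore regard the careful bookkeeping of this scaling as the main, but entirely routine, point to be checked.
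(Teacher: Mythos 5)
Your proposal is correct and follows exactly the route the paper indicates: boundedness of $\cL_1$ and $\cL_{1/2}$ on the $T^\infty_{\cdot,([p,1])}$ scale via \eqref{e:L1-1/2_sio} and Corollary \ref{cor:sio_tent_p>2}, the weak${}^\ast$-density argument against the predual $T^p_{-\beta}$ for the weak formulation, and the homotopy identity plus Caccioppoli decay $\|v(s)\|_{E^\infty_{s/16}}\lesssim s^{\beta+\frac12+\frac n2[p,1]}\|v\|$ for uniqueness, with the extra exponent $\frac n2[p,1]\ge 0$ only helping. The single bookkeeping point to tidy is that bounding $\fint_{s/2}^s\fint_{B(x,\sqrt s/2)}|v|^2$ by the $([p,1])$-norm requires enlarging to a ball of radius at least $\sqrt s$ so that $(s/2,s)\subset(0,r(B)^2)$, which costs only a harmless constant.
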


Similarly, all the properties of Theorem \ref{thm:ic_sol} can be accordingly established. We only mention that \eqref{item:ic_boundary_LpEp} turns out to be that:
\begin{enumerate}
    \item[(e')] $u(t)$ converges to 0 as $t \to 0$ in $E^\infty_\delta$ for any $\delta>0$.
\end{enumerate}

\bibliographystyle{alpha}
\bibliography{sample}

\begin{thebibliography}{HvNVW16}

\bibitem[AA11]{Auscher-Axelsson2011_MR_L2beta}
P.~Auscher and A.~Axelsson.
\newblock Remarks on maximal regularity.
\newblock In {\em Parabolic Problems: The Herbert Amann Festschrift}, volume~80
  of {\em Progress in Nonlinear Differential Equations and Their Applications},
  pages 45--55. Springer Basel, Basel, 2011.

\bibitem[AF17]{Auscher-Frey2017NS}
P.~Auscher and D.~Frey.
\newblock On the well-posedness of parabolic equations of {N}avier–{S}tokes
  type with $\mathit{BMO}^{-1}$ data.
\newblock {\em Journal of the Institute of Mathematics of Jussieu},
  16(5):947--985, 2017.

\bibitem[AH24]{Auscher-Hou2024_HCL}
P.~Auscher and H.~Hou.
\newblock On well-posedness for parabolic {C}auchy problems of {L}ions type
  with rough initial data, 2024.
\newblock \href{https://doi.org/10.48550/arXiv.2406.15775}{arXiv:2406.15775}.

\bibitem[AHM12]{auscher-hofmann-martell2010SquareFunctions}
P.~Auscher, S.~Hofmann, and J.M. Martell.
\newblock Vertical versus conical square functions.
\newblock {\em Transactions of the American Mathematical Society},
  364(10):5469--5489, 2012.

\bibitem[AKMP12]{auscher12maxreg}
P.~Auscher, C.~Kriegler, S.~Monniaux, and P.~Portal.
\newblock Singular integral operator on tent spaces.
\newblock {\em Journal of Evolution Equations}, 12:741--765, 2012.

\bibitem[AM19]{Auscher-Mourgoglou2019Slice}
P.~Auscher and M.~Mourgoglou.
\newblock Representation and uniqueness for boundary value elliptic problems
  via first order systems.
\newblock {\em Revista Matem{\'a}tica Iberoamericana}, 35(1):241--315, 2019.

\bibitem[Ame18]{amenta18weighted}
A.~Amenta.
\newblock Interpolation and embeddings of weighted tent spaces.
\newblock {\em Journal of Fourier Analysis and Applications}, 24:108--140,
  2018.

\bibitem[AMP12]{Auscher-Monniaux-Portal2012_MR}
P.~Auscher, S.~Monniaux, and P.~Portal.
\newblock The maximal regularity operator on tent spaces.
\newblock {\em Communications on Pure and Applied Analysis}, 11(6):2213--2219,
  2012.

\bibitem[AMP19]{auscher-monniaux-portal2019existence}
P.~Auscher, S.~Monniaux, and P.~Portal.
\newblock On existence and uniqueness for non-autonomous parabolic {C}auchy
  problems with rough coefficients.
\newblock {\em Annali Scoula Normale Superiore -- Classe di Scienze}, pages
  387--471, 2019.

\bibitem[AMR08]{Auscher-McIntosh-Russ2008_HardyRieMfd}
P.~Auscher, A.~McIntosh, and E.~Russ.
\newblock {H}ardy spaces of differential forms on {R}iemannian manifolds.
\newblock {\em Journal of Geometric Analysis}, 18:192--248, 2008.

\bibitem[AP23]{Auscher-Portal2023_Lions}
P.~Auscher and P.~Portal.
\newblock Stochastic and deterministic parabolic equations with bounded
  measurable coefficients in space and time: well-posedness and maximal
  regularity, 2023.
\newblock \href{https://doi.org/10.48550/arXiv.2310.06460}{arXiv:2310.06460}.

\bibitem[Aus07]{Auscher2007Memoire}
P.~Auscher.
\newblock {\em On necessary and sufficient conditions for {$L^p$}-estimates of
  {R}iesz transforms associated to elliptic operators on {$\bR^n$} and related
  estimates}.
\newblock Memoirs of the American Mathematical Society, Volume 186, Number 871.
  American Mathematical Society, Providence, Rhode Island, 2007.

\bibitem[Aus11]{Auscher2011Change_angle}
P.~Auscher.
\newblock Change of angle in tent spaces.
\newblock {\em Comptes Rendus Math{\'e}matique}, 349(5):297--301, 2011.

\bibitem[AvNP14]{Auscher-vanNeerven-Portal2014_StochasticMR}
P.~Auscher, J.~van Neerven, and P.~Portal.
\newblock Conical stochastic maximal {$L^p$}-regularity for $1 \leq p <
  \infty$.
\newblock {\em Mathematische Annalen}, 359:863--889, 2014.

\bibitem[BK03]{Blunck-Kunstmann2003_CZNonIntegral}
S.~Blunck and P.C. Kunstmann.
\newblock {Calder\'on--Zygmund theory for non-integral operators and the
  H-infinity functional calculus}.
\newblock {\em Revista Matem{\'a}tica Iberoamericana}, 19(3):919--942, 2003.

\bibitem[CMS85]{Coifman-Meyer-Stein85TentSpaces}
R.R. Coifman, Y.~Meyer, and E.M. Stein.
\newblock Some new function spaces and their applications to harmonic analysis.
\newblock {\em Journal of Functional Analysis}, 62(2):304--335, 1985.

\bibitem[CUMP11]{CruzUribe-Martell-P2016BanachSpaces}
D.V. Cruz-Uribe, J.M. Martell, and C.~P{\'e}rez.
\newblock {\em Weights, Extrapolation and the Theory of {Rubio de Francia}},
  volume 215 of {\em Operator Theory: Advances and Applications}.
\newblock Birkh{\"a}user Basel, 2011.

\bibitem[dS64]{deSimon1964MRO_L2}
L.~de~Simon.
\newblock Un'applicazione della teoria degli integrali singolari allo studio
  delle equazioni differenziali lineari astratte del primo ordine.
\newblock {\em Rendiconti del Seminario Matematico della Universit\`a di
  Padova}, 34:205--223, 1964.

\bibitem[DV23]{Danchin-Vasilyev2023_Inhomogeneous-NS-tent}
R.~Danchin and I.~Vasilyev.
\newblock Density-dependent incompressible {Navier--Stokes} equations in
  critical tent spaces, 2023.
\newblock {\href{https://doi.org/10.48550/arXiv.2305.09027}{arXiv:2305.09027}}.

\bibitem[HTV91]{Harboure-Torrea-Viviani1991VectorValued}
E.~Harboure, J.L. Torrea, and B.E. Viviani.
\newblock A vector-valued approach to tent spaces.
\newblock {\em Journal d'Analyse Math{\'e}matique}, 56:125--140, 1991.

\bibitem[HvNVW16]{Hytonen-NVW2016BanachSpaces}
T.~Hyt{\"o}nen, J.~van Neerven, M.~Veraar, and L.~Weis.
\newblock {\em Analysis in {B}anach Spaces}, volume~63 of {\em Ergebnisse der
  Mathematik und ihrer Grenzgebiete. 3. Folge / A Series of Modern Surveys in
  Mathematics}.
\newblock Springer, 2016.

\bibitem[KT01]{Koch-Tataru2001NS}
H.~Koch and D.~Tataru.
\newblock {Well-posedness for the Navier–Stokes Equations}.
\newblock {\em Advances in Mathematics}, 157(1):22--35, 2001.

\bibitem[PV19]{Portal-Veraar2019_MR}
P.~Portal and M.~Veraar.
\newblock Stochastic maximal regularity for rough time-dependent problems.
\newblock {\em Stochastics and Partial Differential Equations: Analysis and
  Computations}, 7:541--597, 2019.

\bibitem[Wei01]{Weis2001LpMaxReg}
L.~Weis.
\newblock Operator-valued {F}ourier multiplier theorems and maximal
  {$L_p$}-regularity.
\newblock {\em Mathematische Annalen}, 319:735--758, 2001.

\bibitem[Zat20]{Zaton2020}
W.~Zato{\'n}.
\newblock Tent space well-posedness for parabolic {C}auchy problems with rough
  coefficients.
\newblock {\em Journal of Differential Equations}, 269(12):11086--11164, 2020.

\end{thebibliography}

\end{document}